\documentclass[11pt]{amsart}
\usepackage{mathrsfs,latexsym,amsfonts,amssymb}
\setcounter{page}{1} \setlength{\textwidth}{14.6cm}
\setlength{\textheight}{22.5cm} \setlength{\evensidemargin}{0.8cm}
\setlength{\oddsidemargin}{0.8cm} \setlength{\topmargin}{0.8cm}

\newtheorem{theorem}{Theorem}[section]
\newtheorem{lemma}[theorem]{Lemma}
\newtheorem{corollary}[theorem]{Corollary}
\newtheorem{question}[theorem]{Question}
\newtheorem{example}[theorem]{Example}
\theoremstyle{definition}
\newtheorem{definition}[theorem]{Definition}
\newtheorem{proposition}[theorem]{Proposition}
\theoremstyle{remark}
\newtheorem{remark}[theorem]{Remark}

\begin{document}
\title[Some topological properties of topological rough groups]
{Some topological properties of topological rough groups}

\author{Fucai Lin*}
\address{(Fucai Lin): School of mathematics and statistics,
Minnan Normal University, Zhangzhou 363000, P. R. China}
\email{linfucai2008@aliyun.com; linfucai@mnnu.edu.cn}

\author{Qianqian Sun}
\address{(Qianqian Sun): School of mathematics and statistics,
Minnan Normal University, Zhangzhou 363000, P. R. China}

\author{Yujin Lin}
\address{(Yujin Lin): School of mathematics and statistics,
Minnan Normal University, Zhangzhou 363000, P. R. China}

\author{Jinjin Li}
\address{(Jinjin Li): School of mathematics and statistics,
Minnan Normal University, Zhangzhou 363000, P. R. China}
\email{jinjinli@mnnu.edu.cn}

\thanks{The first author is supported by the NSFC (No. 11571158), the Natural Science Foundation of Fujian Province (No. 2017J01405) of China, the Program for New Century Excellent Talents in Fujian Province University, the Institute of Meteorological Big Data-Digital Fujian and Fujian Key Laboratory of Data Science and Statistics. The second author is supported by the Young and middle-aged project in Fujian Province (No. JAT190397). The fourth author is supported by the NSFC (No. 11871259).}

\thanks{*corresponding author}

\thanks{{\bf This paper is dedicated to professor Shou Lin on the occasion of his 60th birthday.}}

\keywords{Rough group; topological rough group; topological rough subgroup; strongly topological rough group; connected; extremally disconnected; separation axiom; rough homomorphism; rough kernel; topological group; lower approximation; upper
approximation.}
\subjclass[2020]{Primary: 22A05, 54A05. Secondary: 03E25.}

\begin{abstract}
Let $(U, R)$ be an approximation space with $U$ being non-empty set and $R$ being an equivalence relation on $U$, and let $\overline{G}$ and $\underline{G}$ be the upper approximation and the lower approximation of subset $G$ of $U$. A topological rough group $G$ is a rough group $G=(\underline{G}, \overline{G})$ endowed with a topology, which is induced from the upper approximation space $\overline{G}$, such that the product mapping $f: G\times G\rightarrow \overline{G}$ and the inverse mapping are continuous. In the class of topological rough groups, the relations of some separation axioms are obtained, some basic properties of the neighborhoods of the rough identity element and topological rough subgroups are investigated. In particular, some examples of topological rough groups are provided to clarify some facts about topological rough groups. Moreover, the version of open mapping theorem in the class of topological rough group is obtained. Further, some interesting open questions are posed.
\end{abstract}

\maketitle
\section{Introduction}
Rough set, as a mathematical theory for dealing with imprecise, uncertain and incomplete data, was first introduced by Pawlak \cite{P1982} in 1982. Its main idea is to use the known incomplete information or knowledge to approximately describe the concept of imprecise or uncertain, or to deal with ambiguous phenomena and problems according to the results of observation and measurement. After more than 30 years of research, the theory of rough set has been continuously improved and widely expanded in applications, see \cite{WM2019}. At present, it has been successfully applied in machine learning and knowledge discovery, information system analysis, data mining, decision support system, fault detection, process control, pattern recognition, etc.

In the past 30 years, the rough sets has been combined with some mathematical theories such
as topology and algebra \cite{AAAO2019,BIO2016,Bi1994,BO1995,DC2018,I1987,KM2006,LXL2012,PO1998,RK2002,S2011,WM2019,V20089,ZLL2019,ZLL2016}. The algebraic structures of rough sets are interesting topics, which have been
studied by many authors, for example, \cite{BO1995,I1987,PO1998}. In
1994, Biswas and Nanda \cite{Bi1994} introduced the notion of rough group and rough subgroups, which depends
on the upper approximation and does not depend on the lower approximation. Then many authors improved
the definitions of rough group and rough subgroup, and generalized the definitions of rough group and rough subgroup (such as, rough ideal, rough semigroup, etc) and prove some new properties, see \cite{AAAO2019,BO2016,CMN2007,Da2004,K1997,KW1996,LZ2014,WC2010,WC2013}. However, the definition of rough group has some defects, which lead to many gaps in the proofs of some papers. In 2011, Wu and Huang in \cite{WH2011} revised the definition of rough group that resolve the defects of the definition of rough group.

In \cite{BIO2016}, Bagirmaz et al. introduced the concept of topological rough group, and extended
the notion of a topological group to include algebraic structures of rough groups. In addition,
they presented properties and some examples. However, they use the concept of rough groups which was introduced in \cite{Bi1994}, which leads to some defects. In this paper, we study the topological rough groups base on the concept of rough groups which has been revised in \cite{WH2011}. We mainly discuss some topological properties of topological rough groups. In particular, we investigate the separation axioms, the neighborhoods of the rough identity element and the rough subgroups of topological rough groups. In Section 2, we introduce the necessary notation and terminologies which are
used for the rest of the paper. We also make some remarks about topological rough groups. In Section 3, we investigate the separation axioms for topological rough groups. We mainly provide some examples and prove some theorems to illustrate the relations of some separation axioms among topological rough groups, such as, $T_{0}$, $T_{1}$, $T_{2}$, etc. Section 4 is devoted to the study of some basic properties of topological rough groups. We mainly
discuss the neighborhoods of rough identity element of topological rough groups. In particular, we study the extremally disconnected topological rough groups. In Section 5, we investigate some basic properties of topological rough groups, and discuss that when the closure of a topological rough subgroup of a topological rough group is a topological rough subgroup. In Section 6, we redefine the concept of rough homomorphism, and prove the version of open mapping theorem in the class of topological rough groups.

\maketitle
\section{Preliminaries}
In this section, we introduce the necessary notation and terminologies. First of all, let $\mathbb{N}$, $\omega$, $\mathbb{Z}$ and $\mathbb{R}$ denote the sets of all positive
  integers, non-negative integers, all integers and all real numbers, respectively. Let $X$ be a topological space and let $A$ be a subset of $X$.
The \emph{closure} and \emph{interior} of $A$ are the smallest closed set containing $A$ and the largest open set contained in $A$ respectively, and are denoted by $A^{c}$ and $\mbox{Int}(A)$ respectively. For undefined
  notation and terminologies, the reader may refer to \cite{AA},
  \cite{E1989} and \cite{WM2019}.

First, we give the definition of rough groups introduced by Wu and Huang in 2011.

Let $(U, R)$ be an approximation space such that $U$ is any non-empty set and $R$ is an equivalence relation on $U$. We denote the equivalence class of object $x$ in $R$ by $[x]_{R}$. For a subset $X\subset U$, $$\overline{X}=\{x\in U: [x]_{R}\cap X\neq\emptyset\}$$ and $$\underline{X}=\{x\in U: [x]_{R}\subset X\}.$$are called {\it upper approximation} and {\it lower approximation} of $X$ in $(U, R)$, respectively.

Assume that ($\ast$) is a binary operation defined on $U$. We will use $xy$ instead of $x\ast y$ for all
composition of elements $x, y\in U$ as well as for composition of subsets $XY$, where $X, Y\subset U$.

\begin{definition}\cite{WH2011}
Let $S=(U, R)$ be an approximation space, and let ($\ast$) be a binary operation defined on $U$. A subset
$G$ of universe $U$ is called a {\it rough group} if the following properties are satisfied:
\begin{enumerate}
\smallskip
\item $\forall x, y\in G, xy\in \overline{G}$;
\smallskip\item $\forall x, y, z\in \overline{G}, (xy)z=x(yz)$;
\smallskip\item $\exists e\in \overline{G}$ such that $\forall x\in \overline{G}, ex=xe=x$, where $e$ is called the {\it rough identity element} of rough group $G$;
\smallskip\item $\forall x\in G, \exists y\in G$ such that $xy=yx=e$, where $y$ is called the {\it rough inverse element} of $x$ in $G$, we denote it by $x^{-1}$.
\end{enumerate}
\end{definition}

\begin{definition}\cite{Bi1994}
A non-empty rough subset $H=(\underline{H}, \overline{H})$ of a rough group $G=(\underline{G}, \overline{G})$ is
called a {\it rough subgroup} if it is a rough group itself.
\end{definition}

A necessary and sufficient condition for a subset $H$ of a rough group $G$ to be a rough subgroup is characterized as follows

\begin{theorem}\cite{Bi1994}
A rough subset $H$ is a rough subgroup of the rough group $G$ if the following two
conditions are satisfied:

\smallskip
(1) $\forall x, y\in H$, $xy\in \overline{H}$;

\smallskip
(2) $\forall y\in H$, $y^{-1}\in H$.
\end{theorem}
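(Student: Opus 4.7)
The plan is to verify the four axioms in the rough group definition for $H$, using only that $H\subseteq G$, that $(\underline{G},\overline{G})$ is already a rough group, and the two hypotheses of the theorem. A small but crucial preliminary fact I would record is the monotonicity of the upper approximation: $H\subseteq G$ implies $\overline{H}\subseteq \overline{G}$, which is immediate from the definition $\overline{X}=\{x\in U:[x]_R\cap X\neq\emptyset\}$.

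First I would knock off the two axioms that are handed to us. Axiom (1) of the rough group definition, $xy\in\overline{H}$ for all $x,y\in H$, is exactly hypothesis (1). Axiom (4), the existence of rough inverses inside $H$, is exactly hypothesis (2): for $x\in H$, the element $x^{-1}\in G$ supplied by the rough group structure on $G$ lies in $H$ by (2), and the defining equations $xx^{-1}=x^{-1}x=e$ are inherited from $G$. Axiom (2), associativity on $\overline{H}$, follows because any $x,y,z\in\overline{H}$ also lie in $\overline{G}$ by the monotonicity remark above, and associativity is assumed on $\overline{G}$.

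The only step that requires a small argument is axiom (3), the presence of a rough identity inside $\overline{H}$. Here the natural candidate is the rough identity $e$ of $G$; once we know $e\in\overline{H}$, the identity equations $ex=xe=x$ hold for every $x\in\overline{H}\subseteq\overline{G}$ by the rough group structure on $G$. To show $e\in\overline{H}$, pick any $x\in H$ (a rough subset is assumed non-empty). By hypothesis (2), $x^{-1}\in H$. By hypothesis (1) applied to the pair $x,x^{-1}\in H$, we get $xx^{-1}\in\overline{H}$, and $xx^{-1}=e$ by the rough group axioms for $G$. Hence $e\in\overline{H}$.

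The main (very mild) obstacle is just being careful that the identity and inverses appearing in the axioms for $H$ are the same ones supplied by $G$; there is no separately chosen identity. Once this identification is made, together with the monotonicity $\overline{H}\subseteq\overline{G}$, all four axioms fall out directly, and $H$ is a rough group, hence a rough subgroup of $G$ by Definition 2.2.
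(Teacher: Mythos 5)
Your proof is correct and complete: the reduction of axioms (1), (2), (4) to the hypotheses plus monotonicity of the upper approximation, and the derivation of $e\in\overline{H}$ from $xx^{-1}\in\overline{H}$, are exactly the steps needed, and you rightly flag that the identity and inverses of $H$ must be identified with those of $G$. The paper itself states this result without proof (citing \cite{Bi1994}), so there is no internal argument to compare against; your write-up supplies the standard verification.
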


\begin{definition}\cite{MH2005}
A rough subgroup $N$ of rough group $G$ is called a {\it rough normal subgroup}, if for each $g\in G$, $gN=Ng$.
\end{definition}

\begin{definition}
Let $G$ be a rough group and $A\subset G$. We say that $A$ is {\it symmetric} if $A=A^{-1}$.
\end{definition}

\begin{definition}\cite{BIO2016}
A {\it topological rough group} is a rough group $(G, \ast)$ together with a topology $\tau$ on $\overline{G}$ satisfying the
following two properties:

\smallskip
(1) the mapping $f: G\times G\rightarrow \overline{G}$ defined by $f(x, y)=xy$ is continuous with respect to product topology on $G\times G$
and the topology $\tau_{G}$ on $G$ induced by $\tau$;

\smallskip
(2) the inverse mapping $g: G\rightarrow G$ defined by $g(x)=x^{-1}$ is continuous with respect to the topology $\tau_{G}$ on $G$
induced by $\tau$.
\end{definition}

\begin{proposition}\cite{BIO2016}
Let $G$ be a topological rough group. Then, every rough subgroup $H$ of $G$ with relative topology
is a topological rough subgroup.
\end{proposition}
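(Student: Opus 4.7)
The plan is to equip $\overline{H}$ with the subspace topology $\tau_{\overline{H}}$ inherited from $(\overline{G},\tau)$, and then to give $H$ the subspace topology $\tau_H$ induced from $\tau_{\overline{H}}$ (which, by transitivity of subspaces, coincides with the subspace topology on $H\subset\overline{G}$). With these choices in place, my task reduces to verifying the two conditions of Definition~2.6, namely that the product map $f_H\colon H\times H\to\overline{H}$, $(x,y)\mapsto xy$, and the inversion $g_H\colon H\to H$, $x\mapsto x^{-1}$, are continuous.

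First I would observe that $f_H$ and $g_H$ are well-defined as restrictions of the corresponding structure maps $f$ and $g$ of the ambient topological rough group $G$. Indeed, Theorem~2.3 guarantees that $xy\in\overline{H}$ for all $x,y\in H$ and that $y^{-1}\in H$ for all $y\in H$; hence $f_H=f|_{H\times H}$ genuinely lands in $\overline{H}$ and $g_H=g|_H$ lands in $H$. Continuity then follows from the routine general-topology lemma that if $\varphi\colon X\to Y$ is continuous and $A\subset X$, $B\subset Y$ satisfy $\varphi(A)\subset B$, then the corestricted restriction $\varphi|_A\colon A\to B$ is continuous for the subspace topologies: a relatively open subset of $B$ has the form $B\cap V$ with $V$ open in $Y$, and its preimage under $\varphi|_A$ is $A\cap\varphi^{-1}(V)$, which is relatively open in $A$. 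Applying this to $f$ and $g$ yields the desired continuity of $f_H$ and $g_H$.

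The only small point that has to be checked carefully is that the product topology on $H\times H$ built from $(H,\tau_H)$ agrees with the subspace topology $H\times H$ inherits from $(G\times G,\tau_G\times\tau_G)$; this is the standard fact that the formation of subspaces commutes with finite products, and it is exactly what is needed so that the restriction principle above can be applied to $f$. Once this identification is made, the verification is essentially bookkeeping, and I do not anticipate any substantive obstacle beyond keeping track of which ambient space each topology comes from.
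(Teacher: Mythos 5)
Your argument is correct: the paper states this proposition without proof (it is quoted from the cited reference), and your verification via the standard restriction--corestriction lemma, together with the observation that Theorem~2.3 guarantees $f(H\times H)\subset\overline{H}$ and $g(H)\subset H$ and that subspace topologies commute with finite products, is exactly the intended argument. No gaps.
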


Let $G$  be a topological rough group and let $\pi: G\times G\times G\rightarrow G^{3}$ be defined by $\pi(x, y, z)=xyz$ for any $x, y, z\in G$. Put $G^{3}_{0}=\pi^{-1}(G^{3}\cap \overline{G})$.

\begin{definition}
A {\it strongly topological rough group} is a topological rough group $G$ together with a topology $\tau$ on $\overline{G}$ satisfying the
following property ($\sharp$).

\smallskip
($\sharp$) The mapping $h: G_{0}^{3}\rightarrow \overline{G}$ defined by $h(x, y, z)=xyz$ is continuous with respect to the product topology of $G\times G\times G$ induced on $G_{0}^{3}$ and the topology $\tau_{G}$ on $G$ induced by $\tau$.
\end{definition}

\begin{remark}
 (1) Let $G$ be a topological rough group. Clearly, $G^{2}\subset \overline{G}$. If $G=G^{2}$ then it follows from the definition of topological rough group that $G$ is a topological group.

\smallskip
(2) Let $S=(U, R)$ be a topological group. If $G$ is a symmetric subset $G$ of $S$ such that $xy\in \overline{G}$ for any $x, y\in G$, then $G$ is a topological rough group.

\smallskip
(3) Each strongly topological rough group is a topological rough group. Indeed, there exists a topological rough group which is not a strongly topological rough group, see Example~\ref{e0}.

\smallskip
(4) There exists a strongly topological rough group which is not a topological group, see Example~\ref{e1}.

\smallskip
(5) Throughout this paper, we always assume the upper approximation space $\overline{G}$ with a topology $\tau$ and the topological rough group $G$ with the topology $\tau_{G}$ induced from $(\overline{G}, \tau)$. If $A$ is a subset of $G$, then we always denote the closure of $A$ in $(G, \tau_{G})$ and $(\overline{G}, \tau)$ by $A^{c}$ and $A^{c}_{\tau}$ respectively.

\smallskip
(6) Throughout this paper, the rough identity element of a topological rough group is denoted by $e$.
\end{remark}

\begin{example}\label{e1}
There exists a strongly topological rough group $G$ which is not a topological group
\end{example}

\begin{proof}
Let $U=\{\overline{0}, \overline{1}, \overline{2}\}$ be a set of surplus class with respect to module 3 and let ($\ast$) be the plus of surplus class. A classification of $U$ is $U/R$, where $E_{1}=\{\overline{0}, \overline{1}\}$ and $E_{2}=\{\overline{2}\}$. Let $G=\{\overline{1}, \overline{2}\}.$ Then $\overline{G}=U$. Then it is easy to see that $G$ is a strongly topological rough group which is not a topological group since the rough identity element $\overline{0}$ does not belong to $G$.
\end{proof}

\maketitle
\section{Separation axioms of topological rough groups}
In this section, we discuss the separation axioms of the topological rough groups. First of all, we recall some concepts.

Let $X$ be a topological space. Then

\smallskip
(1) $X$ is called a {\it $T_{0}$-space} \cite{E1989} if for every pair of distinct points $x, y\in X$ there exists an open set containing exactly one of these points.

\smallskip
(2) $X$ is called a {\it $T_{1}$-space} \cite{E1989} if for every pair of distinct points $x, y\in X$ there exists an open set $U\subset X$ such that $x\in U$ and $y\not\in U$.

\smallskip
(3) $X$ is called a {\it $T_{2}$-space} \cite{E1989}, or a {\it Hausdorff space}, if for every pair of distinct points $x, y\in X$ there exist open sets $U_{1}, U_{2}\subset X$ such that $x\in U_{1}, y\in U_{2}$ and $U_{1}\cap U_{2}=\emptyset.$

\smallskip
(4) $X$ is called a {\it $T_{3}$-space} \cite{E1989}, or a {\it regular space}, if $X$ is a $T_{1}$-space and for every $x\in X$ and every closed set $F\subset X$ such that $x\not\in F$ there exist open sets $U_{1}, U_{2}\subset X$ such that $x\in U_{1},F\subset U_{2}$ and $U_{1}\cap U_{2}=\emptyset.$

\smallskip
(5) $X$ is called a {\it $T_{3\frac{1}{2}}$-space} \cite{E1989}, or a {\it Tychonoff space}, or a {\it completely space}, if $X$ is a $T_{1}$-space and for every $x\in X$ and every closed set $F\subset X$ such that $x\not\in F$ there exists a continuous function $f: X\rightarrow I$ such that $f(x)=0$ and $f(y)=1$ for $y\in F$, where $I$ is the unit interval.

Clearly, we have $$T_{3\frac{1}{2}}\rightarrow T_{3}\rightarrow T_{2}\rightarrow T_{1}\rightarrow T_{0},$$and none of the above implications can be reversed.

It is well known that each $T_{0}$-topological group is completely regular. For topological rough group the situation with the separation axioms is worse.
The following example shows that there exists a $T_{0}$-topological rough group which is not a $T_{1}$-topological rough group.

\begin{example}\label{e0}
There exists a $T_{0}$-topological rough group $(G, \tau_{G})$ such that $e\not\in G$ and $\tau_{G}$ is not $T_{1}$.
\end{example}

\begin{proof}
Let $U=\{\overline{0}, \overline{1}, \overline{2}, \overline{3}, \overline{4}, \overline{5}\}$ be the set of surplus class with respect to module 6 and let ($\ast$) be the plus of surplus class. A classification of $U$ is $U/R$, where $$E_{1}=\{\overline{0}, \overline{1}, \overline{2}\}\ \mbox{and}\ E_{2}=\{\overline{3}, \overline{4}, \overline{5}\}.$$ Let $G=\{\overline{2}, \overline{3}, \overline{4}\}$ and $\tau=\{\emptyset, \overline{G}, \{\overline{2}\}, \{\overline{4}\}, \{\overline{2}, \overline{4}\}, \{\overline{2}, \overline{3}, \overline{4}\}\}$. Obviously, $G$ is a rough group. Since $\overline{G}=U$ and $\tau_{G}=\{\emptyset, \{\overline{2}\}, \{\overline{4}\}, \{\overline{2}, \overline{4}\}, \{\overline{2}, \overline{3}, \overline{4}\}\}$, it is easy to see that $(G, \tau_{G})$ is a topological rough group. Clearly, $\tau_{G}$ is $T_{0}$ and not $T_{1}$ since the set $\{\overline{4}\}$ is not closed in $\tau$.
\end{proof}

\begin{remark}
 (1) From the following Theorem~\ref{t0}, we see that the topological rough group $G$ in Example~\ref{e0} is not a strongly topological rough group.

\smallskip
(2) In Example~\ref{e0}, the space $\overline{G}$ is not $T_{0}$. If $\overline{G}$ is a $T_{0}$-space, then the separation axiom $T_{0}$ implies $T_{1}$ in $G$, see Theorem~\ref{t3}.
\end{remark}

\begin{theorem}\label{t3}
If $G$ is a topological rough group with $\overline{G}$ being $T_{0}$, then $G$ is $T_{1}$.
\end{theorem}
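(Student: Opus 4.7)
The plan is to argue by contradiction: supposing $G$ fails $T_1$, I construct continuous maps $G \to \overline{G}$ whose action on a pair of strictly comparable points forces a violation of $T_0$ in $\overline{G}$.

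Since $G$ carries the subspace topology, it inherits $T_0$ from $\overline{G}$. A failure of $T_1$ therefore yields distinct $x, y \in G$ such that, after relabeling, every open neighborhood in $G$ of $y$ contains $x$; equivalently, $y \in \{x\}^c$.

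The heart of the argument is the introduction of two continuous maps $\phi, \psi : G \to \overline{G}$ defined by $\phi(z) = y z^{-1}$ and $\psi(z) = z x^{-1}$. Each is the composition of the continuous inverse $g : G \to G$ with a one-sided multiplication by a fixed element of $G$, obtained by restricting the continuous product $f : G \times G \to \overline{G}$ to $\{y\} \times G$ or to $G \times \{x^{-1}\}$; hence both maps are continuous. One computes $\phi(y) = y y^{-1} = e$, $\phi(x) = y x^{-1}$, $\psi(y) = y x^{-1}$, and $\psi(x) = x x^{-1} = e$. By continuity, $y \in \{x\}^c$ yields $\phi(y) \in \{\phi(x)\}^c_\tau$ and $\psi(y) \in \{\psi(x)\}^c_\tau$, that is,
\[
e \in \{y x^{-1}\}^c_\tau \qquad \text{and} \qquad y x^{-1} \in \{e\}^c_\tau.
\]
Since $\overline{G}$ is $T_0$ and each of $e, y x^{-1}$ lies in the closure of the other, $e = y x^{-1}$. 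Right-multiplying by $x$ and using associativity in $\overline{G}$ together with $x^{-1} x = e$ gives $y = x$, contradicting $x \neq y$.

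The main subtlety lies in choosing the correct auxiliary maps: the classical translation trick from topological groups --- left-multiplication by $x y^{-1}$ --- is unavailable, because $x y^{-1}$ need not lie in $G$ and continuity of multiplication is only postulated on $G \times G \to \overline{G}$. The maps $\phi$ and $\psi$ circumvent this by multiplying only by the individual elements $y$ and $x^{-1}$ of $G$; after this setup the rest is a routine combination of the functoriality of closure under continuous maps and the $T_0$ hypothesis.
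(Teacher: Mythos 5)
Your proof is correct, and while it shares the paper's overall strategy --- produce a point of $\overline{G}$ (a product built from the two $T_{1}$-violating points) that is topologically indistinguishable from $e$, contradicting the $T_{0}$-property of $\overline{G}$ --- the execution is genuinely different and in fact tighter. The paper chases neighborhoods using the joint continuity of $f\colon G\times G\to\overline{G}$ at the two points $(x^{-1},g)$ and $(g^{-1},g)$; you instead factor everything through the one-variable translations $\phi(z)=yz^{-1}$ and $\psi(z)=zx^{-1}$, which are continuous as the composite of inversion with $L_{y}$ and as $R_{x^{-1}}$ (Proposition~\ref{p2}), and then use that continuous maps preserve the relation ``lies in the closure of a singleton.'' What this buys is real: both conclusions $e\in\{yx^{-1}\}^{c}_{\tau}$ and $yx^{-1}\in\{e\}^{c}_{\tau}$ are extracted from the single hypothesis $y\in\{x\}^{c}$, whereas the paper's second step (continuity at $(g^{-1},g)$ forcing every neighborhood of $e$ to contain $x^{-1}g$) tacitly needs every neighborhood of $g^{-1}$ to contain $x^{-1}$, i.e.\ the reversed indistinguishability relation, which is not what was derived from the non-closedness of $\{g\}$; your formulation sidesteps this asymmetry entirely. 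Your closing step --- invoking $T_{0}$ to get the equality $e=yx^{-1}$ and then $x=y$ by associativity in $\overline{G}$ --- is a clean equivalent of the paper's ``contradiction with $T_{0}$.''
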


\begin{proof}
It suffices to prove that each point of $G$ is closed in $G$. Assume there exists a point $g\in G$ such that $\{g\}$ is not closed in $G$, then there exists a point $x\in G\setminus\{g\}$ such that $x\in \{g\}^{c}$. Hence, for any open neighborhood $V_{x}$ of $x$ in $G$, we have $g\in V_{x}$, then $g^{-1}$ belongs to the open neighborhood $V_{x}^{-1}$ of $x^{-1}$ since the inverse mapping is a homeomorphism. Clearly, $x^{-1}g\neq e$ and each open neighborhood of $(x^{-1}, g)$ in the product space $G\times G$ must contain the point $(g^{-1}, g)$. Because $f: G\times G\rightarrow \overline{G}$ is continuous at the point $(x^{-1}, g)$, it follows that each open neighborhood of $x^{-1}g$ in $\overline{G}$ contains the point $e$, hence $x^{-1}g\in \{e\}^{c}_{\tau}$. Moreover, Because $f: G\times G\rightarrow \overline{G}$ is continuous at the point $(g^{-1}, g)$, each open neighborhood of $e$ in $\overline{G}$ contains the point $x^{-1}g$, thus $e\in\{x^{-1}g\}^{c}_{\tau}$. Hence $x^{-1}g\in \{e\}^{c}_{\tau}$ and $e\in\{x^{-1}g\}^{c}_{\tau}$. However, $\overline{G}$ is $T_{0}$, which is a contradiction.
\end{proof}

In Example~\ref{e0}, the rough identity element $e$ does not belong to $G$. If $e\in G$, does the separation axiom $T_{0}$ imply $T_{1}$ in $G$? Indeed, we have the following example which gives a negative answer to this question.

\begin{example}\label{e2}
There exists a $T_{0}$-topological rough group $G$ such that $e\in G$ and $\tau_{G}$ is not $T_{1}$.
\end{example}

\begin{proof}
Let $U=\{1, 2, 3, 4, 5, 6, 7, 8, 9, 10\}$ and let ($\ast$) be the binary operation on $U$ as follows:
\[a\ast b=\left\{
\begin{array}{lll}
a\times b, & \mbox{if}\ a\times b<11,\\
a\times b\ (\mbox{mod}\ 11), & \mbox{if } a\times b\geq 11.\end{array}\right.\]
 A classification of $U$ is $U/R$, where $E_{1}=\{1, 2, 5\}$, $E_{2}=\{3, 8, 9\}$ and $E_{2}=\{4, 6, 7, 10\}$. Let $G=\{1, 2, 5, 6, 9\}$. Then $\overline{G}=U$. Let $$\mathscr{B}=\{\{1, 7, 8\}, \{3, 4, 10\}, \{2\}, \{6\}, \{2, 5\}, \{6, 9\}\}$$ be a base of the topology $\tau$ on $\overline{G}=U$. It is easy to see that $\tau_{G}$ has a base $$\mathscr{B}_{G}=\{\{1\}, \{2\}, \{6\}, \{2, 5\}, \{6, 9\}\}.$$ Obviously, $G$ is a $T_{0}$-space and $e\in G$. However, $G$ is not $T_{1}$ since the set $\{2\}$ is not closed in $G$.
\end{proof}

In Example~\ref{e2}, we see that the set $\{e\}$ is open and closed in $G$. Indeed, we have the following Corollary~\ref{c0}. First, we give some propositions, which are interesting themselves.

\begin{proposition}\label{p0}
Let $G$ be a $T_{0}$-topological rough group. If $e\in G$ then $\{e\}$ is closed in $G$.
\end{proposition}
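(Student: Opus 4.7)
The plan is to show that every $y \in G$ with $y \neq e$ admits an open neighborhood in $G$ missing $e$, so that $\{e\}^{c} = \{e\}$. Fix such a $y$. Since $G$ is $T_{0}$, I would first invoke the $T_{0}$ axiom to produce an open set $U \subset G$ containing exactly one of $e,y$. If $y \in U$ and $e \notin U$, then $U$ itself is the desired neighborhood, so I may assume instead that $e \in U$ and $y \notin U$. I would then lift $U$ to a $V \in \tau$ with $V \cap G = U$; since $y \in G$, this lift guarantees $e \in V$ and $y \notin V$.

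The main idea is then to construct a continuous map $\varphi : G \to \overline{G}$ that swaps the roles of $e$ and $y$, namely
\[
\varphi(x) = y \cdot x^{-1}.
\]
This is well-defined because $x^{-1} \in G$ by rough group axiom~(4) and then $y \cdot x^{-1} \in \overline{G}$ by axiom~(1); it is continuous as the composition of the inverse map $g : G \to G$ with the restriction $f\!\restriction_{\{y\} \times G} : G \to \overline{G}$. A direct check, using that $e^{-1} = e$ (which follows from axioms~(3) and~(4) applied to $x = e$: any inverse $z$ of $e$ satisfies both $e \cdot z = e$ and $e \cdot z = z$), yields
\[
\varphi(y) = y \cdot y^{-1} = e \in V, \qquad \varphi(e) = y \cdot e^{-1} = y \cdot e = y \notin V.
\]
Hence $\varphi^{-1}(V)$ is an open subset of $G$ containing $y$ but not $e$, which is exactly the neighborhood sought, and the proof is complete.

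The only delicate point I anticipate is the bookkeeping between the two topologies: the $T_{0}$ hypothesis lives on $(G, \tau_{G})$, whereas continuity of $f$ delivers preimages of sets in $\tau$ on $\overline{G}$, so the separating open set $U$ must be lifted to a $V \in \tau$, after which one must verify $y \notin V$ from $y \in G$ and $V \cap G = U$. Once that translation is in hand, the heart of the argument reduces to the one-line construction of $\varphi$ combining the continuity of both the inverse and the multiplication.
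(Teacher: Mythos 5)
Your proof is correct. It takes a somewhat different route from the paper's: the paper argues by contradiction, taking $g\neq e$ with $g\in\{e\}^{c}$, using the inverse homeomorphism to get $g^{-1}\in\{e\}^{c}$, and then the continuity of $f$ at the single point $(g,g^{-1})$ to conclude that every neighborhood of $e$ contains both $g$ and $g^{-1}$, which is incompatible with $T_{0}$. You instead give a direct argument: for each $y\neq e$ the $T_{0}$ axiom yields a separating open set, and in the only nontrivial case (the set contains $e$ but not $y$) you transport it through the continuous map $\varphi(x)=yx^{-1}$, which swaps $e$ and $y$, to produce a neighborhood of $y$ missing $e$. Both arguments rest on the same underlying facts --- continuity of multiplication and inversion together with $yy^{-1}=e$ and $e^{-1}=e$ --- but yours packages them into a single one-variable map, avoids the closure bookkeeping of the contradiction argument, and makes explicit a point the paper glosses over: since $\varphi$ (like $f$) takes values in $\overline{G}$ rather than $G$, the separating set must be lifted from $\tau_{G}$ to $\tau$ before pulling back, after which $y\notin V$ follows from $y\in G$ and $V\cap G=U$. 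Your explicit verification that $e^{-1}=e$, needed for $\varphi(e)=y$, is also a worthwhile detail.
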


\begin{proof}
Assume $\{e\}$ is not closed, then there exists a point $g\in G\setminus\{e\}$ such that $g\in\{e\}^{c}$. Since the inverse mapping is a homeomorphism, then it is easy to see that $g^{-1}\in\{e\}^{c}$. Hence $e\in V$ and $e\in W$ for any open neighborhoods $V$ and $W$ of $g$ and $g^{-1}$ in $G$, respectively. Because $f: G\times G\rightarrow \overline{G}$ is continuous at the point $(g, g^{-1})$, it is easy to see that $\{g, g^{-1}\}\subset U$ for any open neighborhood $U$ of $e$ in $G$. However, since $G$ is $T_{0}$ and $\{g, g^{-1}\}\subset\{e\}^{c}$, there exists an open neighborhood $W$ of $e$ such that $W\cap \{g, g^{-1}\}=\emptyset$, which is a contradiction.
\end{proof}

\begin{proposition}\label{p10000}
Let $G$ be a $T_{0}$-topological rough group. If $e\in G$ then $e\not\in\{g\}^{c}$ for each $g\in G\setminus\{e\}$.
\end{proposition}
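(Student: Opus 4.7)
The plan is to argue by contradiction and reduce the claim to Proposition~\ref{p0}. Suppose, for some $g\in G\setminus\{e\}$, that $e\in\{g\}^{c}$; equivalently, every open neighbourhood of $e$ in $G$ contains the point $g$. Because the inverse mapping $x\mapsto x^{-1}$ is continuous and its own inverse, it is a self-homeomorphism of $G$, and it fixes $e$ since $e^{-1}=e$. Applying it to the hypothesis therefore yields $e\in\{g^{-1}\}^{c}$ as well, so every open neighbourhood of $e$ in $G$ also contains $g^{-1}$.

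The next step is to transfer this information from neighbourhoods of $e$ to neighbourhoods of $g$ by using the continuity of the product map $f\colon G\times G\to\overline{G}$ at the point $(e,g)$, where $f(e,g)=g$. Given any open neighbourhood $W$ of $g$ in $\overline{G}$, continuity produces open neighbourhoods $V_{1}$ of $e$ and $V_{2}$ of $g$ in $G$ with $V_{1}V_{2}\subset W$. By the previous paragraph $g^{-1}\in V_{1}$, and $g\in V_{2}$ by choice, so $e=g^{-1}g\in V_{1}V_{2}\subset W$. Hence every open neighbourhood of $g$ in $\overline{G}$ contains $e$; since $\tau_{G}$ is the subspace topology inherited from $\tau$ and $e\in G$, every open neighbourhood of $g$ in $G$ likewise contains $e$. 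In other words, $g\in\{e\}^{c}$.

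Proposition~\ref{p0} now closes the argument: under the same hypotheses as the present statement, $\{e\}$ is closed in $G$, so $\{e\}^{c}=\{e\}$. Combined with $g\in\{e\}^{c}$ this forces $g=e$, contradicting the assumption $g\in G\setminus\{e\}$.

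The only mildly delicate point I anticipate is keeping the two closure operations straight: the statement uses closure in $(G,\tau_{G})$, while $f$ delivers its output into $\overline{G}$, so the continuity step naturally produces information about neighbourhoods in $\overline{G}$ first; the passage back to $G$ uses precisely that $\tau_{G}$ is the subspace topology and that $e$ actually lies in $G$. Everything else is a direct reuse of the bookkeeping already exploited in the proof of Proposition~\ref{p0}.
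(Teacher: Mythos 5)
Your proof is correct and follows essentially the same route as the paper's: from $e\in\{g\}^{c}$ you deduce that every neighbourhood of $e$ contains $g$ and (via the inverse homeomorphism) $g^{-1}$, then use continuity of $f$ at $(e,g)$ to show every neighbourhood of $g$ contains $e$, contradicting Proposition~\ref{p0}. Your write-up is just a more detailed version of the paper's argument, with the bookkeeping between closures in $G$ and in $\overline{G}$ made explicit.
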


\begin{proof}
Assume that there exists a point $g\in G\setminus\{e\}$ such that $e\in\{g\}^{c}$, then each open neighborhood of $e$ in $G$ contain the points $g$ and $g^{-1}$. Because $f: G\times G\rightarrow \overline{G}$ is continuous at the point $(e, g)$, each open neighborhood of $g$ must contain the rough identity element $e$, hence $g\in\{e\}^{c}$, that is, the set $\{e\}$ is not closed in $G$, which is a contradiction with Proposition~\ref{p0}.
\end{proof}

From Propositions~\ref{p0} and~\ref{p10000}, if $G$ is a finite topological rough group, then the set $\{e\}$ is open and closed in $G$, see Corollary~\ref{c0}.

\begin{corollary}\label{c0}
Let $G$ be a $T_{0}$-topological rough group. If $e\in G$ and $G$ is finite, then the set $\{e\}$ is open and closed in $G$.
\end{corollary}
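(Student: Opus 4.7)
The plan is to combine the two preceding propositions and exploit finiteness in the standard way one deduces that points are isolated in a finite $T_{1}$-type space.

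First, closedness of $\{e\}$ in $G$ is immediate from Proposition~\ref{p0}, so the real content of the corollary is openness of $\{e\}$. For this I would invoke Proposition~\ref{p10000}: for every $g\in G\setminus\{e\}$ we have $e\notin\{g\}^{c}$, which unpacks to the existence of an open neighbourhood $U_{g}$ of $e$ in $G$ with $g\notin U_{g}$.

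Now the finiteness hypothesis does the rest. Since $G\setminus\{e\}$ is finite, the set
\[
U \;=\; \bigcap_{g\in G\setminus\{e\}} U_{g}
\]
is a finite intersection of open neighbourhoods of $e$ in $(G,\tau_{G})$, and hence is itself an open neighbourhood of $e$ in $G$. By construction $U$ contains $e$ but misses every other point of $G$, so $U=\{e\}$, which proves that $\{e\}$ is open in $G$.

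I do not expect any real obstacle here; Propositions~\ref{p0} and~\ref{p10000} have already done the topological work at $e$, and the only new ingredient is that finiteness upgrades the collection of neighbourhoods $\{U_{g}\}_{g\neq e}$ into a single neighbourhood by finite intersection. The one point worth being careful about is that the neighbourhoods $U_{g}$ produced by Proposition~\ref{p10000} should be taken inside $G$ (with the induced topology $\tau_{G}$), so that the finite intersection is still an open set of $G$ rather than merely of $\overline{G}$; this is automatic since the propositions are stated relative to $G$.
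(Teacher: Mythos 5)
Your proposal is correct and follows essentially the same route as the paper, which derives the corollary directly from Propositions~\ref{p0} and~\ref{p10000} together with finiteness (the paper leaves the finite-intersection step implicit). Recasting ``$e\notin\{g\}^{c}$ for each $g\neq e$'' as a finite intersection of neighbourhoods $U_{g}$ of $e$ is exactly the intended argument, and your remark that the $U_{g}$ live in $(G,\tau_{G})$ is the right point of care.
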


The following example shows that a $T_{1}$-topological rough group $G$ with $e\not\in G$ need not to be Hausdorff.

\begin{example}\label{e3}
There exists a $T_{1}$-topological rough group $G$ such that $G$ is not Hausdorff and $e\not\in G$.
\end{example}

\begin{proof}
Let $U=\mathbb{Z}$ be a set of integer number and let ($\ast$) be the usual addition. A classification of $U$ is $U/R$, where $$E_{1}=\{4k+1, 4k+2: k\in\mathbb{Z}\}\ \mbox{and}\ E_{2}=\{4k+3, 4k+4: k\in\mathbb{Z}\}.$$ Let $G=\{2k+1: k\in\mathbb{Z}\}$. Then $\overline{G}=\mathbb{Z}$ and $0\not\in G$. Now we endow $\overline{G}$ with a topology $\tau$ as follows:

\smallskip
For each $x\in \mathbb{Z}\setminus G$, the point $x$ has a neighborhood base $\{\mathbb{Z}\}$;

\smallskip
For each $x\in G$, the point $x$ has a neighborhood base consisting of the sets of the form $(G\setminus F)\cup\{x\}$, where $F$ is an arbitrary finite subset of $\mathbb{Z}$.

Clearly, $$\tau_{G}=\{\emptyset\}\cup\{G\setminus F: F\ \mbox{is a finite subset of}\ G\}.$$ It is easy to check that $G$ is a $T_{1}$-topological rough group. However, $G$ is not Haudorff.
\end{proof}

The following theorem shows that $G$ is Hausdorff if $\{e\}$ is closed in $\overline{G}$.

\begin{theorem}\label{t4}
If $G$ is a topological rough group such that $\{e\}$ is closed in $\overline{G}$, then $G$ is Hausdorff.
\end{theorem}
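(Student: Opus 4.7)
The plan is to show that the diagonal $\Delta_G=\{(x,x):x\in G\}$ is closed in the product space $G\times G$, which is equivalent to $G$ being Hausdorff. To this end I would introduce the map $\phi:G\times G\to\overline{G}$ defined by $\phi(a,b)=a^{-1}b$. Since $G$ is a topological rough group, both the inverse mapping $g:G\to G$ and the product mapping $f:G\times G\to\overline{G}$ are continuous, so $\phi=f\circ(g\times\mathrm{id}_G)$ is continuous.

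The crucial algebraic step is the identity $\phi(a,b)=e$ iff $a=b$. The ``if'' direction is immediate from $aa^{-1}=e$. For the converse, if $a^{-1}b=e$ then, because $a,a^{-1},b\in G\subseteq\overline{G}$, associativity in $\overline{G}$ together with the defining property of the rough identity yield
$$a=ae=a(a^{-1}b)=(aa^{-1})b=eb=b.$$
Hence $\phi^{-1}(\{e\})=\Delta_G$. Since $\{e\}$ is closed in $\overline{G}$ by hypothesis and $\phi$ is continuous, $\Delta_G$ is closed in $G\times G$, so $G$ is Hausdorff.

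The main obstacle is only to verify that every step respects the rough group axioms (associativity is only guaranteed among elements of $\overline{G}$, and rough inverses only exist for elements of $G$); with $a,b\in G$ fixed, both conditions are satisfied. If one prefers to avoid the ``closed diagonal $\Leftrightarrow$ Hausdorff'' criterion, the same idea works pointwise: for distinct $x,y\in G$ one has $x^{-1}y\in\overline{G}\setminus\{e\}$, and continuity of $\phi$ at $(x,y)$ together with $\overline{G}\setminus\{e\}$ being open produce basic open neighborhoods $U\ni x$ and $V\ni y$ in $G$ with $\phi(U\times V)\subseteq\overline{G}\setminus\{e\}$; any $z\in U\cap V$ would give $\phi(z,z)=e$, a contradiction, so $U\cap V=\emptyset$.
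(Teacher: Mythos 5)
Your proof is correct. The paper argues by contradiction and entirely at the level of neighborhoods: assuming $x,y\in G$ cannot be separated, it shows that every open neighborhood of $xy^{-1}$ in $\overline{G}$ contains $e$, so $xy^{-1}\in\{e\}^{c}_{\tau}$, contradicting the closedness of $\{e\}$ (with the fact that $xy^{-1}\neq e$ left implicit). You instead package the same continuity of $(a,b)\mapsto a^{-1}b$ through the standard criterion that Hausdorffness is equivalent to the diagonal $\Delta_G$ being closed in $G\times G$, and you reduce everything to the single preimage computation $\phi^{-1}(\{e\})=\Delta_G$. Both arguments hinge on the same two ingredients --- continuity of the division map built from $f$ and $g$, and the algebraic fact that $a^{-1}b=e$ forces $a=b$ (which requires associativity in $\overline{G}$ and the identity axiom, both of which you correctly check apply since $a,a^{-1},b\in\overline{G}$) --- so the content is the same, but your formulation is cleaner and makes explicit the step the paper glosses over. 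Your closing ``pointwise'' variant is essentially the paper's proof read in contrapositive form, so you have in effect given both versions. The only thing worth keeping in mind is that the division map must land in $\overline{G}$ rather than $G$ (since $a^{-1}b$ need not lie in $G$), which is exactly why the codomain of $\phi$ is $\overline{G}$ and why the hypothesis is that $\{e\}$ is closed in $\overline{G}$ rather than in $G$; you have handled this correctly.
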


\begin{proof}
Assume that $G$ is not Hausdorff, then there exist two distinct points $x, y\in G$ such that $x$ and $y$ cannot be separated by open neighborhoods in $G$. Clearly, $xy^{-1}\in\overline{G}$. We claim that $xy^{-1}\in\{e\}^{c}_{\tau}$ in $\overline{G}$, which will obtain a contradiction since $\{e\}$ is closed in $\overline{G}$. Indeed, take an arbitrary open neighborhood $W$ of $xy^{-1}$ in $\overline{G}$. Since $f: G\times G\rightarrow \overline{G}$ is continuous at point $(x, y^{-1})$, there exist open neighborhoods $U$ and $V$ of $x$ and $y^{-1}$ in $G$ respectively such that $UV\subset W$. Because the inverse mapping is a homeomorphism, $V^{-1}$ is an open neighborhood of $y$ in $G$. Since $U\cap V^{-1}\neq\emptyset$, there exists a point $z\in U\cap V^{-1}$, then $(z, z^{-1})\in U\times V$, hence $W$ contains the point $e$. Therefore, $xy^{-1}\in\{e\}^{c}_{\tau}$.
\end{proof}

By Theorem~\ref{t4}, we have the following corollary.

\begin{corollary}\label{c1}
If $G$ is a topological rough group with $\overline{G}$ being $T_{1}$, then $G$ is Hausdorff.
\end{corollary}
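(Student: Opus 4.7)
The plan is to reduce this immediately to Theorem~\ref{t4}. The hypothesis of that theorem is that $\{e\}$ is closed in $\overline{G}$, and in a $T_{1}$-space every singleton is closed. Since the rough identity element $e$ belongs to $\overline{G}$ by clause (3) of the definition of rough group, the assumption that $\overline{G}$ is $T_{1}$ immediately gives that $\{e\}$ is closed in $\overline{G}$.

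Therefore Theorem~\ref{t4} applies directly and yields that $G$ is Hausdorff. There is no real obstacle here; the corollary is a one-line consequence of the preceding theorem, and its purpose is simply to reformulate the closedness-of-$\{e\}$ hypothesis as a uniform separation axiom on the upper approximation space $\overline{G}$ rather than as a property of the particular point $e$. This parallels the classical fact that every $T_{0}$-topological group is Hausdorff, with the caveat that in the rough setting the separation hypothesis must be imposed on $\overline{G}$ rather than merely on $G$ itself, as Examples~\ref{e0} and~\ref{e3} make clear.
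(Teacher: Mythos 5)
Your proposal is correct and is exactly the paper's argument: the corollary is stated as an immediate consequence of Theorem~\ref{t4}, since $T_{1}$-ness of $\overline{G}$ gives that the singleton $\{e\}$ is closed in $\overline{G}$. Nothing further is needed.
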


\begin{remark}
By Theorem~\ref{t3}, if $\overline{G}$ is $T_{0}$ then $G$ is $T_{1}$, but we cannot say that $\overline{G}$ is $T_{1}$, see Example~\ref{e0}; otherwise, $\overline{G}$ is Hausdorff by Corollary~\ref{c1}. Moreover, the following example is a complement of Example~\ref{e3}.
\end{remark}

\begin{example}\label{e4}
There exists a $T_{1}$-topological rough group $G$ such that $e\in G$ and $G$ is not Hausdorff, thus $G$ is not regular.
\end{example}

\begin{proof}
Take the same approximation space in Example~\ref{e3}. Let $$G=\{2k+1: k\in\mathbb{Z}\}\cup\{0\}.$$ Then $\overline{G}=\mathbb{Z}$ and $0\in G$. Now we endow $\overline{G}$ with a topology $\tau$ as follows:

\smallskip
For each $x\in \mathbb{Z}\setminus (G\setminus\{0\})$, the point $x$ has a neighborhood base $\{(\mathbb{Z}\setminus G)\cup\{0\}\}$;

\smallskip
For each $x\in G$, the point $x$ has a neighborhood base consisting of the sets of the form $(G\setminus F)\cup\{x\}$, where $F$ is an arbitrary finite subset of $\mathbb{Z}$.

Then $$\tau_{G}=\{\emptyset, \{0\}\}\cup\{G\setminus F: F\ \mbox{is a finite subset of}\ G\}.$$ It is easy to check that $G$ is a $T_{1}$-topological rough group. However, $G$ is not Haudorff.
\end{proof}

From Example~\ref{e4}, a $T_{1}$-topological rough group $G$ need not to be Hausdorff. However, we have the following result.

\begin{proposition}
Let $G$ be a $T_{1}$-topological rough group. If $e\in G$ then for each $g\in G\setminus\{e\}$ there exist open neighborhoods $U$ and $V$ of $e$ and $g$ in $G$ respectively such that $U\cap V=\emptyset$.
\end{proposition}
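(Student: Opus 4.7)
The plan is to adapt the approach of Theorem~\ref{t4}, replacing its hypothesis ``$\{e\}$ closed in $\overline{G}$'' by the weaker hypothesis ``$G$ is $T_{1}$''. The difficulty is that Theorem~\ref{t4}'s argument only places $xy^{-1}$ in the closure of $\{e\}$ inside $\overline{G}$, which need not be a contradiction when $\{e\}$ fails to be closed in $\overline{G}$ (cf.\ Example~\ref{e4}). So I will instead build the separating neighborhoods directly, using continuity of multiplication at $(g,e)$.

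First, since $G$ is $T_{1}$ and $e\in G$, the singleton $\{e\}$ is closed in $G$, so $G\setminus\{e\}$ is open in $G$ and contains $g$. By the definition of the subspace topology, there exists an open set $W\subset\overline{G}$ with $W\cap G=G\setminus\{e\}$; in particular $g\in W$ and $e\notin W$. Since $f\colon G\times G\to\overline{G}$ is continuous at $(g,e)$ with $f(g,e)=g\in W$, I can choose open neighborhoods $U_{1}$ of $g$ and $V_{1}$ of $e$ in $G$ with $U_{1}V_{1}\subset W$, so that $e\notin U_{1}V_{1}$.

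The key observation is that $e\notin U_{1}V_{1}$ forces $U_{1}\cap V_{1}^{-1}=\emptyset$: if $z\in U_{1}\cap V_{1}^{-1}$, then $z^{-1}\in V_{1}$, whence $e=zz^{-1}\in U_{1}V_{1}$, contradicting the previous line. Because the inverse mapping is a homeomorphism of $G$ and $e^{-1}=e\in V_{1}$, the set $V_{1}^{-1}$ is an open neighborhood of $e$ in $G$, and setting $U:=V_{1}^{-1}$ and $V:=U_{1}$ yields the required disjoint open neighborhoods of $e$ and $g$.

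I do not anticipate a serious obstacle. The only subtlety is choosing the right point at which to invoke continuity of $f$: working at $(g,e)$, rather than at $(g,g^{-1})$ as in Theorem~\ref{t4}, lets the condition ``$e\notin U_{1}V_{1}$'' come for free from a lifted neighborhood missing $e$, and the inverse homeomorphism then converts the neighborhood $V_{1}$ of $e$ into one genuinely disjoint from $U_{1}$ inside $G$.
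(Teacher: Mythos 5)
Your proof is correct, and it is essentially the contrapositive of the paper's own argument: both hinge on the observation that $e\in UV$ exactly when $U\cap V^{-1}\neq\emptyset$, combined with continuity of the multiplication at a point of the form $(g,e)$ or $(e,g)$ and the fact that inversion is a homeomorphism of $G$. The paper argues by contradiction at $(e,g)$, using a symmetric neighborhood of $e$ to conclude $g\in\{e\}^{c}$ and contradict $T_{1}$, while you argue directly at $(g,e)$ starting from an open subset of $\overline{G}$ containing $g$ and missing $e$; the underlying mechanism is the same.
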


\begin{proof}
Suppose not, then there exists a point $g\in G\setminus\{e\}$ such that $U\cap V\neq\emptyset$ for any open neighborhoods $U$ and $V$ of $e$ and $g$ in $G$ respectively, where we may assume that $U$ is symmetric. Hence the intersection of any open neighborhood of the point $(e, g)$ in the product space $G\times G$ with the set $\{(z, z^{-1}): z\in G\}$ is non-empty. By the continuous of the mapping $f: G\times G\rightarrow \overline{G}$ at point $(e, g)$, each open neighborhood of $g$ in $G$ must contain the point $e$, hence $g\in\{e\}^{c}$. However, $\{e\}$ is closed in $G$ since $G$ is $T_{1}$, which is a contradiction.
\end{proof}

If $G$ is a $T_{0}$-strongly topological rough group, then the situation is quite different.

\begin{theorem}\label{t0}
If $G$ is a $T_{0}$-strongly topological rough group then $G$ is Hausdorff.
\end{theorem}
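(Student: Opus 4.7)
The plan is to leverage the three-variable continuity of $h\colon G_0^3\to\overline{G}$ together with the cancellation identity $zz^{-1}y=y$ in order to produce, for any two distinct points $x,y\in G$, explicit disjoint open neighborhoods of $x$ and $y$ in $G$.

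To set things up, fix $x\neq y$ in $G$. Since $G$ is $T_0$, one of the two has an open neighborhood in $G$ avoiding the other; up to swapping the roles of $x$ and $y$, I can lift this neighborhood to the ambient topology and obtain $\widetilde{U}\in\tau$ with $x\in \widetilde{U}$ and $y\notin \widetilde{U}$ (the latter survives because $y\in G$). Next, I would feed the triple $(x,y^{-1},y)$ into $h$: it lies in $G_0^3$, since $xy^{-1}y=x\in G\subset\overline{G}$, and $h(x,y^{-1},y)=x\in\widetilde{U}$. By continuity of $h$, there exist open sets $V_1,V_2,V_3$ in $G$ with $x\in V_1$, $y^{-1}\in V_2$, $y\in V_3$ and
\[
(V_1\times V_2\times V_3)\cap G_0^3\subset h^{-1}(\widetilde{U}).
\]

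The heart of the argument is then the claim $V_1\cap V_2^{-1}=\emptyset$. If some $z$ belonged to this intersection, the triple $(z,z^{-1},y)$ would lie in $V_1\times V_2\times V_3$ and also in $G_0^3$ (because $zz^{-1}y=y\in\overline{G}$), forcing $y=h(z,z^{-1},y)\in\widetilde{U}$ and contradicting $y\notin\widetilde{U}$. Since inversion is a homeomorphism of $G$, $V_2^{-1}$ is an open neighborhood of $y$ in $G$, and together with $V_1\ni x$ it yields the desired Hausdorff separation of $x$ and $y$.

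The main obstacle is spotting which triple in $G_0^3$ to differentiate at; the analogous two-variable attempt using the map $(a,b)\mapsto ab$ at $(x,y^{-1})$ or $(y,y^{-1})$ would require $xy^{-1}$ or $e$ to be distinguishable from $y$ inside $\overline{G}$, which is precisely what $T_0$ on $G$ alone fails to guarantee (compare Example~\ref{e0}, where $\overline{G}$ is not even $T_0$). The three-variable map $h$ circumvents this by permitting an inert factor $zz^{-1}$ whose free parameter $z$ can be swept across $V_1$ while the total product is pinned to $y$, which is exactly what makes the strongly topological hypothesis decisive.
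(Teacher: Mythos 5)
Your proof is correct and rests on exactly the same mechanism as the paper's: continuity of the three-variable map $h$ at a triple containing an adjacent inverse pair, perturbed via the cancellation $zz^{-1}y=y$ (respectively $xz^{-1}z=x$) at a point $z$ lying in the overlap of the two candidate neighborhoods. The only difference is organizational --- the paper argues by contradiction, showing that two non-separable points each lie in the other's closure and thereby violate $T_0$, while you run the argument directly from a $T_0$-witness $\widetilde{U}$ to the disjoint neighborhoods $V_1$ and $V_2^{-1}$ --- so the two proofs are essentially the same.
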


\begin{proof}
Assume that $G$ is not Hausdorff, then there exist two distinct points $x$ and $g$ in $G$ such that $U\cap V\neq\emptyset$ for any open neighborhoods $U$ and $V$ of $x$ and $g$ in $G$ respectively. We claim that $g\in \{x\}^{c}$ and $x\in \{g\}^{c}$, which is a contradiction with the $T_{0}$-property of $G$. Indeed, we prove that $g\in \{x\}^{c}$, and the proof of $x\in \{g\}^{c}$ is similar. Take any open neighborhood $W$ of $g$ in $\overline{G}$. Clearly, any open neighborhood of the point $(x, x^{-1}, g)$ in the subspace $G_{0}^{3}$ of the product space $(G\times G\times G)\cap G_{0}^{3}$ must contain some point of the form $(x, z^{-1}, z)$. Since $G$ is a strongly topological rough group, the mapping $f: G_{0}^{3}\rightarrow \overline{G}$ is continuous at the point $(x, x^{-1}, g)$, then $W$ contains the point $x$, thus $x\in W\cap G$. Therefore, $g\in \{x\}^{c}$. For the proof of $x\in \{g\}^{c}$, we only note that any open neighborhood of the point $(x, g^{-1}, g)$ in the subspace $G_{0}^{3}$ must contain some point of the form $(z, z^{-1}, g)$.
\end{proof}

However, the following two questions are open.

\begin{question}
Does there exist a Hausdorff topological rough group $G$ such that $G$ is not regular?
\end{question}

\begin{question}
If $G$ is strongly topological rough group $G$ with $\overline{G}$ being Hausdorff, is $G$ regular?
\end{question}

 \maketitle
\section{Basic properties of topological rough group}
In this section, we study some basic properties of topological rough groups. We mainly discuss the neighborhoods of rough identity element $e$ of topological rough groups.

Let $\tau$ be an arbitrary topology on $\overline{G}$. For any $g\in \overline{G}$, denote some neighborhoods base of $g$ in $\overline{G}$ by $\tau(g)$, and put $\tau_{G}(g)=\{U\cap G: U\in\tau(g)\}$. Then it follows from the definition of topological rough group, we have the following theorem, and the proof is obvious and we leave it to the reader.

\begin{theorem}\label{t6}
Let $G$ be a rough group and let $\tau$ be a topology on $\overline{G}$. For each $g\in G\cup G^{2}$, if we can choose a neighborhoods base $\tau(g)$ of $g$ in $\overline{G}$ satisfies the following conditions, then $G$ is a topological rough group.

\smallskip
(i) For any $g\in G$, $\tau_{G}(g)^{-1}=\tau_{G}(g^{-1})$;

 \smallskip
(ii) For any $g, h\in G$ and $W\in \tau(gh)$, there exist $U\in\tau_{G}(g)$ and $V\in\tau_{G}(h)$ such that $UV\subset W$.
\end{theorem}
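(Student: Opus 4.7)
The plan is to verify the two defining properties of a topological rough group directly from the hypotheses (i) and (ii), exploiting the elementary fact that whenever $g\in G$ the family $\tau_{G}(g)=\{U\cap G:U\in\tau(g)\}$ is a neighborhoods base of $g$ in the subspace $(G,\tau_{G})$, and that for $g,h\in G$ the product $gh$ lies in $G^{2}\subset\overline{G}$, so $\tau(gh)$ is actually supplied by the hypothesis.

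First I would establish continuity of the product mapping $f\colon G\times G\to\overline{G}$. Fix an arbitrary $(x,y)\in G\times G$ and an arbitrary open set $W\subset\overline{G}$ containing $f(x,y)=xy$. Since $xy\in G^{2}$, we can shrink $W$ to some basic neighborhood $W'\in\tau(xy)$ with $W'\subset W$. Condition (ii) then furnishes $U\in\tau_{G}(x)$ and $V\in\tau_{G}(y)$ with $UV\subset W'\subset W$. Because $U\times V$ is a neighborhood of $(x,y)$ in the product topology on $G\times G$ whose image lies in $W$, continuity of $f$ at $(x,y)$ follows; as $(x,y)$ was arbitrary, $f$ is continuous.

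Next I would handle continuity of the inverse $g\colon G\to G$, $g(x)=x^{-1}$. Fix $x\in G$ and an arbitrary open neighborhood $V$ of $x^{-1}$ in $G$. Choose a basic neighborhood $V'\in\tau_{G}(x^{-1})$ with $V'\subset V$. Condition (i) says $\tau_{G}(x^{-1})=\tau_{G}(x)^{-1}$, so there exists $U\in\tau_{G}(x)$ with $U^{-1}=V'\subset V$. Then $U$ is an open neighborhood of $x$ in $G$ whose image under the inversion mapping is contained in $V$, giving continuity of the inverse at $x$. Combining the two steps, both clauses of the definition of a topological rough group hold.

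No substantive obstacle is expected: the theorem is a routine translation of the continuity conditions in the definition of topological rough group into a neighborhoods-base criterion, which is why the authors omit the argument. The only point requiring care is bookkeeping — keeping straight which neighborhood base is being used in $\overline{G}$ (namely $\tau(\cdot)$, available for $g\in G\cup G^{2}$) versus in $G$ (namely $\tau_{G}(\cdot)$, obtained by intersecting with $G$), and observing that the product $gh$ of elements of $G$ automatically lands in $G^{2}$ so that $\tau(gh)$ is defined.
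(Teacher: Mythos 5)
Your proof is correct and is exactly the routine verification the paper has in mind when it ``leaves the proof to the reader'': condition (ii) yields continuity of the product mapping after shrinking an arbitrary open set containing $xy\in G^{2}$ to a basic member of $\tau(xy)$, and condition (i) yields continuity of inversion. The only cosmetic slip is calling the members of $\tau_{G}(x)$ ``open'' neighborhoods --- they are merely neighborhoods unless the base is chosen open --- but this does not affect the continuity argument.
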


In \cite{BIO2016}, the authors gave the following proposition.

\begin{proposition}\cite[Proposition 17]{BIO2016}
Let $G$ be a topological rough group and let $W\subset \overline{G}$ be an open subset with $e\in W$. Then there
exists an open set $V$ with $e\in V$ such that $V=V^{-1}$ and $V^{2}\subset W$.
\end{proposition}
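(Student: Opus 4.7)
\smallskip

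The plan is to imitate the classical topological-group argument almost verbatim, using only the two continuity axioms in the definition of a topological rough group. The statement tacitly assumes $e\in G$ (otherwise $(e,e)\notin G\times G$ and continuity of $f$ at $(e,e)$ is not even a meaningful assertion, and $V\subseteq G$ could not contain $e$); this is the only place where the rough-group setting needs any care.

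\smallskip

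First, I would invoke continuity of the multiplication $f\colon G\times G\to \overline{G}$ at the point $(e,e)$. Since $f(e,e)=ee=e\in W$ and $W$ is open in $\overline{G}$, there exist open neighborhoods $U_{1},U_{2}$ of $e$ in $G$ such that $f(U_{1}\times U_{2})=U_{1}U_{2}\subset W$. Setting $V_{0}=U_{1}\cap U_{2}$ gives an open neighborhood of $e$ in $G$ with $V_{0}^{2}\subset U_{1}U_{2}\subset W$, so at this stage the containment requirement is already achieved; only symmetry remains.

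\smallskip

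Second, I would symmetrize $V_{0}$. By the second axiom, the inverse map $g\colon G\to G$ is continuous, and since $g\circ g=\mathrm{id}_{G}$ it is a self-homeomorphism of $G$. Hence $V_{0}^{-1}=g(V_{0})$ is open in $G$, and it contains $e$ because $e^{-1}=e$. Define $V=V_{0}\cap V_{0}^{-1}$. Then $V$ is open in $G$, $e\in V$, and $V^{-1}=V_{0}^{-1}\cap V_{0}=V$, so $V$ is symmetric. Finally $V^{2}\subset V_{0}^{2}\subset W$, which finishes the argument.

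\smallskip

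I do not expect any real obstacle: the proof is the standard ``shrink, then symmetrize'' trick, and none of the peculiarities of rough groups (for instance the fact that associativity only holds on $\overline{G}$, or that $V^{2}$ may fail to lie inside $G$) are used, because we only need the inclusion $V^{2}\subset W\subset\overline{G}$, and the codomain of $f$ is exactly $\overline{G}$. The one sentence worth spelling out carefully is why $V_{0}^{-1}$ is an open subset of $G$: this is the only step where the continuity of the inverse mapping, rather than the multiplication, is invoked.
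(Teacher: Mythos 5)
Your argument is sound, but be aware that the statement as quoted is not something the paper proves --- it is quoted from \cite{BIO2016} precisely in order to be refuted. Immediately after stating it, the authors observe that the original proof breaks down when $e\notin G$ and give an explicit counterexample: $U=\mathbb{Z}$ under addition, $G$ the set of odd integers (so $e=0\notin G$ and $\overline{G}=\mathbb{Z}$), with a topology in which the unique basic neighborhood of $0$ in $\overline{G}$ is $V=(\mathbb{Z}\setminus(G\cup\{2\}))\cup\{4k+1:k\in\omega\}$. Every open set containing $0$ contains $V$, hence contains $1$, so $2=1+1$ lies in the square of any such set, while $2\notin V$; taking $W=V$, no open set $V'$ containing $e$ with $(V')^{2}\subset W$ exists at all, symmetric or not. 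So the hypothesis $e\in G$ is not a harmless tacit assumption to be read into the statement: without it the proposition is simply false, and the point of failure is exactly the one you identified, namely that continuity of $f$ at $(e,e)$ is unavailable because $(e,e)\notin G\times G$.

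Once $e\in G$ is added, your proof coincides with the paper's own corrected proposition, stated right after the counterexample: use continuity of multiplication at $(e,e)$ to obtain $U_{1}U_{2}\subset W$, intersect, and symmetrize via the inversion homeomorphism of $G$. The only cosmetic difference is that the paper routes the symmetrization through its Theorem~\ref{t6} on neighborhood bases, whereas you verify directly that $V_{0}^{-1}$ is open and that $V=V_{0}\cap V_{0}^{-1}$ works; both are valid, and your version is more self-contained. Note also that your $V$ is open in $G$ rather than in $\overline{G}$, which is how the corrected statement is phrased.
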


However, we find that the proof of this result has a gap when $e\not\in G$. Indeed, we have the following counterexample.

\begin{example}
Let $U=\mathbb{Z}$ be the set of all integer number and let ($\ast$) be the usual addition. A classification of $U$ is $U/R$, where $$E_{1}=\{4k+1, 4k+2: k\in\mathbb{Z}\}\ \mbox{and}\ E_{2}=\{4k+3, 4k+4: k\in\mathbb{Z}\}.$$ Let $G=\{2k+1: k\in\mathbb{Z}\}$. Then $\overline{G}=\mathbb{Z}$ and $0\not\in G$. Now we endow $\overline{G}$ with a topology $\tau$ as follows:

\smallskip
For each $x\in \mathbb{Z}\setminus (G\cup\{0\})$, the point $x$ has a neighborhood base $\{\mathbb{Z}\}$;

\smallskip
For $x=0$, the point $0$ has a neighborhood base $\{(\mathbb{Z}\setminus (G\cup\{2\}))\cup\{4k+1: k\in\omega\}\}$;

\smallskip
For each $n\in\omega$, the point $2n+1$ has a neighborhood base $\{\{4k+2n+1: k\in\omega\}\}$, and the point $-2n-1$ has a neighborhood base $\{\{-4k-2n-1: k\in\omega\}\}$.

Clearly, $G$ is a topological rough group under the topology $\tau_{G}$. However, the neighborhood $V$ of $e$ in $\overline{G}$ is $(\mathbb{Z}\setminus (G\cup\{2\}))\cup\{4k+1: k\in\omega\}$ and $V\cap G=\{4k+1: k\in\omega\}$ are all not symmetric. Moreover, $2\in (V\cap G)^{2}$, but $2\not\in V$, hence $(V\cap G)^{2}\nsubseteq V$.
\end{example}

But we have the following proposition.

\begin{proposition}
Let $G$ be a topological rough group. If $e\in G$ and $U$ is an open neighborhood of $e$ in $\overline{G}$, then there
exists an open set $V$ in $G$ with $e\in V$ such that $V=V^{-1}$ and $V^{2}\subset U$.
\end{proposition}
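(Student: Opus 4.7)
The plan is to mimic the standard proof for topological groups, but carefully note that the hypothesis $e\in G$ is exactly what we need to apply continuity of the product mapping at the point $(e,e)$. Note that without $e\in G$, the point $(e,e)$ does not lie in the domain $G\times G$ of $f$, which is precisely the gap exploited by the preceding counterexample.

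First I would apply the continuity of $f\colon G\times G\to\overline{G}$ at the point $(e,e)$. Since $f(e,e)=e\cdot e=e\in U$ and $U$ is open in $\overline{G}$, there exist open neighborhoods $V_{1},V_{2}$ of $e$ in $G$ such that $V_{1}\times V_{2}\subset f^{-1}(U)$, that is, $V_{1}V_{2}\subset U$. Set $V_{0}=V_{1}\cap V_{2}$; this is an open neighborhood of $e$ in $G$ with $V_{0}^{2}\subset U$.

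Next I would symmetrize. The inverse mapping $g\colon G\to G$ is continuous by the definition of a topological rough group, and since $g\circ g$ is the identity on $G$, the map $g$ is a homeomorphism of $G$ onto itself. Hence $V_{0}^{-1}=g(V_{0})$ is open in $G$, and $e=e^{-1}\in V_{0}^{-1}$. Define $V=V_{0}\cap V_{0}^{-1}$. Then $V$ is an open neighborhood of $e$ in $G$, and if $x\in V$ then $x\in V_{0}$ and $x\in V_{0}^{-1}$, so $x^{-1}\in V_{0}^{-1}$ and $x^{-1}\in V_{0}$, giving $x^{-1}\in V$; hence $V=V^{-1}$. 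Finally, $V^{2}\subset V_{0}^{2}\subset U$, as required.

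There is really no hard step here once the definitions are unwound: the only subtlety, and the reason the result requires $e\in G$ rather than merely $e\in\overline{G}$, is that continuity of $f$ is assumed only on $G\times G$. Because $e\in G$ by hypothesis, $(e,e)\in G\times G$, so we may legitimately extract open neighborhoods $V_{1},V_{2}$ of $e$ \emph{inside $G$}. The product $V_{0}^{2}$ then automatically lands in $\overline{G}$ (not necessarily in $G$), but this is exactly what the conclusion $V^{2}\subset U$ demands, since $U\subset\overline{G}$.
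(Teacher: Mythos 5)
Your proof is correct and follows essentially the same route as the paper: continuity of $f$ at $(e,e)$ (legitimate precisely because $e\in G$) yields a basic box $V_{1}\times V_{2}\subset f^{-1}(U)$, and symmetrizing via the inverse homeomorphism gives the required $V$. The paper compresses the symmetrization step into a citation, whereas you spell it out; the substance is identical.
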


\begin{proof}
Since $e\in G$ and $f: G\times G\rightarrow \overline{G}$ is continuous, $f^{-1}(U)$ is open in $G\times G$ and $(e, e)\in f^{-1}(U)$. Therefore, it follows from Theorem~\ref{t6} that there exists a symmetric open neighborhood $V$ of $e$ in $G$ such that $V^{2}\subset U$.
\end{proof}

Recall that a topological space $X$ is said to be {\it homogeneous} if for every $\in X$ and every
$y\in X$, there exists a homeomorphism $f$ of $X$ onto itself such that $f(x)=y$.
It is well known that each topological group is a homogeneous space. However, there exists a topological rough group which is not a homogeneous space, see  Example~\ref{e0}. In \cite{BIO2016}, the authors proved the following proposition.

\begin{proposition}\cite[Proposition 13]{BIO2016}\label{p2}
Let $G$ be a topological rough group and fix $a\in G$. Then

\smallskip
(a) The mapping $L_{a}: G\rightarrow \overline{G}$ defined by $L_{a}(x)=ax$ is one-to-one and continuous, for every $x\in G$;

\smallskip
(b) The mapping $R_{a}: G\rightarrow \overline{G}$ defined by $R_{a}(x)=xa$ is one-to-one and continuous, for every $x\in G$;

\smallskip
(c) The mapping $f: G\rightarrow G$ defined by $f(x)=x^{-1}$ is a homeomorphism, for every $x\in G$.
\end{proposition}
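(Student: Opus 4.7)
The plan is to prove (a) and (b) in parallel via two ingredients: continuity obtained by factoring $L_a$ and $R_a$ as restrictions of the multiplication to a slice, and injectivity obtained from left/right cancellation based on the inverse axiom together with associativity in $\overline{G}$. Part (c) will then reduce to the observation that the rough inverse is its own set-theoretic inverse.

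For (a), I would first factor $L_a$ as the composition $G\xrightarrow{\iota_a} G\times G\xrightarrow{f}\overline{G}$, where $\iota_a(x)=(a,x)$. The slice map $\iota_a$ is continuous because its first coordinate is constant and its second is the identity of $G$, while $f$ is continuous by the definition of a topological rough group; hence $L_a$ is continuous. For injectivity, if $ax=ay$ in $\overline{G}$, I left-multiply by $a^{-1}\in G\subset\overline{G}$ and apply associativity in $\overline{G}$ to the triples $(a^{-1},a,x)$ and $(a^{-1},a,y)$ to obtain
\[
x \;=\; ex \;=\; (a^{-1}a)x \;=\; a^{-1}(ax) \;=\; a^{-1}(ay) \;=\; (a^{-1}a)y \;=\; ey \;=\; y.
\]
The argument for (b) is identical after replacing left multiplication by right multiplication by $a^{-1}$.

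For (c), the map $f(x)=x^{-1}$ is continuous by the definition of a topological rough group. Since the defining condition $xy=yx=e$ for a rough inverse is symmetric in $x$ and $y$, the element $x$ itself is a rough inverse of $x^{-1}$; a short uniqueness argument in $\overline{G}$ (if $y_1$ and $y_2$ both satisfy $y_ix=xy_i=e$, then $y_1=y_1e=y_1(xy_2)=(y_1x)y_2=ey_2=y_2$) shows the rough inverse is unique, so $(x^{-1})^{-1}=x$. Thus $f$ is a self-bijection with $f^{-1}=f$, and since $f$ is continuous so is $f^{-1}$; hence $f$ is a homeomorphism.

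The only real subtlety, and the point where I expect the reader-level obstacle to lie, is the bookkeeping forced by the fact that associativity holds only for triples in $\overline{G}$ rather than in $G$, and that $e$ lives in $\overline{G}$ (possibly outside $G$). Every product appearing in the cancellation argument — namely $a^{-1}a$, $ax$, $ay$, and the triples to which associativity is applied — has all its entries in $\overline{G}$, by $G\subset\overline{G}$ together with the closure axiom of the rough-group definition, so once this is checked no essential difficulty arises.
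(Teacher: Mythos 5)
Your argument is correct: continuity of $L_a$ and $R_a$ by factoring through the slice maps into $G\times G$, injectivity by cancellation using associativity on triples from $\overline{G}$ together with $a^{-1}a=e$ and $ex=x$, and part (c) via uniqueness of the rough inverse so that $f$ is a continuous involution. The paper itself gives no proof of this statement (it is quoted from Proposition~13 of the cited reference), but your verification is the standard one and correctly handles the only delicate points, namely that associativity and the identity law are only postulated on $\overline{G}$ and that $x^{-1}\in G$ is guaranteed by the inverse axiom.
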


Therefore, it is natural to pose the following question.

\begin{question}\label{q0}
When is a topological rough group a homogeneous space?
\end{question}

Next we give some partial answers to Question~\ref{q0}.

\begin{proposition}\label{p3}
Let $G$ be a topological rough group such that $G$ is open in $\overline{G}$ and $e\in G$, and let $\mathscr{B}$ be a family of open neighborhood base of $e$ in $G$. For each $g\in G$, put $$\mathscr{L}_{g}=\{(aU)\cap G: U\in\mathscr{B}\}\ \mbox{and}\ \mathscr{R}_{g}=\{(Ua)\cap G: U\in\mathscr{B}\}.$$
Then $\mathscr{L}_{g}$ and $\mathscr{R}_{g}$ are two families of neighborhood bases of $g$ in $G$.
\end{proposition}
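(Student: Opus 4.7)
The plan is to verify, for each $g\in G$, the two defining conditions of a neighborhood base: every member of $\mathscr{L}_g$ is an open neighborhood of $g$ in $G$, and every open neighborhood of $g$ in $G$ contains some member of $\mathscr{L}_g$. The argument for $\mathscr{R}_g$ is entirely symmetric, so I would focus on $\mathscr{L}_g$ (interpreting the $a$ in the statement as $g$, since the families are parameterized by $g$).

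For openness, fix $U\in\mathscr{B}$ and invoke the left translation $L_{g^{-1}}:G\rightarrow\overline{G}$, $L_{g^{-1}}(x)=g^{-1}x$, which is continuous by Proposition~\ref{p2}. The key identity is
$$(gU)\cap G \;=\; L_{g^{-1}}^{-1}(U),$$
whose two inclusions reduce to the associativity $g(g^{-1}x)=(gg^{-1})x=ex=x$ available in $\overline{G}$: if $x\in G$ satisfies $g^{-1}x\in U$, then $x=g(g^{-1}x)\in gU$, and conversely if $x=gu$ with $u\in U\subset G$, then $g^{-1}x=u\in U$. Since $G$ is open in $\overline{G}$, the set $U$, open in $G$, is also open in $\overline{G}$, so $L_{g^{-1}}^{-1}(U)$ is open in $G$; and $g=ge\in gU$ since $e\in U$, giving $g\in(gU)\cap G$.

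For the base property, take any open neighborhood $V$ of $g$ in $G$. Using again that $G$ is open in $\overline{G}$, $V$ is likewise open in $\overline{G}$, so continuity of $f:G\times G\rightarrow\overline{G}$ at the point $(g,e)$, together with $f(g,e)=g\in V$, yields open neighborhoods $W_1$ of $g$ and $W_2$ of $e$ in $G$ with $W_1W_2\subset V$. In particular $gW_2\subset V$; choosing $U\in\mathscr{B}$ with $U\subset W_2$ then gives $(gU)\cap G\subset gU\subset V$, as required.

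The case of $\mathscr{R}_g$ is handled dually, using the right translation $R_{g^{-1}}$ for openness and continuity of $f$ at $(e,g)$ for the base property. The main obstacle is largely bookkeeping rather than conceptual: because multiplication in a rough group lands in $\overline{G}$ rather than $G$, the set $gU$ need not be contained in $G$, so the hypothesis that $G$ is open in $\overline{G}$ is essential to pass cleanly between $\tau$ and $\tau_G$, and the hypothesis $e\in G$ is what allows expressions such as $g^{-1}x$ and $ge$ to appear inside the rough-group associativity that drives the key identity above.
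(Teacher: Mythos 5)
Your proof is correct and follows essentially the same route as the paper: continuity of the left translation $L_{g^{-1}}$ from Proposition~\ref{p2} handles the neighborhood property, and continuity of $f$ at $(g,e)$ handles the refinement property. The one refinement worth noting is that you prove the exact identity $(gU)\cap G=L_{g^{-1}}^{-1}(U)$ by direct computation with associativity in $\overline{G}$, whereas the paper first shrinks $U$ to a $V\in\mathscr{B}$ with $gV\subset G$ and uses injectivity of $L_{g^{-1}}$ to get $L_{g^{-1}}^{-1}(V)=gV$; your version is slightly cleaner and actually shows each member of $\mathscr{L}_{g}$ is open in $G$, not merely a neighborhood of $g$.
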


\begin{proof}
We prove that $\mathscr{L}_{g}$ is a family of neighborhood base of $g$ in $G$. The proof of $\mathscr{R}_{g}$ is similar.

Since the mapping $f: G\times G\rightarrow \overline{G}$ at point $(g, e)$ is continuous and $G$ is open in $\overline{G}$, it suffices to prove that each $gU\cap G$ is a neighborhood of $g$ in $G$ for each $U\in\mathscr{B}$. Take an arbitrary $U\in\mathscr{B}$. Then there exists $V\in\mathscr{B}$ such that $V\subset U$ and $gV\subset G$. We claim that $gV$ is open in $G$. Indeed, from Proposition~\ref{p2}, the mapping $L_{g^{-1}}$ is one-to-one continuous from $G$ to $\overline{G}$, then $L_{g^{-1}}^{-1}(V)$ is open in $G$ since $V$ is open in $G$ (thus in $\overline{G}$). Since $L_{g^{-1}}$ is an one-to-one mapping and $g V\subset G$, it follows that $L_{g^{-1}}^{-1}(V)=gV$, hence $gV$ is open in $G$. From $gV\subset gU$ and $gV\subset G$, it follows that $gU\cap G$ is a neighborhood of $g$ in $G$.
\end{proof}

\begin{proposition}\label{p8}
Let $G$ be a topological rough group such that $G$ is open in $\overline{G}$ and $e\in G$. For each $g\in G$ and $U\subset G$, if the interior of $gU\cap G$ is non-empty in $G$, then the interior of $U$ is also non-empty in $G$.
\end{proposition}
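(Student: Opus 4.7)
The plan is to exhibit a non-empty open subset of $U$ inside $G$ by pulling back a witness of the interior of $gU\cap G$ through the continuous left translation $L_g$.

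First, since the interior of $gU\cap G$ is non-empty in $G$, I would pick a non-empty open set $V$ in $G$ with $V\subset gU\cap G$, and then consider
\[
L_g^{-1}(V)=\{x\in G:gx\in V\}.
\]
By Proposition~\ref{p2}(a), $L_g:G\to\overline{G}$ is continuous, so $L_g^{-1}(V)$ is open in $G$. To see that it is non-empty, fix any $v\in V\subset gU$; then $v=gu$ for some $u\in U\subset G$, and $u\in L_g^{-1}(V)$.

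The main step is to check the inclusion $L_g^{-1}(V)\subset U$, so that the open set $L_g^{-1}(V)$ witnesses that $\mathrm{Int}(U)\neq\emptyset$ in $G$. If $x\in L_g^{-1}(V)$, then $gx\in V\subset gU$, so $gx=gu'$ for some $u'\in U$. Since $e\in G$ gives $g^{-1}\in G$ and every element involved ($g^{-1}$, $g$, $x$, $u'$) lies in $\overline{G}$, the associativity clause in the definition of a rough group together with $g^{-1}g=e$ and $ex=x$, $eu'=u'$ yields
\[
x=ex=(g^{-1}g)x=g^{-1}(gx)=g^{-1}(gu')=(g^{-1}g)u'=eu'=u'\in U.
\]
Hence $L_g^{-1}(V)\subset U$, so $\mathrm{Int}(U)\neq\emptyset$ in $G$.

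The only delicate point is the cancellation step, which is precisely where the hypotheses $e\in G$ and $G$ open in $\overline{G}$ are used: without $e\in G$ we would not have $g^{-1}\in G$ available to cancel, and without $G$ being open in $\overline{G}$ the set $L_g^{-1}(V)$ might fail to be an open subset of $G$ in the induced topology (one has to know that the open set $V$ of $G$ is also open in $\overline{G}$ so that continuity of $L_g$ can be applied to $V$). These two hypotheses having been secured, the argument reduces to the routine algebraic cancellation shown above.
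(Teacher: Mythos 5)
Your proof is correct and follows essentially the same route as the paper's: both exploit continuity of left translation by $g$ (the paper via the multiplication map $f$ at the point $(g,u)$, you via the map $L_{g}$ of Proposition~\ref{p2}) to produce a non-empty open subset of $G$ contained in $U$, and then cancel $g$ by associativity. One small aside: your claim that $e\in G$ is what guarantees $g^{-1}\in G$ is not quite accurate---every $g\in G$ has its rough inverse in $G$ directly by the definition of a rough group---but this does not affect the validity of the argument.
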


\begin{proof}
Let $w\in \mbox{Int}(gU\cap G)\subset G$. Then there exists a point $u\in U$ such that $w=gu$. Since the multiplication is continuous and $G$ is open in $\overline{G}$, there exists an open neighborhood $V$ of $u$ in $G$ such that $gV\subset \mbox{Int}(gU\cap G)$, hence $gV\subset gU$, which implies that $V\subset U$ since $V=g^{-1}gV\subset g^{-1}g U=U$. Hence the interior of $U$ is also non-empty in $G$.
\end{proof}

\begin{proposition}
Let $G$ be a topological rough group such that $G$ is open in $\overline{G}$ and $e\in G$. If $A$ is a subset of $G$ and $U$ an open neighborhood of $e$ in $G$. Then both the sets $AU\cap G$ and $UA\cap G$ are the neighborhoods of $A$ in $G$.
\end{proposition}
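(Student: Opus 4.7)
The plan is to reduce the statement to a pointwise application of Proposition~\ref{p3}. By definition, $AU\cap G$ (resp.\ $UA\cap G$) is a neighborhood of $A$ in $G$ precisely when it is a neighborhood in $G$ of each of its points $a\in A$, so it suffices to produce, for every $a\in A$, an open set in $G$ containing $a$ and sitting inside $AU\cap G$ (resp.\ $UA\cap G$).

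Fix $a\in A$, and let $\mathscr{B}$ be an open neighborhood base of $e$ in $G$ (which exists since $e\in G$). Because $U$ is an open neighborhood of $e$ in $G$, there is some $W\in\mathscr{B}$ with $W\subset U$. Proposition~\ref{p3}, applied with the point $g=a$ (here we use both hypotheses that $G$ is open in $\overline{G}$ and that $e\in G$), asserts that the family $\mathscr{L}_{a}=\{(aW')\cap G:W'\in\mathscr{B}\}$ is a neighborhood base of $a$ in $G$. In particular, $(aW)\cap G$ is a neighborhood of $a$ in $G$, and since $W\subset U$ and $\{a\}\subset A$ we get
\[
(aW)\cap G\;\subset\;(aU)\cap G\;\subset\;AU\cap G .
\]
Thus $AU\cap G$ is a neighborhood of $a$ in $G$, and as $a\in A$ was arbitrary, $AU\cap G$ is a neighborhood of $A$ in $G$.

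For the set $UA\cap G$ one repeats the argument verbatim with the family $\mathscr{R}_{a}=\{(W'a)\cap G:W'\in\mathscr{B}\}$, which by Proposition~\ref{p3} is also a neighborhood base of $a$ in $G$; picking $W\in\mathscr{B}$ with $W\subset U$ gives $(Wa)\cap G\subset UA\cap G$, so $UA\cap G$ is a neighborhood of each $a\in A$ in $G$.

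The only real content in the argument is already encapsulated in Proposition~\ref{p3}, whose proof in turn depends crucially on $G$ being open in $\overline{G}$ and on $e\in G$ (so that $L_{a^{-1}}^{-1}(V)=aV$ can be identified as open in $G$). Hence no extra obstacle arises here; the present proposition is essentially the set-theoretic union of the pointwise statements supplied by Proposition~\ref{p3}.
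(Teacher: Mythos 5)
Your proof is correct and follows essentially the same route as the paper: both arguments apply Proposition~\ref{p3} pointwise at each $a\in A$ to produce a neighborhood of $a$ of the form $(aW)\cap G$ (with $W\subset U$ taken from the base at $e$) contained in $AU\cap G$, and then conclude by taking the union over $a\in A$; the case $UA\cap G$ is handled symmetrically in both.
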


\begin{proof}
By Proposition~\ref{p3}, for each $g\in A$ there exists an open neighborhood $V_{g}$ of $e$ such that $gV_{g}$ is a neighborhood of $g$ and $gV_{g}\subset (G\cap gU)$. Hence $\bigcup_{g\in A}gV_{g}\subset AU\cap G$, that is, $AU\cap G$ a neighborhood of $A$ in $G$. A similar argument applies in the case $UA\cap G$.
\end{proof}

If $(G, \sigma)$ is a topological group and $A\subset G$ a subset of $G$, then the closure of $A$ in $G$ is precise the set $\bigcap\{AU: U\in\sigma(e)\}$, where $\sigma(e)$ is the set of all neighborhoods of $e$ in $G$, see \cite[Theorem 1.4.5]{AA}. For topological rough groups, we have the following proposition.

\begin{proposition}\label{p4}
Let $G$ be a topological rough group such that $G$ is open in $\overline{G}$ and $e\in G$. If $A$ is a subset of $G$, then $A^{c}=\bigcap\{AU: U\in\mathscr{B}\}$, where $\mathscr{B}$ is a family of open neighborhood base of $e$ in $G$ such that each element of $\mathscr{B}$ is symmetric.
\end{proposition}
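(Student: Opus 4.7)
The plan is to reduce the statement to a routine application of Proposition~\ref{p3}, which provides the neighborhood base $\{xU\cap G:U\in\mathscr{B}\}$ at any point $x\in G$. Because every $U\in\mathscr{B}$ is assumed symmetric, the assertion ``$xU\cap G$ meets $A$'' will translate directly into ``$x\in AU$'', and this translation will be the bridge between the definition of the closure and the prescribed intersection.

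For the inclusion $A^{c}\subseteq\bigcap\{AU:U\in\mathscr{B}\}$, I would fix $x\in A^{c}$ and an arbitrary $U\in\mathscr{B}$. By Proposition~\ref{p3}, the set $xU\cap G$ is an open neighborhood of $x$ in $G$ and therefore meets $A$; choosing $a\in A$ and $u\in U$ with $a=xu$ and using $U=U^{-1}$, one obtains $x=au^{-1}\in AU^{-1}=AU$, as required.

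For the reverse inclusion, I would take any $x$ in $\bigcap\{AU:U\in\mathscr{B}\}$ (which, as explained below, should be viewed inside $G$) and any open neighborhood $W$ of $x$ in $G$. Proposition~\ref{p3} furnishes some $U\in\mathscr{B}$ with $xU\cap G\subseteq W$. Writing $x=au$ with $a\in A$ and $u\in U$, symmetry of $U$ yields $a=xu^{-1}\in xU$; since also $a\in A\subseteq G$, this places $a$ in $xU\cap G\cap A\subseteq W\cap A$. Hence every neighborhood of $x$ in $G$ meets $A$, and so $x\in A^{c}$.

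The main obstacle is bookkeeping rather than topology. On the one hand, the two hypotheses $e\in G$ and ``$G$ open in $\overline{G}$'' are needed precisely in order to invoke Proposition~\ref{p3} at the points that arise, so they cannot be dispensed with. On the other hand, since in general $AU\subseteq\overline{G}$, the intersection $\bigcap\{AU:U\in\mathscr{B}\}$ is a priori a subset of $\overline{G}$ and may be slightly larger than $A^{c}\subseteq G$; the stated equality is therefore best read after intersecting with $G$, so that the reverse containment is literally $G\cap\bigcap\{AU:U\in\mathscr{B}\}\subseteq A^{c}$. Once this convention is adopted, the proof is the classical topological-group argument of \cite[Theorem~1.4.5]{AA} transplanted to the rough setting via Proposition~\ref{p3}.
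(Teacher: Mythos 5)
Your proof is correct and follows essentially the same route as the paper's: both inclusions reduce to Proposition~\ref{p3} together with the symmetry of the elements of $\mathscr{B}$, the only (immaterial) difference being that you argue the reverse inclusion directly while the paper argues it by contraposition. Your observation that $\bigcap\{AU:U\in\mathscr{B}\}$ is a priori a subset of $\overline{G}$ and should be intersected with $G$ for the stated equality to be literal is a fair point that the paper's own proof also glosses over.
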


\begin{proof}
We first prove that $A^{c}\subset AU$ for each $U\in \mathscr{U}$. Take an arbitrary $g\in A^{c}$. By Proposition~\ref{p3}, $gU\cap G$ is a neighborhood of $g$ in $G$, hence $gU\cap A\neq\emptyset$, then $g\in AU^{-1}=AU$. Thus $A^{c}\subset AU$. Next we prove that $\bigcap\{AU: U\in\mathscr{B}\}\subset A^{c}$.

Indeed, it suffices to verify that if $g$ is not in $A^{c}$ then there exists $U\in \mathscr{B}$ such that $g\not\in AU$. Since $g\not\in A^{c}$, it follows from Proposition~\ref{p3} that there exists $U\in\mathscr{B}$ such that $gU\cap A=\emptyset$, then $g\not\in AU^{-1}=AU$. Therefore, $A^{c}\subset AU$.
\end{proof}

\begin{remark}
(1) In Proposition~\ref{p4}, the conditions ``$G$ is an open subset'' cannot be omitted. Indeed, let $A=\{2, 6\}$ in Example~\ref{e2}. Then $A^{c}=\{2, 5, 6, 9\}\neq A$ and $\bigcap \{AU: U\ \mbox{is an open neighborhood of}\ e\ \mbox{in}\ G\}=A\{1\}=A$.

\smallskip
(2) If a topological rough group $G$ satisfies the conditions in Proposition~\ref{p3}, then $G$ is locally homeomorphic. Moreover, from the definition of topological rough group and Proposition~\ref{p3}, it is easy to see the following theorem. We leave the proof to the reader.
\end{remark}

\begin{theorem}\label{t5}
Let $G$ be a topological rough group such that $G$ is open in $\overline{G}$ and $e\in G$, and let $\mathscr{B}$ be a family of open neighborhood base of $e$ in $G$. Then:

\smallskip
(i) for each $U\in\mathscr{B}$, there is an element $V\in\mathscr{B}$ such that $V^{2}\subset U$;

\smallskip
(ii) for each $U\in\mathscr{B}$, there is an element $V\in\mathscr{B}$ such that $V^{-1}\subset U$;

\smallskip
(iii) for each $U\in\mathscr{B}$ and $x\in U$, there is an element $V\in\mathscr{B}$ such that $Vx\subset U$;

\smallskip
(iv) for each $U\in\mathscr{B}$ and $g\in G$, there is an element $V\in\mathscr{B}$ such that $gVg^{-1}\subset U$;

\smallskip
(v) for any $U, V\in\mathscr{B}$, there is an element $W\in\mathscr{B}$ such that $W\subset U\cap V$.
\end{theorem}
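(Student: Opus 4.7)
The plan is to exploit the continuity of the multiplication $f\colon G\times G\to\overline{G}$ and inverse map, together with the assumption that $G$ is open in $\overline{G}$ (so that ``open in $G$'' coincides with ``open in $\overline{G}$'' for subsets of $G$, and partial compositions that stay in $G$ are well-behaved). Each clause reduces to finding an open neighborhood of $e$ in $G$ contained in the preimage of $U$ under a certain continuous map, and then invoking the neighborhood-base property of $\mathscr{B}$.

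For (i), I would start from $f(e,e)=ee=e\in U$; continuity of $f$ at $(e,e)$ yields open sets $V_1,V_2\in\tau_G(e)$ with $V_1V_2\subset U$, and then the base property gives $V\in\mathscr{B}$ with $V\subset V_1\cap V_2$, so $V^2\subset U$. For (ii), the inverse mapping $g\mapsto g^{-1}$ is continuous at $e$ with $e^{-1}=e\in U$, so its preimage of $U\cap G$ is an open neighborhood of $e$ in $G$, which contains some $V\in\mathscr{B}$; then $V^{-1}\subset U$. For (iii), I would apply Proposition~\ref{p2}(b): $R_x\colon G\to\overline{G}$ is continuous and $R_x(e)=ex=x\in U$, hence $R_x^{-1}(U)$ is an open neighborhood of $e$ in $G$, and any $V\in\mathscr{B}$ with $V\subset R_x^{-1}(U)$ satisfies $Vx\subset U$. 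Clause (v) is immediate: $U\cap V$ is open in $G$ and contains $e$, so the base $\mathscr{B}$ provides $W\subset U\cap V$.

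The main obstacle is (iv), because the conjugation $y\mapsto gyg^{-1}$ is not directly given as a single continuous map in the definition of a topological rough group; one has only the partial multiplication $f\colon G\times G\to\overline{G}$, whose codomain is $\overline{G}$ (not $G$), so a naive composition of $L_g$ followed by $R_{g^{-1}}$ may fail to land in $G$ where $R_{g^{-1}}$ is defined. Here the hypothesis that $G$ is open in $\overline{G}$ is essential: since $L_g(e)=g\in G$ and $L_g$ is continuous by Proposition~\ref{p2}(a), the set $V_0=L_g^{-1}(G)$ is an open neighborhood of $e$ in $G$; on $V_0$ the composition $R_{g^{-1}}\circ L_g$ is a well-defined continuous map into $\overline{G}$ sending $e$ to $gg^{-1}=e\in U$. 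Since $U$ is open in $G$ and $G$ is open in $\overline{G}$, $U$ is open in $\overline{G}$, so the preimage $(R_{g^{-1}}\circ L_g)^{-1}(U)\cap V_0$ is an open neighborhood of $e$ in $G$; picking $V\in\mathscr{B}$ inside it yields $gVg^{-1}\subset U$.

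In all five parts the pattern is the same: continuity gives an open neighborhood of $e$ in $G$ inside a suitable preimage, and membership in the base $\mathscr{B}$ is then automatic. The openness of $G$ in $\overline{G}$ is used only (but crucially) in (iii) and (iv) to ensure that intermediate products remain inside $G$ and that sets open in $G$ can be treated as open in $\overline{G}$ when invoking continuity of $f$.
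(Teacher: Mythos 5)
Your proof is correct, and it supplies exactly the argument the paper omits: the authors state only that the theorem ``follows from the definition of topological rough group and Proposition~\ref{p3}'' and leave the details to the reader, and your use of continuity of $f$ and of the translations $L_g$, $R_a$ from Proposition~\ref{p2}, with the openness of $G$ in $\overline{G}$ guaranteeing that intermediate products such as $L_g(V_0)$ stay inside $G$, is the intended route. One small correction to your closing remark: the hypothesis that $G$ is open in $\overline{G}$ is also needed in (i), since $U$ must be open in $\overline{G}$ (not merely in $G$) for $f^{-1}(U)$ to be open and for the conclusion $V_1V_2\subset U$ rather than just $V_1V_2\subset W$ with $W\cap G=U$; your argument for (i) does use this correctly, so only the summary sentence is off.
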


\begin{remark}
 (1) For any a group $U$ and any symmetric subset $G$ of $U$, if for an arbitrary topology $\tau$ on $U$ such that each point of $G\cup\{e\}$ is isolated point in $U$ and $G^{2}\subset \overline{G}$, then $G\cup \{e\}$ is a topological rough group and satisfies the conditions in Theorem~\ref{t5}; in particular, any finite topological rough groups $G$ with $\overline{G}$ being $T_{1}$ satisfy the conditions in Theorem~\ref{t5}.

(2) In Theorem~\ref{t5}, the condition ``$e\in G$'' cannot omit. Indeed, the topological rough group $G$ in Example~\ref{e1} is open in $\overline{G}$; however, there does not exists a neighborhood base which satisfy the conditions in Theorem~\ref{t5}.
\end{remark}

\begin{proposition}
Let $G$ be a topological rough group. Then we have the following properties:

\smallskip
(1) If $e\in G$, then for each open neighborhood $U$ of $e$ in $G$, there exists a symmetric open neighborhood $V$ of $e$ in $G$ such that $V^{2}\cap G\subset U$.

\smallskip
(2) If $\{e\}$ is open in $\overline{G}$, then $G$ is a discrete space.
\end{proposition}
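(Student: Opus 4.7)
The plan for (1) is to exploit continuity of the multiplication at $(e,e)$ directly. Since $e\in G$, we have $f(e,e)=ee=e\in U$; writing $U=U'\cap G$ for some open $U'$ in $\overline{G}$, continuity of $f\colon G\times G\to \overline{G}$ at $(e,e)$ yields open neighborhoods $V_{1},V_{2}$ of $e$ in $G$ with $V_{1}V_{2}\subset U'$. To symmetrize, I would set $V=V_{1}\cap V_{2}\cap V_{1}^{-1}\cap V_{2}^{-1}$; this is open in $G$ because the inverse mapping is a homeomorphism of $G$ (Proposition~\ref{p2}(c)), it contains $e$ since $e^{-1}=e$, and $V^{-1}=V$ by construction. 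Then $V^{2}\subset V_{1}V_{2}\subset U'$, whence $V^{2}\cap G\subset U'\cap G=U$, as required.

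The plan for (2) is to show that every singleton $\{x\}$ with $x\in G$ is open in $G$. The pivotal observation is the shape of the preimage $D:=f^{-1}(\{e\})=\{(y,y^{-1}):y\in G\}$, which is open in $G\times G$ by continuity of $f$ together with the hypothesis that $\{e\}$ is open in $\overline{G}$. Fix $x\in G$; since $(x,x^{-1})\in D$, there exist open neighborhoods $V_{x}$ of $x$ and $W_{x}$ of $x^{-1}$ in $G$ with $V_{x}\times W_{x}\subset D$. Picking any $w\in W_{x}$, every $v\in V_{x}$ must satisfy $(v,w)=(y,y^{-1})$ for some $y\in G$, forcing $v=y$ and $w=v^{-1}$, hence $v=w^{-1}$. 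Thus $V_{x}\subset\{w^{-1}\}$, and consequently $V_{x}=\{x\}$, so $\{x\}$ is open in $G$, proving that $G$ is discrete.

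I do not anticipate any substantive obstacle: both parts reduce to routine applications of the continuity axioms in the definition of a topological rough group, together with the fact (Proposition~\ref{p2}(c)) that the inverse is a homeomorphism of $G$. The only point requiring care in (1) is that $f$ takes values in $\overline{G}$ rather than in $G$, so $V^{2}$ need not be contained in $G$; this is exactly why the conclusion must be stated as $V^{2}\cap G\subset U$ rather than $V^{2}\subset U$. No analogous subtlety arises in (2), where the hypothesis that $\{e\}$ is open in the larger space $\overline{G}$ is already what makes the entire argument go through without reference to $G$ being open in $\overline{G}$ or $e$ lying in $G$.
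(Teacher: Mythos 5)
Your proposal is correct and follows essentially the same route as the paper: part (1) is the standard continuity-at-$(e,e)$ argument with explicit symmetrization via $V_{1}\cap V_{2}\cap V_{1}^{-1}\cap V_{2}^{-1}$, and part (2) amounts to the paper's observation that a basic neighborhood of $(x,x^{-1})$ inside $f^{-1}(\{e\})$ forces the first factor to be a singleton, using uniqueness of rough inverses (your identification $f^{-1}(\{e\})=\{(y,y^{-1}):y\in G\}$ is justified by associativity in $\overline{G}$). No gaps.
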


\begin{proof}
(1) Take an arbitrary open neighborhood $U$ of $e$ in $G$. Then there exists an open neighborhood $W$ of $e$ in $\overline{G}$ such that $U=W\cap G$. Since $f: G\times G\rightarrow \overline{G}$ is continuous at point $(e, e)$ and the inverse mapping is a homeomorphism, there exists a symmetric open neighborhood $V$ of $e$ in $G$ such that $V^{2}\subset W$, hence $V^{2}\cap G\subset U$.

(2) It suffice to prove that $\{g\}$ is open in $G$ for any $g\in G$. Take an arbitrary $g\in G$. Since $f: G\times G\rightarrow \overline{G}$ is continuous at point $(g, g^{-1})$, there exists an open neighborhood $V$ of $g$ in $G$ such that $VV^{-1}\subset \{e\}$, then $VV^{-1}=\{e\}$ because $VV^{-1}\neq\emptyset$. By the uniqueness of the rough inverse element of each element of $G$, we have $V=\{g\}$. Therefore, $G$ is discrete.
\end{proof}

Finally, we discuss the extremally disconnected topological rough group. We recall that a space $X$ is {\it extremally disconnected} if the closure of any open subset of $X$ is open. The following theorem is important in our proof.

\begin{theorem}\cite{F1967}\label{t10}
Let $X$ be an extremally disconnected Hausdorff space,
and let $h$ be a homeomorphism of $X$ onto itself. Then the set $M=\{x\in X: h(x)=x\}$ of all fixed
points of $h$ is an open and closed subset of $X$.
\end{theorem}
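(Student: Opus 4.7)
The plan is to show that $M$ is both closed and open in $X$. Closedness will be automatic: since $X$ is Hausdorff and both $h$ and the identity are continuous, for any $x \in X \setminus M$ the points $x$ and $h(x)$ can be separated by disjoint open neighborhoods, and pulling back the neighborhood of $h(x)$ through $h$ and intersecting with the neighborhood of $x$ produces an open neighborhood of $x$ disjoint from $M$. So the real work is to prove that the open set $U := X \setminus M$ is closed, hence clopen.

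First I would fix, for each $x \in U$, an open set $V_x \ni x$ with $V_x \cap h(V_x) = \emptyset$ (chosen via Hausdorffness applied to $x, h(x)$ and the continuity of $h$). Let $\mathcal{F}$ denote the family of all open $V \subseteq X$ with $V \cap h(V) = \emptyset$, ordered by inclusion. This family is closed under unions of chains, so Zorn's lemma furnishes a maximal element $V \in \mathcal{F}$. Next I would invoke extremal disconnectedness: $V^c$ is clopen, and since $h$ is a homeomorphism, so are $h(V^c) = h(V)^c$ and $h^{-1}(V^c) = h^{-1}(V)^c$. The disjointness $V \cap h(V) = \emptyset$ propagates to the closures: because $h(V)$ is open, no point of $V^c$ can lie in $h(V)$, so $V^c \cap h(V) = \emptyset$, and by symmetry $V \cap h(V^c) = \emptyset$; combining these with the clopenness of $h(V^c)$ yields $V^c \cap h(V^c) = \emptyset$. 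Applying $h^{-1}$ to this identity, one also obtains $V^c \cap h^{-1}(V^c) = \emptyset$.

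The heart of the argument is the claim $U = V^c \cup h(V^c) \cup h^{-1}(V^c)$, which concludes the proof since the right-hand side is a finite union of clopen sets. The inclusion $\supseteq$ is quick: a fixed point in $V^c$ would also lie in $h(V^c)$, contradicting the disjointness just established, and the argument for $h(V^c)$ and $h^{-1}(V^c)$ is analogous. For the inclusion $\subseteq$, I would argue by contradiction: suppose $x_0 \in U$ avoids all three clopen sets. Using $h(x_0) \neq x_0$ together with continuity of $h$, pick an open neighborhood $N$ of $x_0$ with $N \cap h(N) = \emptyset$ and $N$ disjoint from each of $V^c$, $h(V^c)$, $h^{-1}(V^c)$. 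Then $V \cup N$ properly extends $V$ and still lies in $\mathcal{F}$: the cross-term $N \cap h(V)$ is contained in $N \cap h(V^c) = \emptyset$, while $V \cap h(N) \subseteq V^c \cap h(N)$ vanishes because $N \cap h^{-1}(V^c) = \emptyset$ forces $h(N) \cap V^c = \emptyset$. This contradicts maximality.

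The hard part will be this last inclusion $U \subseteq V^c \cup h(V^c) \cup h^{-1}(V^c)$, and more precisely, recognizing that one needs \emph{both} of the translates $h(V^c)$ and $h^{-1}(V^c)$---not just $V^c$ alone---to make the maximality argument go through: the cross-term calculation requires $N$ to avoid $h^{-1}(V^c)$ in order to rule out $V \cap h(N)$, and to avoid $h(V^c)$ in order to rule out $N \cap h(V)$. The rest is bookkeeping, resting on Hausdorffness (to separate $x$ from $h(x)$ at every non-fixed point) and on extremal disconnectedness (to upgrade the open $V$ to a clopen $V^c$ that still behaves well under $h$).
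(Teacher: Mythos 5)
Your proposal is correct and complete: the Zorn's-lemma construction of a maximal open $V$ with $V\cap h(V)=\emptyset$, the use of extremal disconnectedness to upgrade this disjointness to the clopen closures, and the resulting identity $X\setminus M=V^{c}\cup h(V^{c})\cup h^{-1}(V^{c})$ constitute exactly the classical Frol\'{\i}k argument. The paper itself gives no proof of this statement (it is quoted from the cited reference), so there is nothing to compare against beyond noting that your reconstruction is the standard one.
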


\begin{theorem}\label{t8}
Let $G$ be a Hausdorff extremally disconnected topological rough group. If $G$ is open in $\overline{G}$ and $e\in G$, then there exists an open and closed Abelian neighborhood $O$ of $e$ in $G$ such that $a^{2}=e$, for each $a\in O$.
\end{theorem}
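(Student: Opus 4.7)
My plan is to apply Frol\'{\i}k's theorem (Theorem~\ref{t10}) to the inverse map $g\colon G\to G$, which is a homeomorphism by Proposition~\ref{p2}(c). Since $G$ is Hausdorff and extremally disconnected, the fixed-point set
\[
M=\{x\in G : x^{-1}=x\}
\]
is open and closed in $G$, and clearly $e\in M$. This is the set on which the desired identity $a^{2}=e$ automatically holds.

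Next I would extract a symmetric open neighborhood of $e$ whose square still lies in $M$. Because $G$ is open in $\overline{G}$ and $e\in G$, Theorem~\ref{t5} applies at $e$; combining items (i) and (ii) gives a symmetric $V\in\mathscr{B}$ with $V^{2}\subset M$. Since $e\in V$, we also have $V\subset V^{2}\subset M$. For $a\in V$, then $a=a^{-1}$, hence $a^{2}=aa^{-1}=e$. For $a,b\in V$, the product $ab$ lies in $V^{2}\subset M$, so $(ab)^{-1}=ab$, and then
\[
ab=(ab)^{-1}=b^{-1}a^{-1}=ba,
\]
so that $V$ is already an open symmetric neighborhood of $e$ on which $a^{2}=e$ and any two elements commute.

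To upgrade $V$ to the required open and closed neighborhood, I would set $O:=V^{c}$, the closure of $V$ in $G$. Extremal disconnectedness of $G$ makes $O$ open, and $O$ is closed by construction, so $O$ is clopen and $e\in V\subset O$. Because $M$ is closed in $G$ and contains $V$, we have $O\subset M$; consequently $a^{2}=e$ for every $a\in O$.

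The main obstacle will be transferring the Abelian property from $V$ to $O$. I plan to use a net argument: for $a,b\in O$, choose nets $a_{\alpha},b_{\beta}\in V$ converging to $a$ and $b$ respectively. Continuity of $f\colon G\times G\to \overline{G}$ yields $a_{\alpha}b_{\beta}\to ab$ and $b_{\beta}a_{\alpha}\to ba$ in $\overline{G}$, while the two nets coincide at every index since $V$ is Abelian. The delicate point is to guarantee that both candidate limits $ab$ and $ba$ sit inside the Hausdorff subspace $G$, so that uniqueness of limits in $G$ forces $ab=ba$; this is arranged by shrinking $V$ once more by Theorem~\ref{t5}(i) so that $V^{2}\cdot V^{2}\subset M\subset G$, which confines the products in question to $G$ and lets the Hausdorff property of $G$ do the rest. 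With this step verified, $O=V^{c}$ is the required open-and-closed Abelian neighborhood of $e$ in $G$ on which $a^{2}=e$ throughout.
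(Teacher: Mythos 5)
Your proposal is correct in substance and its first half coincides with the paper's proof: both apply Frol\'{\i}k's theorem (Theorem~\ref{t10}) to the inversion homeomorphism to obtain the clopen set $M=\{x\in G: x^{2}=e\}$, shrink to a symmetric open $V$ with $V^{2}\subset M$, and derive commutativity on $V$ from the order-two identities. Where you diverge is in pushing the Abelian property from $V$ to the clopen set $O=V^{c}$: you propose a net argument using continuity of multiplication and uniqueness of limits in the Hausdorff subspace $G$. That argument can be made to work, but note that the step confining the limits $ab$, $ba$ to $G$ for $a,b\in O$ silently needs $V^{c}\subset V^{2}$, which is exactly Proposition~\ref{p4} (with $A=V$ and the symmetric basic neighborhood $V$ itself). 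Once you invoke that containment, the net argument becomes superfluous: this is precisely the paper's shortcut. The paper chooses symmetric $W$, $V$ with $W^{2}\subset V\subset V^{2}\subset U$, observes $W^{c}\subset W^{2}\subset V$ by Proposition~\ref{p4}, and since $V$ is already shown to be Abelian with all squares equal to $e$, the clopen set $O=W^{c}$ (open by extremal disconnectedness) inherits both properties by mere inclusion --- no limits, and no need for the extra Hausdorff bookkeeping. In your setup the same simplification is available: after your final shrinking ($V^{4}\subset M$), the set $V^{2}$ is itself Abelian by the same algebraic computation, and $O=V^{c}\subset V^{2}$ finishes the proof. So your route is valid but takes a topological detour where a one-line containment suffices; the only genuine care your version demands in exchange is the (correctly identified) issue that $\overline{G}$ need not be Hausdorff, which the paper's argument never has to confront.
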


\begin{proof}
The mapping $h:G\rightarrow G$ defined by $h(g)=g^{-1}$ for each $g\in G$ is a
homeomorphism of $G$. It follows from Theorem~\ref{t10} that the set $U=\{g\in G: g^{2}=e\}$ is an open and closed  neighborhood of the rough identity element $e$. Since $e\in G$ and $G$ is open in $\overline{G}$, there exist symmetric open neighborhood $V$ and $W$ of $e$ in $G$ such that $W^{2}\subset V\subset V^{2}\subset U$. By Proposition~\ref{p4}, $W^{c}\subset W^{2}$. Since $G$ is extremally disconnected, $W^{c}$ is open in $G$. Clearly, $W^{c}$ is symmetric.

Next it suffices to prove that every two elements $a$ and $b$ of $V$ commute. Indeed, $abab=e$ since $ab\in U$. Now from $a^{2}=e$ and $b^{2}=e$ it follows that $ab=ba$.
Therefore, $V$ is Abelian, thus $W^{c}$ is Abelian. Now let $O=W^{c}$. It is just the requirement in our theorem.
\end{proof}

\begin{corollary}
Let $G$ be a Hausdorff extremally disconnected topological rough group. If $G$ is open in $\overline{G}$ and $e\in G$, then,
for every element $g$ of $G$ of order $2$, there exists an open neighborhood $V$ of the rough identity
element $e$ such that $g$ commutes with every element of $V \cup gV$.
\end{corollary}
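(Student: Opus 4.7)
The plan is to apply Theorem~\ref{t10} to the ``conjugation by $g$'' involution, carefully restricted to an open neighborhood of $e$ on which the relevant products remain inside $G$.

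First I would set up the conjugation. Since the multiplication $G\times G\to\overline{G}$ is continuous and $g\in G$, Proposition~\ref{p2} supplies continuous translations $L_g,R_g:G\to\overline{G}$. Put
\[
A=R_g^{-1}(G)=\{x\in G:xg\in G\},
\]
which is an open neighborhood of $e$ in $G$ because $G$ is open in $\overline{G}$ and $eg=g\in G$. Define $h_g:A\to\overline{G}$ by $h_g(x)=g(xg)=(gx)g$ (the two agree by associativity in $\overline{G}$); this is the composition $L_g\circ R_g$ restricted to $A$, so it is continuous, and $h_g(e)=g^2=e$.

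Next I would shrink $A$ to secure $h_g$-invariance. Let
\[
C=\{x\in A:gx\in G\ \text{and}\ h_g(x)\in G\}.
\]
Using continuity of $L_g$ and $h_g$ at $e$ together with $e\in G$ open in $\overline{G}$, $C$ is an open neighborhood of $e$. A direct associativity calculation together with $g^2=e$ shows $h_g\circ h_g=\mathrm{id}$ wherever defined, and that $h_g(C)\subset C$: for $x\in C$ the required conditions $h_g(x)\in G$, $g\cdot h_g(x)\in G$, $h_g(x)\cdot g\in G$, and $h_g(h_g(x))\in G$ collapse by associativity and $g^2=e$ to $xg\in G$, $xg\in G$, $gx\in G$, and $x\in G$, all guaranteed by $x\in C$. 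Thus $h_g:C\to C$ is a continuous involution, hence a homeomorphism.

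Since $C$ is an open subspace of $G$ it is Hausdorff and extremally disconnected, so Theorem~\ref{t10} yields that the fixed-point set
\[
M=\{x\in C:h_g(x)=x\}
\]
is an open (and closed) subset of $C$, and plainly $e\in M$. Right-multiplying the equation $g(xg)=x$ by $g$ and using $g^2=e$ rewrites it as $gx=xg$, so $M$ is precisely the local centralizer of $g$. Setting $V=M$ produces an open neighborhood of $e$ on which $g$ commutes with every element. For $u=gv\in gV$, associativity and $g^2=e$ give $gu=(gg)v=v$ and $ug=(gv)g=g(vg)=g(gv)=v$ (using $vg=gv$ from $v\in V$), hence $gu=ug$; so $g$ commutes with every element of $V\cup gV$.

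The main obstacle is the domain bookkeeping for $h_g$. In a topological rough group one cannot freely multiply arbitrary pairs in $\overline{G}$, and Proposition~\ref{p2} only supplies continuity of $L_g,R_g$ as maps $G\to\overline{G}$; so the conjugation map and its iterate must be confined to an open subset of $G$ that is genuinely $h_g$-invariant. Verifying that $C$ is $h_g$-invariant (so Theorem~\ref{t10} actually applies to a self-homeomorphism) and that $C$ inherits extremal disconnectedness as an open subspace are the delicate points; everything else is a short associativity manipulation.
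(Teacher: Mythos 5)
Your proof is correct, but it takes a genuinely different route from the paper's. The paper deduces the corollary from Theorem~\ref{t8}: there the set $U=\{a\in G: a^{2}=e\}$ is shown to be open (that is where Theorem~\ref{t10} enters, applied to the inversion homeomorphism), one then picks an open neighborhood $V$ of $e$ with $V\subset U$ and $gV\subset U$, and the purely algebraic observation that $g^{2}=a^{2}=(ga)^{2}=e$ forces $ga=ag$ finishes the argument in two lines; the $gV$ case is the same short computation you give. You instead invoke Theorem~\ref{t10} a second time, applied directly to conjugation $h_{g}(x)=gxg$ after restricting it to an $h_{g}$-invariant open neighborhood $C$ of $e$. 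Your domain bookkeeping is exactly the point where a naive ``conjugation is a homeomorphism of $G$'' argument would break down in a rough group, and your verifications --- that $C$ is open and $h_{g}$-invariant, that all the associativity steps only ever reassociate triples lying in $\overline{G}$, and that the open subspace $C$ inherits Hausdorffness and extremal disconnectedness --- all go through. The trade-off is that the paper's proof is shorter and reuses the already-established openness of the order-$2$ set, whereas yours does not need the elements of $V$ themselves to square to $e$ and is in effect a localized version of the paper's subsequent theorem on the centralizer $G_{g}$, whose global hypothesis $gG=Gg$ you avoid by using $g^{2}=e$ to make conjugation an involution of $C$.
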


\begin{proof}
By Theorem~\ref{t8}, the set $U$ of all elements of $G$ of order 2 is open in $G$. Take an arbitrary $g\in U$. Since $e\in G$ and $G$ is open in $\overline{G}$, there exists an open neighborhood $V$ of the rough identity
element $e$ such that $V\subset U$ and $gV\subset U$. Take any $a\in V$. Then $ga\in U$ and, therefore,
$gaga=e$. Since $g^{2}=e$ and $a^{2}=e$, it follows that $ga=ba$. Hence $g$ commutes with every
element of $V$.

Now, let $c\in gV$. Hence $c=gb$ for some $b\in V,$ and $$gc = ggb, cg= gbg=ggb.$$
Thus, $gc =cg$.
\end{proof}

\begin{theorem}
Let $G$ be a Hausdorff extremally disconnected topological rough group. For any $g\in G$, if $gG= Gg$, then the set $G_{g}=\{x\in G: xg=gx\}$ is an open and closed neighborhood of $g$.
\end{theorem}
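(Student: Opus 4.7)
The plan is to realize $G_g$ as the fixed-point set of a well-chosen self-homeomorphism of $G$ and then invoke Theorem~\ref{t10}. The natural candidate is the conjugation map $\pi\colon G\to G$ defined by $\pi(x)=gxg^{-1}$, i.e.\ the composition $L_g\circ R_{g^{-1}}$ understood as a map on $G$. The hypothesis $gG=Gg$ is exactly what is needed to make $\pi$ land in $G$: given $x\in G$, the element $gx$ lies in $gG=Gg$, so $gx=yg$ for a unique $y\in G$, and that $y$ equals $gxg^{-1}$ by associativity in $\overline{G}$. Applying $gG=Gg$ with $g^{-1}$ gives $g^{-1}G=Gg^{-1}$, and a symmetric argument produces a map $\pi'(x)=g^{-1}xg$ on $G$ which, by associativity and $gg^{-1}=g^{-1}g=e$, is a two-sided inverse of $\pi$. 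Hence $\pi$ is a bijection of $G$, and $\pi(x)=x$ reads $gxg^{-1}=x$, i.e.\ $gx=xg$, i.e.\ $x\in G_g$. In particular $g\in G_g$.

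It remains to show $\pi$ is continuous (then by symmetry $\pi^{-1}=\pi'$ is continuous, so $\pi$ is a self-homeomorphism). A short computation using associativity in $\overline{G}$ and $g^{-1}g=e$ gives $\pi(xy)=\pi(x)\pi(y)$ for all $x,y\in G$ and $\pi(e)=e$, so it suffices to check continuity of $\pi$ at $e$ and then transport it by the multiplicative identity (under the standing hypotheses $e\in G$ and $G$ open in $\overline{G}$ needed to apply the earlier tools). Continuity at $e$ is precisely Theorem~\ref{t5}(iv): for every neighborhood $U$ of $e$ in $G$ there is a neighborhood $V$ of $e$ with $gVg^{-1}\subset U$. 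To spread continuity from $e$ to an arbitrary $x_0\in G$, given a neighborhood $N$ of $\pi(x_0)=y_0$ in $G$ I use Proposition~\ref{p3} to find a neighborhood $W$ of $e$ with $y_0W\cap G\subset N$, then Theorem~\ref{t5}(iv) to get $\pi(U)\subset W$, and note that for $x=x_0u$ with $u\in U$ the multiplicativity gives $\pi(x)=y_0\pi(u)\in y_0W$, whence $\pi(x_0U\cap G)\subset y_0W\cap G\subset N$. Since $x_0U\cap G$ is a neighborhood of $x_0$ by Proposition~\ref{p3}, $\pi$ is continuous at $x_0$.

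With $\pi$ a self-homeomorphism of the Hausdorff extremally disconnected space $G$, Theorem~\ref{t10} guarantees that its fixed-point set is simultaneously open and closed in $G$. As noted, this fixed-point set is exactly $G_g$, and it contains $g$, so $G_g$ is an open and closed neighborhood of $g$. The main obstacle is the continuity of $\pi$: the multiplication is only continuous as a map $G\times G\to\overline{G}$, and the formal composition $L_g\circ R_{g^{-1}}$ exits $G$ at the intermediate step, so turning $\pi$ into an honest continuous self-map of $G$ requires both the local-translation statement Proposition~\ref{p3} and the uniform conjugation-at-identity statement Theorem~\ref{t5}(iv). Both of these rest on the hypotheses $e\in G$ and $G$ open in $\overline{G}$, which (as in the preceding Theorem~\ref{t8} and its corollary) appear to be in force throughout this portion of the paper.
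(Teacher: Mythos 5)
Your proposal follows exactly the same route as the paper: realize $G_{g}$ as the fixed-point set of the conjugation self-map of $G$ (the paper uses $\phi(x)=g^{-1}xg$, you use its inverse $\pi(x)=gxg^{-1}$; the fixed-point sets coincide) and then invoke Frol\'{\i}k's theorem, Theorem~\ref{t10}. So in substance the two arguments agree.

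Where you diverge is on the one point the paper waves away. The paper's entire justification that $\phi$ is a homeomorphism is ``it is easy to see,'' whereas you correctly observe that this is \emph{not} automatic in a topological rough group: multiplication is only continuous as a map $G\times G\to\overline{G}$, and the intermediate product $xg$ (or $gx$) generally leaves $G$, so conjugation is not a composition of continuous self-maps of $G$. Your patch --- multiplicativity of $\pi$ plus continuity at $e$ via Theorem~\ref{t5}(iv), transported to arbitrary points via Proposition~\ref{p3} --- is a reasonable way to close this, but note that both of those tools require $e\in G$ and $G$ open in $\overline{G}$, and unlike Theorem~\ref{t8} and its corollary, the statement of \emph{this} theorem does not include those hypotheses. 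So what you have actually proved is the theorem under the additional assumptions $e\in G$ and $G$ open in $\overline{G}$; under the hypotheses as literally stated, the continuity of the conjugation map remains unjustified in your write-up --- and, for that matter, in the paper's own proof. (A smaller point: your identity $\pi(xy)=\pi(x)\pi(y)$ should only be asserted for those $x,y$ with $xy\in G$, which is all you actually use; as stated ``for all $x,y\in G$'' it is not meaningful since $\pi$ is a self-map of $G$ and $xy$ need only lie in $\overline{G}$.)
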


\begin{proof}
Since $gG= Gg$, it is easy to see that the mapping $\phi(x)=g^{-1}xg$ is a homeomorphism of the space $G$ onto itself. Then it follows from Theorem~\ref{t10} that the set $F$ of all fixed points under $\phi$ is open and closed. It suffices to prove that $F=G_{g}$. Indeed, $\phi(x)=x$ if and only if $x=g^{-1}xg$ if and only if $gx=xg$.
\end{proof}

\begin{theorem}\label{t9}
Let $G$ be a Hausdorff extremally disconnected topological rough group. For any $g\in G$, if $gG= G$, then the set $M_{g}=\{x\in G: x^{2}=g\}$  is an open and closed set in $G$.
\end{theorem}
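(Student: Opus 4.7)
The plan is to realise $M_g$ as the fixed-point set of a self-homeomorphism of $G$ and then invoke Theorem~\ref{t10}. Define the map $h:G\to \overline{G}$ by $h(x)=gx^{-1}$. A direct computation shows that $h(x)=x$ if and only if $x=gx^{-1}$, and multiplying on the right by $x$ (using associativity in $\overline{G}$) this is equivalent to $x^{2}=g$. Hence the fixed-point set of $h$ is exactly $M_g$.

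The first technical step is to check that $h$ actually takes $G$ into $G$. The hypothesis $gG=G$ means that $L_g$ restricts to a bijection of $G$; in particular $gy\in G$ for every $y\in G$. Since the rough inverse $x^{-1}$ lies in $G$ for every $x\in G$, it follows that $h(x)=gx^{-1}\in G$. So $h$ is well-defined as a map $G\to G$.

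Next I verify that $h:G\to G$ is a homeomorphism. Continuity follows from Proposition~\ref{p2}: the inverse map $x\mapsto x^{-1}$ is a homeomorphism of $G$, and $L_g:G\to \overline{G}$ is continuous; since $L_g(G)\subseteq G$ and $G$ carries the subspace topology, $L_g$ is continuous as a map into $G$, so the composition $h=L_g\circ(\;)^{-1}$ is continuous. For bijectivity, the bijection $L_g:G\to G$ has an inverse given by $z\mapsto g^{-1}z\in G$ (in particular $g^{-1}G=G$), and composing with the inversion gives an explicit inverse $h^{-1}(z)=(g^{-1}z)^{-1}$, which is continuous by the same reasoning applied to $L_{g^{-1}}$ and the inverse map.

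With $h$ a self-homeomorphism of the Hausdorff extremally disconnected space $G$, Theorem~\ref{t10} asserts that its fixed-point set is open and closed in $G$; this fixed-point set is $M_g$, so we are done. The only potentially delicate point is ensuring $h$ really is a bijective self-homeomorphism of $G$ rather than merely a continuous map into $\overline{G}$; this is exactly where the hypothesis $gG=G$ is used, since without it $L_g$ need not restrict to $G$ and the argument collapses.
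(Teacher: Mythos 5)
Your proposal is correct and follows exactly the paper's argument: both realise $M_g$ as the fixed-point set of the self-homeomorphism $x\mapsto gx^{-1}$ and invoke Theorem~\ref{t10}. Your write-up is in fact more careful than the paper's, since you explicitly verify (using $gG=G$ and Proposition~\ref{p2}) that this map is a bijective self-homeomorphism of $G$, a point the paper dismisses as ``easy to see.''
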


\begin{proof}
Since $gG= G$, it is easy to see that the mapping $\phi(x)=gx^{-1}$ is a homeomorphism of the space $G$ onto itself. Then it follows from Theorem~\ref{t10} that the set $F$ of all fixed points under $\phi$ is open and closed. Hence
$F$ coincides with $M_{g}$. Indeed, for any $x\in G$, $x\in G$ if and only if $\phi(x)=x$ if and only if $x=gx^{-1}$ if and only if
$x^{2}=g$ if and only if $x\in M_{g}$.
\end{proof}

\begin{remark} It should be noted that if $G$ is an extremally disconnected Hausdorff topological rough group with $e\in G$ and $G$ being open in $\overline{G}$, then the set $L=\{x\in G: x^{3}=e\}$ need not be open in $G$. Indeed, assume that $L$ is open, hence $L$ is a
neighborhood of $e$; then $M_{e}\cap L$ is also an open neighborhood of
$e$ in $G$ (see Theorem~\ref{t9}). However, it is clear that $L\cap M_{e}=\{e\}$; thus $e$
is isolated in $G$, which implies that $G$ is discrete by Proposition~\ref{p3}.
\end{remark}

  \maketitle
\section{Topological Rough subgroups of topological rough groups}
In this section we discuss some properties of topological rough subgroups of topological rough groups. First, we prove the following proposition.

\begin{proposition}
Let $G$ be a topological rough group. If $e\in G$ and $G$ is open in $\overline{G}$, then $H=\bigcap\{U: U\in\tau_{G}(e)\}$ is a topological group.
\end{proposition}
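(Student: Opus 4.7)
My plan is to verify that $H$ is a group under the operation inherited from $\overline{G}$, and then that the subspace topology from $(G,\tau_{G})$ makes it a topological group. First, I would observe that since $\tau_{G}(e)$ is a neighbourhood base at $e$ in $G$, the set $H$ coincides with the intersection of \emph{all} open neighbourhoods of $e$ in $G$; equivalently, $x\in H$ if and only if $x$ belongs to every open neighbourhood of $e$ in $G$. In particular $e\in H$ and $H\subseteq G$, and associativity on $H$ is inherited from the rough group axiom on $\overline{G}$.

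The substantive step is to establish closure of $H$ under the group operations. For inverses, fix $x\in H$ and an arbitrary open neighbourhood $U$ of $e$ in $G$; by Proposition~\ref{p2}(c) the inverse is a self-homeomorphism of $G$, so $U^{-1}$ is again an open neighbourhood of $e$; hence $x\in U^{-1}$, i.e.\ $x^{-1}\in U$. Since $U$ was arbitrary, $x^{-1}\in H$. For the product, let $x,y\in H$ and let $U$ be an arbitrary open neighbourhood of $e$ in $G$. The assumptions $e\in G$ and $G$ open in $\overline{G}$ permit us to invoke Theorem~\ref{t5}(i), producing $V\in\tau_{G}(e)$ with $V^{2}\subseteq U$. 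Since $x,y\in V$, we get $xy\in V^{2}\subseteq U\subseteq G$; because $U$ was arbitrary this yields $xy\in H$, and in particular $xy\in G$, so the rough product restricts to a genuine binary operation $H\times H\to H$.

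Combined with the existence of rough inverses for every element of $G$, these closure properties make $H$ a classical group. For the topological structure, I would endow $H$ with the subspace topology inherited from $(G,\tau_{G})$. The multiplication $f\colon G\times G\to\overline{G}$ is continuous by hypothesis, and by the closure step its restriction takes $H\times H$ into $H$; continuity of $H\times H\to H$ then follows from the universal property of the subspace topology. Likewise, the inverse map, already a self-homeomorphism of $G$, restricts to a self-homeomorphism of $H$. Therefore $H$ is a topological group.

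The only delicate point is showing $xy\in H$, and specifically that the product even lands in $G$, since a priori it merely lies in $\overline{G}$. This is exactly where the standing hypotheses $e\in G$ and $G$ open in $\overline{G}$ enter: they are what allow Theorem~\ref{t5}(i) to supply the ``square-root'' neighbourhood $V$ with $V^{2}\subseteq U\subseteq G$, pushing $xy$ back into $G$ and hence into $H$.
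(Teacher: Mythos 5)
Your proposal is correct and follows essentially the same route as the paper: symmetry of $H$ via the inverse homeomorphism of Proposition~\ref{p2}, and $H^{2}=H$ by extracting, for each $U\in\tau_{G}(e)$, a $V\in\tau_{G}(e)$ with $V^{2}\subset U$ (which the paper invokes directly from the continuity of multiplication together with $e\in G$ and $G$ open, and you cite as Theorem~\ref{t5}(i)). You merely spell out more explicitly the final step that a symmetric subset $H$ with $H^{2}=H$ carrying the subspace topology is a topological group, which the paper leaves implicit.
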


\begin{proof}
Since $e\in G$, it follows from Proposition~\ref{p2} that $H$ is symmetric. It suffices to prove that $H^{2}=H$.
Clearly, $H\subset H^{2}$. Take any $x, y\in H$. We prove that $xy\in H$. From $x, y\in H$ we have $x, y\in U$ for each $U\in\tau_{G}(e)$. For each $U\in\tau_{G}(e)$, since $G$ is a topological rough group and $G$ is open in $\overline{G}$, there exists $V\in\tau_{G}(e)$ such that $VV\subset U$, thus $xy\in VV\subset U$. Then $xy\in \bigcap\{U: U\in\tau_{G}(e)\}$. Therefore, $H^{2}\subset H$.
\end{proof}

In \cite{BIO2016}, the authors proved that the intersection of two topological rough subgroups $H_{1}$ and $H_{2}$ of the topological rough group $G$ is a topological rough subgroup if $\overline{H_{1}}\cap \overline{H_{2}}=\overline{H_{1}\cap H_{2}}$. However, the condition ``$\overline{H_{1}}\cap \overline{H_{2}}=\overline{H_{1}\cap H_{2}}$'' is only a sufficient condition but not a necessary, see Example~\ref{e2}. Indeed, put $H_{1}=\{2, 6\}$ and $H_{2}=\{5, 9\}$. Then $H_{1}, H_{2}$ and $H_{1}\cap H_{2}=\emptyset$ are all topological rough groups. But $\overline{H_{1}}\cap \overline{H_{2}}=\{1, 2, 5\}\neq\emptyset$. Therefore, it is natural to pose the following question.

\begin{question}
Let $G$ be a topological rough group and let $H_{1}, H_{2}$ be two rough subgroups. How to characterize the properties of $H_{1}, H_{2}$ such that $H_{1}\cap H_{2}$ is also a rough subgroup?
\end{question}

In \cite{AAAO2019}, the authors proved the following result.

\begin{theorem}\cite[Theorem 3.2]{AAAO2019}\label{t7}
Let $G$ be a topological rough group, and let $H$ be a rough subgroup. If $H_{\tau}^{c}$ in
$\overline{G}$ is a subset of $G$, then $H_{\tau}^{c}$ is a rough subgroup in $G$.
\end{theorem}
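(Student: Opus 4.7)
The plan is to verify, for the candidate set $H^c_\tau$, the two characterizations of a rough subgroup from Theorem~2.3: closure under the rough inverse, $(H^c_\tau)^{-1}\subseteq H^c_\tau$, and the product condition $xy\in\overline{H^c_\tau}$ for all $x,y\in H^c_\tau$. The inverse condition is essentially a consequence of Proposition~\ref{p2}(c). That proposition tells us the rough-inverse map $g\colon G\to G$, $g(x)=x^{-1}$, is a self-homeomorphism. The hypothesis $H^c_\tau\subseteq G$ ensures that the closure of $H$ in $(\overline{G},\tau)$ coincides with its closure in $(G,\tau_G)$, so I may compute closures inside $G$. Since $H$ is already a rough subgroup, $g(H)=H^{-1}\subseteq H$; and because a homeomorphism commutes with closure, $g(H^c_\tau)\subseteq H^c_\tau$, i.e.\ $(H^c_\tau)^{-1}\subseteq H^c_\tau$.

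For the product condition I would argue neighborhood by neighborhood. Fix $x,y\in H^c_\tau\subseteq G$ and take any open $W$ containing $xy$ in $\overline{G}$. By continuity of $f\colon G\times G\to\overline{G}$ at $(x,y)$, there exist open sets $U\ni x$ and $V\ni y$ in $G$ with $UV\subseteq W$. Since $x$ and $y$ are limit points of $H$, I can choose $u\in U\cap H$ and $v\in V\cap H$; then $uv\in UV\subseteq W$ and simultaneously $uv\in HH\subseteq\overline{H}$ by the rough-subgroup property of~$H$. Hence every neighborhood of $xy$ in $\overline{G}$ meets $\overline{H}$, so $xy$ lies in the topological closure of $\overline{H}$ in $(\overline{G},\tau)$.

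The crux, and the step I expect to be the main obstacle, is promoting this topological conclusion to the set-theoretic statement $[xy]_R\cap H^c_\tau\neq\emptyset$. The monotonicity $H\subseteq H^c_\tau$ gives $\overline{H}\subseteq\overline{H^c_\tau}$, and the fact that $\overline{H}$ is a union of entire $R$-classes means that each $uv\in\overline{H}\cap W$ produced above carries an $h\in H$ with $uv\sim_R h$. The delicate issue is pinning down a representative of $[xy]_R$ itself lying in $H^c_\tau$, since in general the topological closure of an upper approximation need not sit inside the upper approximation of a topological closure. The hypothesis $H^c_\tau\subseteq G$ should play the decisive role here: combined with $xy\in\overline{G}$ and the fact that $\overline{G}$ is a union of $R$-classes, there is already some $z\in[xy]_R\cap G$, and I would try to leverage the neighborhood argument together with the closedness of $H^c_\tau$ in $G$ to force such a $z$ to lie in $H^c_\tau$, thereby completing the verification.
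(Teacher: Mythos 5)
There is a genuine gap here, and it cannot be repaired: the paper does not prove this statement at all. It quotes it from \cite{AAAO2019} and then immediately \emph{refutes} it with the counterexample in the example following Theorem~\ref{t7}. There, $U=\mathbb{Z}$ with addition, $G$ is the set of odd integers, $H=\{8k+1: k\in\omega\}\cup\{-8k-1: k\in\omega\}$, and the topology is arranged so that $H_{\tau}^{c}=H\cup\{3,-3\}\subset G$; yet $3\in H_{\tau}^{c}$ while $3\ast 3=6\notin\overline{H_{\tau}^{c}}$, because the equivalence class $[6]_{R}=\{6,-6,5,7\}$ is disjoint from $H\cup\{3,-3\}$. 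So $H_{\tau}^{c}$ violates the product condition and is not a rough subgroup.

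Your write-up in fact locates the fatal step precisely. The continuity argument legitimately shows that every $\tau$-neighborhood of $xy$ meets $HH\subset\overline{H}$, i.e.\ that $xy$ lies in the \emph{topological} closure of the upper approximation $\overline{H}$. But the rough-subgroup condition demands $xy\in\overline{H_{\tau}^{c}}$, i.e.\ $[xy]_{R}\cap H_{\tau}^{c}\neq\emptyset$, and there is no passage from the former to the latter: the topology $\tau$ and the partition $U/R$ are unrelated, so a point can be a topological limit of $\overline{H}$ while its entire $R$-class misses $H_{\tau}^{c}$. This is exactly what happens at $xy=6$ in the counterexample (the only neighborhood of $6$ is $\mathbb{Z}\setminus G$, which does meet $H+H\subset\overline{H}$, yet $[6]_{R}\cap H_{\tau}^{c}=\emptyset$), and the hypothesis $H_{\tau}^{c}\subset G$ gives no purchase on it. The symmetry half of your argument is fine; the product half cannot be completed because the statement is false.
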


However, we also find that the proof this result has a gap. Indeed, we have the following counterexample.

\begin{example}
Let $U=\mathbb{Z}$ be a set of integer number and let ($\ast$) be the usual addition `+'. Let $G=\{2k+1: k\in\mathbb{Z}\}$ and $H=\{8k+1: k\in\omega\}\cup\{-8k-1: k\in\omega\}$. A classification of $U$ is $U/R$, where $E_{1}=\{8k+1: k\in\mathbb{N}\}$, $E_{2}=\{-8k-1: k\in\mathbb{N}\}$, $E_{3}=\{1, -1\}\cup (H+H)$, $E_{4}=\{6, -6, 5, 7\}$ and $E_{5}=\mathbb{Z}\setminus \bigcup_{i=1}^{4}E_{i}$. Clearly, $\overline{G}=\mathbb{Z}$, $\overline{H}=\bigcup_{i=1}^{3}E_{i}$ and $0\not\in G$. Now we endow $\overline{G}$ with a topology $\tau$ as follows:

\smallskip
For each $x\in \mathbb{Z}\setminus G$, the point $x$ has a neighborhood base $\{\mathbb{Z}\setminus G\}$;

\smallskip
For each $n\in H$, the point $n$ has a neighborhood base $\{H\}$;

\smallskip
For each $n\in \{3, -3\}$, the point $n$ has a neighborhood base $\{\{n\}\cup H\}$;

\smallskip
For each $n\in G\setminus (H\cup\{3, -3\})$, the point $n$ has a neighborhood base $\{\{n\}\}$.

Clearly, $G$ is a topological rough group under the topology $\tau_{G}$, and $H$ a topological rough subgroup. Clearly, the closure of $H$ in $\overline{G}$ is the set $H\cup\{3, -3\}$, which is contained in $G$. However, $3\in H\cup\{3, -3\}$ and $6\not\in \overline{H\cup\{3, -3\}}=G\setminus\{6, -6, 5, 7\}$. Therefore, $H_{\tau}^{c}=H\cup\{3, -3\}$ is not a topological rough subgroup in $G$.
\end{example}

Therefore, it is interesting to discuss the following question.

\begin{question}
Let $G$ be a topological rough group and let $H$ be a rough subgroup of $G$. When is $H^{c}_{\tau}$ a topological rough subgroup?
\end{question}

In general, an open topological rough subgroup is not closed in a topological rough group, see \cite{BIO2016}. Hence it is natural to consider the following question.

\begin{question}\label{q1}
Let $H$ be an open topological rough subgroup of a topological rough subgroup of $G$. When is $H$ closed in $G$?
\end{question}

Next we discuss Question~\ref{q1} as follows.

\begin{definition}\cite{BO2016}
Let $U$ be a topological group. Let $N$ be a normal subgroup
of $U$ and let $G$ be a nonempty subset of $U$. A classification of $U$ is $U/R=\{uN: u\in U\}$ (that is, the set of coset space). If $G$ is a topological rough group, then we say that $G$ is a {\it topological rough group with respect to the normal subgroup $N$}.
\end{definition}

\begin{proposition}\label{p5}
Let $U$ be a topological group, $N$ an open normal subgroup of $U$ and $G$ a topological rough group with respect to $N$.  If $H$ is a topological rough subgroup of $G$ with respect to $N$, then the following statements hold.

\smallskip
(1) $\overline{H_{\tau}^{c}}=\overline{H_{\tau}^{c}\cap G}=\overline{H}=HN$.

\smallskip
(2) The closure of $H$ in $\overline{G}$, $H_{\tau}^{c}$, is a topological rough group with respect to $N$, and $\overline{H_{\tau}^{c}}=\overline{H}=HN$.

\smallskip
(3) The closure of $H$ in $G$, $H^{c}$, is a topological rough subgroup of $G$ with respect to $N$, and $\overline{H^{c}}=\overline{H}=HN$.
\end{proposition}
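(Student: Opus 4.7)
The plan is to leverage the fact that an open subgroup of a topological group is also closed, so $HN = \overline{H}$ is clopen in $U$ (and hence in $\overline{G}$, which I take to carry the subspace topology inherited from $U$ --- this is the natural interpretation in view of the hypothesis that $U$ is a topological group). First I would record two preliminary observations: since $[x]_{R} = xN$, for any $X \subseteq U$ we have $\overline{X} = XN$; and since $N$ is open in $U$, its complement is a union of (open) cosets, so $N$ is clopen in $U$, whence any union of $N$-cosets --- in particular $HN$ and $\overline{G} = GN$ --- is clopen. Note also that since $H$ and $G$ are rough groups, the rough-inverse axiom gives $H^{-1} = H$ and $G^{-1} = G$, and that the rough identity $e$ of $G$ is the identity element of $U$.

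For (1), the inclusion $H \subseteq HN$ together with closedness of $HN$ in $\overline{G}$ gives $H \subseteq H_{\tau}^{c} \subseteq HN$, and intersecting with $G$ yields $H \subseteq H_{\tau}^{c} \cap G \subseteq HN$. Taking upper approximations and using $(HN)N = HN$ collapses all three sets onto the same upper approximation $HN = \overline{H}$.

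For (2) I would verify the four rough-group axioms for $H_{\tau}^{c}$ with upper approximation $HN$. Associativity is inherited from $U$. For the identity: since $H$ is a rough group, $e \in \overline{H} = HN$, so $H \cap N \neq \emptyset$ and $e \in HN = \overline{H_{\tau}^{c}}$. For the product axiom, using normality of $N$ (so $NH = HN$) and $HH \subseteq \overline{H} = HN$:
\[
H_{\tau}^{c} \cdot H_{\tau}^{c} \subseteq HN \cdot HN = H(NH)N = H(HN)N = (HH)N \subseteq (HN)N = HN.
\]
The main obstacle is the inverse axiom, which is where I must carefully use the ambient topological-group structure. Inversion in $U$ is a self-homeomorphism and restricts to a self-homeomorphism of $\overline{G}$, because $\overline{G}^{-1} = (GN)^{-1} = N^{-1}G^{-1} = NG = GN = \overline{G}$. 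Now given $x \in H_{\tau}^{c}$ and any open neighborhood $W$ of $x^{-1}$ in $\overline{G}$, the set $W^{-1}$ is an open neighborhood of $x$ in $\overline{G}$, so meets $H$; since $H = H^{-1}$, taking inverses shows $W$ meets $H$, hence $x^{-1} \in H_{\tau}^{c}$. Continuity of the restricted product and inverse on $H_{\tau}^{c}$ is then immediate from the subspace topology, and the fact that $\overline{H_{\tau}^{c}} = HN$ is a union of $N$-cosets confirms the qualifier ``with respect to $N$''. The equality $\overline{H_{\tau}^{c}} = \overline{H} = HN$ is part (1).

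For (3), I would observe that $H^{c} = H_{\tau}^{c} \cap G$ by definition of the topology $\tau_{G}$. The rough-group axioms for $H^{c}$ transfer from those just established for $H_{\tau}^{c}$; the only additional verification is that if $x \in H^{c}$ then $x^{-1} \in G$, which follows because $G$ is itself a rough group, so the $U$-inverse of $x$ lies in $G$. Combined with $x^{-1} \in H_{\tau}^{c}$ from (2), this gives $x^{-1} \in H_{\tau}^{c} \cap G = H^{c}$. Closure of the product in $\overline{H^{c}} = HN$ was checked in (2), and continuity of the operations on $H^{c}$ is inherited from $G$. Finally, $\overline{H^{c}} = \overline{H} = HN$ is again part (1).
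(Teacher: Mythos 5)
Your proof is correct and follows essentially the same route as the paper: part (1) comes down to trapping $H_{\tau}^{c}$ inside $HN$ using that $N$ is an open neighborhood of $e$, and parts (2)--(3) follow from symmetry of the closure under inversion together with the coset computation $HN\cdot HN=H^{2}N\subset HN$. The only (minor) difference is that you deduce $H_{\tau}^{c}\subset HN$ by observing that $HN$ is clopen as a union of open $N$-cosets with open complement, whereas the paper cites the formula $\mathrm{cl}(H)=\bigcap\{HW\}$ from the theory of topological groups; both rest on the same fact and your version is, if anything, a bit more self-contained.
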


\begin{proof}
(1) Clearly, $$HN=\overline{H}\subset\overline{H_{\tau}^{c}\cap G}\subset\overline{H_{\tau}^{c}}.$$ From \cite[Theorem 1.4.5]{AA}, it follows that $$H_{\tau}^{c}=\overline{G}\cap\bigcap\{HW: W\ \mbox{is any open neighborhood of}\ e\ \mbox{in}\ U\}$$ and $\overline{G}=GN$,
then $H_{\tau}^{c}\subset GN\cap HN=HN=\overline{H}$. Hence $$\overline{H_{\tau}^{c}}=H_{\tau}^{c}N\subset HNN=HN=\overline{H}.$$ Thus $\overline{H_{\tau}^{c}}=\overline{H_{\tau}^{c}\cap G}=\overline{H}=HN$.

(2) Since $H$ is a topological rough subgroup, $H$ is symmetric and $H^{2}\subset \overline{H}=HN$. Since the inverse mapping is a homeomorphism and $GN$ a symmetric set, $H_{\tau}^{c}$ is symmetric. Next it suffices to prove that $(H_{\tau}^{c})^{2}\subset \overline{H_{\tau}^{c}}$.

Take any $x, y\in (H_{\tau}^{c})^{2}$. Then $x, y\in HN$ since $(H_{\tau}^{c})^{2}\subset \overline{H_{\tau}^{c}}$, Hence $$xy\in HNHN=H^{2}N\subset \overline{H}N=HNN=HN\subset H_{\tau}^{c}N=\overline{H_{\tau}^{c}}.$$Therefore, $H_{\tau}^{c}$ is a topological rough subgroup with respect to $N$.

(3) By (2), $H^{c}$ is symmetric. It suffices to prove that $(H^{c})^{2}\subset \overline{H^{c}}$. Take any $x, y\in (H^{c})^{2}$. By the proof of (2), we have $xy\in\overline{H}$, thus $xy\in\overline{H^{c}}$ by (1).
\end{proof}

Let $G$ be a topological rough group and let $H$ be a symmetric subset of $G$. We say that $H$ is {\it $n$-order} if $$n=\min\{m\geq 2: H^{m}=H, m\in\mathbb{N}\},$$ where $n\in (\mathbb{N}\setminus\{1\})\cup\{\infty\}$ Obviously, $H$ is a group if and only if $H$ is 2-order, and a 3-order $H$ subset of $G$ is a group if and only if $e\in H$. The set of all odd integers is a 3-order, but not 2-order.

\begin{lemma}\label{l0}
Let $G$ be a topological rough group and $H$ be a 3-order subset of $G$. Then, for each $g, h\in \overline{G}$, we have $gH=hH$ or $gH\cap hH=\emptyset$.
\end{lemma}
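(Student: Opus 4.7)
The plan is to prove the lemma by contrapositive: assume $gH\cap hH\neq\emptyset$ and show that this forces $gH=hH$. Pick any $x\in gH\cap hH$, so $x=ga=hb$ for some $a,b\in H$. By the symmetry of $H$ one has $a^{-1},b^{-1}\in H$, and by the rough group axioms the rough identity $e$ acts as a two-sided identity on all of $\overline{G}$ and associativity $(uv)w=u(vw)$ holds for any $u,v,w\in\overline{G}$. I will use these facts repeatedly below.

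The core computation is to verify $gH\subseteq hH$ (the reverse inclusion follows by swapping the roles of $g$ and $h$). Fix any $c\in H$; I want to express $gc$ as $h\cdot h'$ for some $h'\in H$. The idea is to insert $aa^{-1}=e$ between $g$ and $c$ and then re-associate:
\begin{equation*}
gc \;=\; g(e\,c) \;=\; g\bigl((aa^{-1})c\bigr) \;=\; g\bigl(a(a^{-1}c)\bigr) \;=\; (ga)(a^{-1}c) \;=\; x(a^{-1}c) \;=\; (hb)(a^{-1}c) \;=\; h\bigl((ba^{-1})c\bigr).
\end{equation*}
Each application of associativity is legitimate because the elements involved ($g$, $h$, $a$, $a^{-1}$, $b$, $c$, and the products $ga$, $a^{-1}c$, $ba^{-1}$, etc.) all lie in $\overline{G}$: the elements of $H\subseteq G$ are in $\overline{G}$, their inverses are in $G\subseteq\overline{G}$, and every product of two elements of $G$ lies in $\overline{G}$ by axiom (1) of the rough group definition.

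Finally, since $H$ is symmetric we have $a^{-1}\in H$, so $ba^{-1}c\in H\cdot H\cdot H=H^{3}$. By the defining property of a $3$-order subset, $H^{3}=H$, hence $ba^{-1}c\in H$. Therefore $gc=h\cdot(ba^{-1}c)\in hH$, establishing $gH\subseteq hH$. The symmetric argument yields $hH\subseteq gH$, and thus $gH=hH$, completing the proof. The only delicate point is making sure that every bracketing used in the chain of equalities is justified by associativity in $\overline{G}$; since all relevant factors lie in $\overline{G}$, this is automatic, so no real obstacle arises beyond careful bookkeeping.
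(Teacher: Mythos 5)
Your proposal is correct and follows essentially the same route as the paper's proof: from a common point $ga=hb$ of $gH\cap hH$ you rewrite an arbitrary $gc$ as $h$ times a triple product of elements of $H$ (using symmetry of $H$ to get $a^{-1}\in H$) and invoke $H^{3}=H$. The only difference is that you spell out the associativity steps in $\overline{G}$ explicitly, which the paper leaves implicit.
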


\begin{proof}
Take any $g, h\in \overline{G}$. Assume that $gH\cap hH\neq\emptyset$. We prove that $gH=hH$. Indeed, take an arbitrary $x\in gH$. Then there exists $h_{1}\in H$ such that $x=gh_{1}$. Since $gH\cap hH\neq\emptyset$, it follows that there exist $h_{2}, h_{3}\in H$ such that $gh_{2}=hh_{3}$, then $g=hh_{3}h_{2}^{-1}$. Hence $x=(hh_{3}h_{2}^{-1})h_{1}=h(h_{3}h_{2}^{-1}h_{1})\in hH$ since $H$ is 3-order. Thus $gH\subset hH$. Similarly, we have $hH\subset gH$. Therefore, $gH=hH$.
\end{proof}

\begin{theorem}
Let $G$ be a topological group, and let $H$ be an open topological rough subgroup of $G$ with respect to an open normal subgroup $N$ of $G$. If $H$ is 3-order, then $H$ is closed in $G$.
\end{theorem}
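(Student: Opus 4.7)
The plan is to show that $G\setminus H$ is open, which is equivalent to $H$ being closed. Since $G$ is a topological group and $H$ is open in $G$, every left translate $gH$ is open as well, so translates of $H$ furnish the natural candidates for open neighborhoods inside $G\setminus H$.

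To carry this out, I would fix an arbitrary $x\in G\setminus H$ and choose any $h\in H$, which is possible since $H$ is nonempty. I then form the open set $W=(xh^{-1})H$; left translation by $xh^{-1}$ is a homeomorphism of $G$, so $W$ is open, and $W$ contains $x=(xh^{-1})h$ because $h\in H$. The task reduces to showing $W\cap H=\emptyset$.

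This is the step at which the $3$-order hypothesis is essential, through Lemma~\ref{l0}. Applying that lemma to the two elements $xh^{-1}$ and the group identity $e\in G$, and using that $eH=H$, I obtain the dichotomy $W=H$ or $W\cap H=\emptyset$. The first alternative would force $x\in W=H$, contradicting $x\notin H$, so the second alternative must hold. Therefore $W$ is an open neighborhood of $x$ in $G$ that is disjoint from $H$, and consequently $H$ is closed in $G$.

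The only delicate point is confirming that Lemma~\ref{l0} applies: its hypothesis requires a topological rough group with $H$ a $3$-order subset and the two chosen elements lying in the upper approximation. Since $G$ is itself a topological group we may view it as a topological rough group with $\overline{G}=G$, so both $xh^{-1}$ and $e$ automatically lie in $\overline{G}$, and the $3$-order assumption is given. The open normal subgroup $N$ and the formula $\overline{H}=HN$ from Proposition~\ref{p5} are in the background to guarantee the rough structure of $H$; they are not needed directly in the coset argument. I expect no real obstacle beyond this bit of bookkeeping.
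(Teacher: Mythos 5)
Your proof is correct. The engine is the same as the paper's---Lemma~\ref{l0} together with the fact that left translates of the open set $H$ are open in the topological group $G$, plus the trick of writing $x=(xh^{-1})h$ to place $x$ inside a translate of $H$ (necessary because $e$ need not belong to a $3$-order set). But you organize the argument more directly: you show $G\setminus H$ is open by exhibiting, around each $x\notin H$, the open translate $(xh^{-1})H$, which by Lemma~\ref{l0} applied to the pair $xh^{-1}$ and $e$ (with $eH=H$) is either equal to $H$---impossible since $x\notin H$---or disjoint from $H$. The paper instead first invokes Proposition~\ref{p5} to see that the closure $H^{c}$ is again a rough subgroup, then partitions $H^{c}$ into pairwise disjoint open sets of the form $H^{c}\cap ghH$ and concludes that $H$ is clopen in $H^{c}$, hence closed in $G$. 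Your route bypasses $H^{c}$ and Proposition~\ref{p5} entirely, and it makes visible that the hypotheses on $N$ (openness, normality) play no direct role: all that is really used is that $G$ is a topological group and $H$ is an open symmetric subset with $H^{3}=H$, symmetry being built into the definition of $3$-order and consumed in the proof of Lemma~\ref{l0}. The one piece of bookkeeping you flag---that $G$ qualifies as a topological rough group with $\overline{G}=G$ so that the lemma applies to $xh^{-1}$ and $e$---is handled correctly, and in any case the proof of Lemma~\ref{l0} is purely algebraic.
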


\begin{proof}
By Proposition~\ref{p5}, $H^{c}$ is a topological rough subgroup of $G$. Since $H$ is open in $G$ and $H^{c}\subset G$, it follows that $H$ is open in $H^{c}$. For each $g\in G$, $L_{g}$ is a topological homeomorphism. By left transformation $L_{g}(H)=gH$ is open in $G$ since $H$ is open in $G$. Fix an arbitrary $h\in H$. For any $g\in H^{c}$, since $g\in L_{g}\circ L_{h}(H)$ and $L_{g}\circ L_{h}(H)$ is open in $G$, $H^{c}\cap L_{g}\circ L_{h}(H)$ is open in $H^{c}$. Therefore, it follow from $H=H^{3}$ and Lemma~\ref{l0} that the family $\{H^{c}\cap L_{g}\circ L_{h}(H): g\in H^{c}\}$ consists of disjoint open subsets and covers the set $H^{c}$. Hence, each element of the family $\{H^{c}\cap L_{g}\circ L_{h}(H): g\in H^{c}\}$ is also closed, then $H$ is closed in $H^{c}$. Since $H^{c}$ is closed in $G$, the set $H$ is closed in $G$.
\end{proof}

\begin{proposition}\label{p7}
Let $G$ be a topological rough group with $G$ open in $\overline{G}$ and $e\in G$, and let $H$ be a discrete normal rough subgroup of $G$. Then for each $x\in H$ there exists an open symmetric neighborhood $V$ of $e$ in $G$ such that $xy=yx$ for any $y\in V$.
\end{proposition}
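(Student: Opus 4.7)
The plan is to construct $V$ via the continuity at $e$ of the conjugation-type map $y \mapsto y x y^{-1}$, exploiting that this map lands in the discrete subgroup $H$ and sends $e$ to $x$. The main issue to finesse is that triple products are not a priori defined (or continuous) in a general topological rough group, since the multiplication is only $f\colon G\times G\to\overline{G}$; the assumption that $G$ is open in $\overline{G}$ is exactly what will let us keep intermediate products inside $G$.

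First, I would apply Proposition~\ref{p2} to the right-translation $R_x\colon G\to\overline{G}$. Since $R_x(e)=x\in G$ and $G$ is open in $\overline{G}$, continuity of $R_x$ at $e$ yields an open neighborhood $U_0$ of $e$ in $G$ with $U_0 x\subset G$. On $U_0$ I then define
\[
\nu(y) \;=\; (yx)\,y^{-1},
\]
which is now literally a legal use of $f$ on $G\times G$, because $yx\in G$ and $y^{-1}\in G$. The map $y\mapsto(yx,y^{-1})$ from $U_0$ to $G\times G$ is continuous (using that $G$ carries the subspace topology from $\overline{G}$, that $R_x$ is continuous, and that inversion is a homeomorphism by Proposition~\ref{p2}(c)), so $\nu$ is continuous at $e$, with $\nu(e)=(ex)e^{-1}=x$.

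Next I would check that $\nu$ takes values in $H$. Fix $y\in U_0$. Normality of $H$ (as in the definition, $yH=Hy$ for $y\in G$) applied to $x\in H$ gives some $h\in H$ with $yx=hy$. Associativity in $\overline{G}$ then yields
\[
\nu(y) \;=\; (yx)\,y^{-1} \;=\; (hy)\,y^{-1} \;=\; h(yy^{-1}) \;=\; he \;=\; h \;\in\; H.
\]
Because $H$ is discrete in $G$, the singleton $\{x\}$ is open in $H$, so there is an open set $W$ in $G$ with $W\cap H=\{x\}$. Since $\nu$ is continuous at $e$ and $\nu(e)=x\in W$, the preimage $\nu^{-1}(W)$ is an open neighborhood of $e$ in $U_0$ (hence in $G$); call it $V_0$. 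For every $y\in V_0$,
\[
\nu(y) \;\in\; W\cap H \;=\; \{x\},
\]
so $yxy^{-1}=x$, and right-multiplying by $y$ gives $yx=xy$.

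Finally, to obtain a symmetric neighborhood, I would set $V=V_0\cap V_0^{-1}$; this is open (inversion is a homeomorphism), symmetric, and contains $e$, and the identity $yx=xy$ persists on $V\subset V_0$. The only real obstacle throughout is keeping the partially-defined multiplication under control, and that is handled at the outset by shrinking to $U_0$ using the openness of $G$ in $\overline{G}$.
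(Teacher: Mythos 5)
Your proof is correct and follows essentially the same route as the paper's: isolate $x$ in the discrete subgroup $H$ by an open set $U$ with $U\cap H=\{x\}$, use continuity of the multiplication to force $yxy^{-1}\in U$ for all $y$ in a small symmetric neighborhood $V$ of $e$, and use normality to conclude $yxy^{-1}\in H\cap U=\{x\}$. Your packaging via the explicit map $\nu(y)=(yx)y^{-1}$, with the preliminary shrinking to $U_{0}$ so that all intermediate products stay in $G$, is just a more careful bookkeeping of the same argument.
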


\begin{proof}
If $H=\{e\}$, there is nothing to prove. Hence suppose that the subgroup $H\neq\{e\}$. Fix an arbitrary element $x\in H\setminus\{e\}$. Since the rough subgroup $K$ is discrete, there exists an open neighborhood $U$ of $x$ in $G$ such that $U\cap H=\{x\}$. Clearly, $x\in U$ and $U$ is open in $\overline{G}$ since $G$ is open in $\overline{G}$. By the continuity of the mapping $f: G\times G\rightarrow \overline{G}$, there exist open neighborhoods $V_{1}, W$ of $e$ and $x$ in $G$ respectively such that $WV_{1}\subset U$. Similarly, there exists an open symmetric neighborhood $V\subset V_{1}$ of $e$ in $G$ such that $Vx\subset W$. Then we have $(ex)e=x\in VxV\subset WV_{1}\subset U$. Let $y\in V$ be arbitrary. Then $yxy^{-1}\in H$ since $H$ is normal. Moreover, it is obvious that $$yxy^{-1}\in VxV\subset U.$$ Thus $yxy^{-1}\in H\cap U=\{x\}$, that is, $yxy^{-1}=x$.
\end{proof}

\begin{remark}
 From the proof above, this proposition holds when $G$ is a topological group.
\end{remark}

Finally, we discuss the connectedness of topological rough groups in this section. Let $G$ be a space. For each point $g\in G$, the {\it connected component} or, simply, {\it the component of $G$ at point $g$} is the union of all connected subsets of $G$ containing $g$, denote it by $C_{G}(g)$. Since
the union of any family of connected subspaces containing a given point is connected, it follow from the definition of topological rough group and the connectedness being preserved by continuous mapping that it easily to verify the following proposition, we leave the proof to the reader.

\begin{proposition}\label{p6}
Let $G$ be a topological rough group. Then we have the following statements:

\smallskip
(1) $C_{G}(g)^{-1}=C_{G}(g^{-1})$ for each $g\in G$;

\smallskip
(2) $\bigcup_{g\in G}C_{G}(g)C_{G}(g)^{-1}\subset C_{\overline{G}}(e)$;

\smallskip
(3) For each $g\in G$, $C_{G}(g)$ is closed in $G$;

\smallskip
(4) If $e\in G$, then $C_{G}(e)^{2}\subset C_{\overline{G}}(e)$;
\end{proposition}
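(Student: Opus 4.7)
The plan is to derive all four statements from two elementary facts about connectedness: a continuous image of a connected space is connected, and the Cartesian product of two connected spaces is connected. These two observations, combined with the continuity of the multiplication $f:G\times G\to \overline{G}$ and of the inverse map (which come from the definition of topological rough group), will handle every item with very little work.

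For (1), I would invoke Proposition~\ref{p2}(c): the inverse mapping $x\mapsto x^{-1}$ is a homeomorphism of $G$ onto itself. A homeomorphism carries the connected component at $g$ bijectively onto the connected component at its image, so it sends $C_{G}(g)$ onto $C_{G}(g^{-1})$; by definition this image is exactly $C_{G}(g)^{-1}$, giving the required equality. For (3), the statement is purely topological: the closure of any connected subset in any space is connected, so $C_{G}(g)^{c}$ is a connected subset of $G$ containing $g$, and by the maximality clause in the definition of the component $C_{G}(g)^{c}\subset C_{G}(g)$, i.e.\ $C_{G}(g)$ is closed in $G$.

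For (2) and (4) the argument is essentially the same. Since the Cartesian product of connected sets is connected, $C_{G}(g)\times C_{G}(g)^{-1}$ and $C_{G}(e)\times C_{G}(e)$ are connected subsets of $G\times G$. Applying the continuous map $f:G\times G\to \overline{G}$, their images $C_{G}(g)\cdot C_{G}(g)^{-1}$ and $C_{G}(e)^{2}$ are connected subsets of $\overline{G}$. It remains to check that the rough identity $e$ belongs to both: for (2), by (1) we have $g^{-1}\in C_{G}(g^{-1})=C_{G}(g)^{-1}$, so $e=gg^{-1}\in C_{G}(g)\cdot C_{G}(g)^{-1}$; for (4), the hypothesis $e\in G$ gives $e\in C_{G}(e)$ and hence $e=e\cdot e\in C_{G}(e)^{2}$. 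Maximality of the component $C_{\overline{G}}(e)$ then forces both connected sets to lie inside $C_{\overline{G}}(e)$. No step is genuinely difficult; the only small point to watch is to invoke (1) when verifying the inclusion in (2), and to recall that the product map $f$ takes values in $\overline{G}$ rather than $G$, which is precisely why the target component in (2) and (4) is $C_{\overline{G}}(e)$ and not $C_{G}(e)$.
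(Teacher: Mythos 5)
Your proof is correct and follows exactly the strategy the paper indicates (it leaves the details to the reader but cites precisely the facts you use: preservation of connectedness under continuous maps, the homeomorphism property of inversion, and maximality of components). All four parts are handled soundly, including the key observation that the images under $f$ land in $\overline{G}$, which is why the target in (2) and (4) is $C_{\overline{G}}(e)$.
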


\begin{proposition}
Let $(U, R)$ be an approximation space endowed with a topology $\theta$ such that the classification of $U$ is $U/R=\{C_{U}(s): s\in U\}$. If $G$ is a topological rough group with $\theta|_{\overline{G}}\subset\tau$ and $e\in G$, then $C_{G}(e)$ is a closed topological rough group.
\end{proposition}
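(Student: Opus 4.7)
The plan is to verify separately that $C_{G}(e)$ is closed in $G$, that it is a rough group, and that it inherits a topological rough group structure from $G$. Closedness is immediate from Proposition~\ref{p6}(3). The substantive content is to verify the rough-group axioms, particularly the closure condition $C_{G}(e)\cdot C_{G}(e)\subset\overline{C_{G}(e)}$; the remaining algebraic axioms and the two continuity conditions will all follow by restriction from the corresponding properties of $G$.

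For the closure condition I would first invoke Proposition~\ref{p6}(4) to obtain $C_{G}(e)^{2}\subset C_{\overline{G}}(e)$. The next step, which is where the hypotheses on $\theta$ enter essentially, is to observe that since $\theta|_{\overline{G}}\subset\tau$ the topology $\tau$ is finer than $\theta|_{\overline{G}}$ on $\overline{G}$, so every $\tau$-connected subset of $\overline{G}$ is $\theta|_{\overline{G}}$-connected and therefore $\theta$-connected as a subset of $U$. Consequently $C_{\overline{G}}(e)\subset C_{U}(e)$, and by the classification hypothesis $U/R=\{C_{U}(s):s\in U\}$ one has $C_{U}(e)=[e]_{R}$. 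Hence $C_{G}(e)^{2}\subset [e]_{R}$, and since $e\in [e]_{R}\cap C_{G}(e)$ every such product $xy$ satisfies $[xy]_{R}=[e]_{R}$ and therefore lies in $\overline{C_{G}(e)}$.

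For the remaining rough-group axioms, associativity on $\overline{C_{G}(e)}$ is inherited from $\overline{G}$ because $\overline{C_{G}(e)}\subset\overline{G}$; the element $e$ continues to serve as the rough identity on $\overline{C_{G}(e)}$; and for inverses, Proposition~\ref{p6}(1) gives $C_{G}(e)^{-1}=C_{G}(e^{-1})=C_{G}(e)$, where $e^{-1}=e$ follows from applying the rough-identity axiom to $e\in G$. Finally, endowing $\overline{C_{G}(e)}$ with the subspace topology inherited from $(\overline{G},\tau)$ induces on $C_{G}(e)$ precisely the subspace topology of $(G,\tau_{G})$, and the continuity of the product mapping $C_{G}(e)\times C_{G}(e)\to\overline{C_{G}(e)}$ and of the inverse mapping $C_{G}(e)\to C_{G}(e)$ follow by restricting the continuous mappings of $G$ to this subspace. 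I expect the main obstacle to be the closure step, specifically the use of the compatibility hypothesis $\theta|_{\overline{G}}\subset\tau$ to force $C_{\overline{G}}(e)$ into the single equivalence class $[e]_{R}$; once this is established, the remaining verifications are routine bookkeeping.
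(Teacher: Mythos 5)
Your proof is correct and follows essentially the same route as the paper: closedness and symmetry come from Proposition~\ref{p6}, and the product condition $C_{G}(e)C_{G}(e)\subset\overline{C_{G}(e)}$ is obtained by a connectedness argument combined with the hypothesis that the equivalence classes are the $\theta$-components of $U$. If anything, your version is slightly more careful than the paper's, since you justify the containment $C_{\overline{G}}(e)\subset C_{U}(e)=[e]_{R}$ explicitly from $\theta|_{\overline{G}}\subset\tau$ rather than asserting the identification $\overline{C_{G}(e)}=C_{U}(e)=C_{\overline{G}}(e)$ as ``clear.''
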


\begin{proof}
By Proposition~\ref{p6}, $C_{G}(e)$ is symmetric and closed. It suffices to prove that $C_{G}(e)C_{G}(e)\subset \overline{C_{G}(e)}.$ Clearly, $\overline{C_{G}(e)}=C_{S}(e).$ Since $G$ is a topological rough group and the connectedness is preserved by continuous mapping, the subspace $C_{G}(e)C_{G}(e)$ is connected in $\overline{G}$, then $C_{G}(e)C_{G}(e)\subset C_{\overline{G}}(e)=C_{S}(e)$ since $e\in C_{G}(e)C_{G}(e)$ and $C_{S}(e)= C_{\overline{G}}(e)$. Hence $C_{G}(e)$ is a topological rough subgroup.
\end{proof}

\begin{proposition}
Let $(U, R)$ be an approximation space endowed with a topology $\theta$ such that the classification of $U$ is $U/R=\{C_{U}(s): s\in U\}$. If $G$ is a connected open topological rough group in $\overline{G}$ with $\theta|_{\overline{G}}\subset\tau$ and $e\in G$, then, for each neighborhood $U$ of $e$ in $G$, there exists an open neighborhood $V$ of $e$ in $G$ such that $V$ is a topological rough group.
\end{proposition}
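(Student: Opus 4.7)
The plan is to use the connectedness hypothesis to force $\overline{V} = \overline{G}$ for every subset $V$ of $G$ containing $e$, after which Theorem~\ref{t5} lets us extract a symmetric open neighborhood $V \subset U$ of $e$ in $G$ and read off that $V$ inherits all of the topological rough group structure from $G$.

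The first step is to show $G \subset [e]_R = C_U(e)$. Since $\theta|_{\overline{G}} \subset \tau$, restricting further to $G$ gives $\theta|_G \subset \tau_G$, i.e.\ $\tau_G$ is finer than the subspace topology induced by $\theta$. Because connectedness survives passage to a coarser topology, connectedness of $G$ in $\tau_G$ implies $G$ is connected as a subspace of $(U, \theta)$. Any connected subspace of $(U, \theta)$ is contained in a single connected component, and $e \in G$ forces this component to be $C_U(e) = [e]_R$. Hence $G \subset [e]_R$ and
\[
\overline{G} \;=\; \{x \in U : [x]_R \cap G \neq \emptyset\} \;=\; [e]_R.
\]
Applied to any subset $V$ of $G$ with $e \in V$, the same computation yields $\overline{V} = [e]_R = \overline{G}$.

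Next, Theorem~\ref{t5} (available since $e \in G$ and $G$ is open in $\overline{G}$) provides a symmetric open neighborhood $V$ of $e$ in $G$ with $V \subset U$ and $V^{-1} = V$. By the previous paragraph $\overline{V} = \overline{G}$. All rough group axioms for $V$ follow: $V \cdot V \subset G \cdot G \subset \overline{G} = \overline{V}$ gives the product axiom; associativity on $\overline{V} = \overline{G}$ is inherited from $G$; the element $e \in V \subset \overline{V}$ remains a rough identity since it acts as identity on $\overline{V} = \overline{G}$; and $V^{-1} = V$ combined with $x^{-1} \in G$ for each $x \in V$ places every rough inverse back in $V$. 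Equipping $\overline{V}$ with the same topology $\tau$, the multiplication $V \times V \to \overline{V}$ and the inversion $V \to V$ are continuous as restrictions of the corresponding continuous maps on $G$, so $V$ is a topological rough group.

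The only non-routine step is the transfer of connectedness from $\tau_G$ to $\theta|_G$ via the comparison $\theta|_{\overline{G}} \subset \tau$; once the identification $\overline{V} = \overline{G}$ is in hand, the remaining axioms are verified mechanically.
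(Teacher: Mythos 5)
Your proof is correct and follows essentially the same route as the paper: both arguments extract a symmetric open neighborhood $V\subset U$ of $e$ and then use connectedness (transferred from $\tau_G$ to $\theta$ via $\theta|_{\overline{G}}\subset\tau$) to conclude $G\subset C_U(e)=[e]_R$ and hence $\overline{V}=[e]_R=\overline{G}$, from which the rough subgroup axioms follow. Your write-up is in fact somewhat more explicit than the paper's about why $\overline{G}=C_U(e)\subset\overline{V}$, but the underlying idea is identical.
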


\begin{proof}
Let $U$ be a neighborhood of $e$ in $G$. From the continuity of the multiplication at point $(e, e)$, there exists an open symmetric neighborhood $V$ of $e$ such that $V^{2}\subset U$. Then $V$ is a topological rough subgroup. Indeed, since $G$ is connected and $V^{2}\subset U\subset G$, it follows that $\overline{V}\subset \overline{G}=C_{S}(e)\subset\overline{V}$.
\end{proof}

\begin{proposition}
Let $G$ be a connected topological group, and let $H$ be a discrete normal rough subgroup of $G$ such that $e\in H$. Then $H$ is contained in the center of the group $G$, that is, every element of $H$ commutes with every element of $G$.
\end{proposition}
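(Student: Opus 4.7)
The plan is to fix an arbitrary $x \in H$ and prove that its centralizer
\[
C(x) = \{g \in G : xg = gx\}
\]
equals all of $G$, by showing that $C(x)$ is a non-empty clopen subset of the connected space $G$. Since $x$ ranges over $H$, this gives exactly $H \subset Z(G)$.

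First I would invoke Proposition~\ref{p7} (together with the Remark immediately after it, which asserts that the same conclusion holds when $G$ is a topological group) to produce, for the fixed $x \in H$, an open symmetric neighborhood $V$ of $e$ in $G$ such that $xy = yx$ for every $y \in V$. This gives $V \subset C(x)$, so $C(x)$ is a neighborhood of $e$. Because $G$ is an honest topological group and $C(x)$ is clearly a subgroup of $G$, the standard left-translation argument applies: for every $g \in C(x)$ the set $gV$ is an open neighborhood of $g$ contained in $C(x)$, so $C(x)$ is open in $G$.

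Next I would use the well-known fact that any open subgroup of a topological group is automatically closed, since its complement is the union of the (open) non-trivial left cosets $gC(x)$, $g \notin C(x)$. Therefore $C(x)$ is clopen in $G$, and it is non-empty because $e \in C(x)$. The connectedness of $G$ then forces $C(x) = G$, i.e.\ $xg = gx$ for every $g \in G$. Since $x \in H$ was arbitrary, every element of $H$ commutes with every element of $G$, which is precisely the statement that $H$ lies in the center of $G$.

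There is no genuine obstacle here; the only point that needs care is to make sure we are entitled to apply Proposition~\ref{p7} in the setting where $G$ is a topological group rather than a topological rough group (the hypotheses ``$G$ open in $\overline{G}$'' and ``$e \in G$'' are automatic, and the cited Remark covers this case), and then to note that the two classical facts used---a subgroup containing a neighborhood of $e$ is open, and an open subgroup is closed---both hold in any topological group regardless of separation axioms.
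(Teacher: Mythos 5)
Your argument is correct and rests on the same two ingredients as the paper's proof: Proposition~\ref{p7} (via the remark that it also holds when $G$ is a topological group) to produce the symmetric open neighborhood $V$ of $e$ commuting with $x$, and the standard fact that an open subgroup of a connected topological group is clopen and therefore the whole group. The only cosmetic difference is where the clopen argument is applied --- you apply it to the centralizer $C(x)$, while the paper applies it to $\bigcup_{n\in\mathbb{N}}V^{n}$ to conclude that $V$ generates $G$ and then commutes $x$ past each factor of a product $y_{1}\cdots y_{n}$; both routes are valid and essentially equivalent.
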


\begin{proof}
Take an arbitrary $x\in H$. From Proposition~\ref{p7} and the remark after the proof, there exists an open symmetric neighborhood $V$ of $e$ in $G$ such that $xy=yx$ for each $y\in V$. Obviously, $\bigcup_{n\in\mathbb{N}}V^{n}$ is an open subgroup, hence it is closed in $G$, then $G=\bigcup_{n\in\mathbb{N}}V^{n}$ since $G$ is connected. Therefore, every element $g\in G$ can be written in the form $g=y_{1}\cdots y_{n}$, where $n\in\mathbb{N}$ and $\{y_{1}, \cdots, y_{n}\}\subset V$. Because $x$ commutes with every element of $V$, it follows that $$gx=y_{1}\cdots y_{n}x=y_{1}\cdots xy_{n}=\cdots=y_{1}x\cdots y_{n}=x y_{1}\cdots y_{n}=xg.$$ Therefore, we have proved that the element $x\in H$ is in the center of the group $G$. By the arbitrary taking of the element of $H$, we conclude that the center of $G$ contains $H$.
\end{proof}

\maketitle
\section{Rough homomorphism on rough groups and open mapping theorem}
In this section we prove the version of the open mapping theorem in the class of topological rough groups. First, we redefine the concept of rough homomorphism. Indeed,  the authors in \cite{N2014} define the concept of the rough homomorphism, but this definition has some defect, for example, this definition does not consider any information of the upper approximation. We revise the definition as follows.

Let $(U_{1}, R_{1})$, $(U_{2}, R_{2})$ be two approximation spaces, and let $\ast_{1}$, $\ast_{2}$ be two binary operations over universes $U_{1}$ and $U_{2}$, respectively. Let $G_{1}$ and $G_{2}$ be topological rough groups. In this section, we always denote $e_{1}$ and $e_{2}$ are the rough identity elements of $G_{1}$ and $G_{2}$ respectively

\begin{definition}
Let $G_{1}\subset U_{1}$ and $G_{2}\subset U_{2}$ be rough groups. We say that $G_{1}$ and $G_{2}$ are {\it rough homomorphism} if there exists a surjection mapping $\varphi: \overline{G_{1}}\rightarrow \overline{G_{2}}$ such that the following conditions (1)-(3) hold:

\smallskip
(1) $\varphi|_{G_{1}}$ is a surjection mapping from $G_{1}$ to $G_{2}$;

\smallskip
(2) For any $x, y\in G_{1}\cup\{e\}$, we have $\varphi(x\ast _{1}y)=\varphi(x)\ast_{2} \varphi(y);$

\smallskip
(3) For any subset $H$ of $G_{1}$, $\overline{H}=\varphi^{-1}(\overline{\varphi(H)})$.

\smallskip
If a rough homomorphism is a bijection, then we say that $G_{1}$ and $G_{2}$ are {\it rough isomorphism}.
\end{definition}

First of all, we give some properties of rough homomorphism.

\begin{proposition}\label{p1}
Let $G_{1}\subset U_{1}$ and $G_{2}\subset U_{2}$ be rough groups that are rough homomorphism. If $\varphi: \overline{G_{1}}\rightarrow \overline{G_{2}}$ is a rough homomorphism, then $\varphi(e_{1})=e_{2}$ and $\varphi(g^{-1})=\varphi(g)^{-1}$ for any $g\in G_{1}$.
\end{proposition}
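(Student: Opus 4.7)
The plan is to use only conditions (1) and (2) of the rough homomorphism definition together with the axioms of a rough group; condition (3) will not be needed here. Since the rough group $G_1$ is non-empty, fix any element $g\in G_1$.

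For the first identity $\varphi(e_1)=e_2$, I apply (2) to the pair $(e_1,g)$, which lies in $\{e_1\}\times G_1\subset G_1\cup\{e_1\}$. Because $g\in G_1\subset \overline{G_1}$ and $e_1$ is the rough identity of $G_1$, we have $e_1\ast_1 g=g$, so
$$\varphi(g)=\varphi(e_1\ast_1 g)=\varphi(e_1)\ast_2\varphi(g).$$
By (1), $\varphi(g)\in G_2$, so it has a rough inverse $\varphi(g)^{-1}\in G_2\subset\overline{G_2}$. Multiplying on the right by $\varphi(g)^{-1}$ in $\overline{G_2}$ and invoking associativity in $\overline{G_2}$ together with the defining property of the rough identity $e_2$, I obtain $\varphi(e_1)\ast_2 e_2=e_2$, hence $\varphi(e_1)=e_2$.

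For the second identity, fix $g\in G_1$; by axiom (4) of Definition 2.1, $g^{-1}\in G_1$ and $g\ast_1 g^{-1}=g^{-1}\ast_1 g=e_1$. Applying (2) to the pairs $(g,g^{-1})$ and $(g^{-1},g)$, both in $G_1\cup\{e_1\}$, and using $\varphi(e_1)=e_2$ from the previous step, I get
$$\varphi(g)\ast_2\varphi(g^{-1})=\varphi(g^{-1})\ast_2\varphi(g)=e_2.$$
Thus $\varphi(g^{-1})$ is a rough inverse of $\varphi(g)$ in $G_2$. A brief verification shows rough inverses are unique: if $y,z\in G$ both satisfy $yx=xy=e$ and $zx=xz=e$, then by associativity in $\overline{G}$, $y=y\ast e=y\ast(x\ast z)=(y\ast x)\ast z=e\ast z=z$. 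Applying this uniqueness in $G_2$ gives $\varphi(g^{-1})=\varphi(g)^{-1}$.

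The main (mild) obstacle is simply being careful about where each element lives: the multiplications in the computations take place in $\overline{G_2}$, not in $G_2$, so I must explicitly appeal to associativity on $\overline{G_2}$ (axiom (2) of Definition 2.1) and to the fact that $e_2$ acts as identity on all of $\overline{G_2}$ (axiom (3)). No topological input is required; this is purely algebraic.
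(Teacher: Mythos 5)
Your proof is correct and follows essentially the same route as the paper: apply condition (2) of the rough homomorphism definition to $e_{1}\ast_{1}g$ and to $g^{-1}\ast_{1}g$, then identify $\varphi(e_{1})$ and $\varphi(g^{-1})$ using the rough group axioms. The only difference is cosmetic: the paper derives $y=y\ast_{2}\varphi(e_{1})$ for every $y\in G_{2}$ via surjectivity and then asserts $\varphi(e_{1})=e_{2}$, whereas you fix a single $g$ and carry out the cancellation by $\varphi(g)^{-1}$ explicitly (and likewise spell out uniqueness of rough inverses), which actually supplies the detail the paper leaves implicit.
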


\begin{proof}
We prove that $\varphi(e_{1})=e_{2}$. Indeed, take an arbitrary $y\in G_{2}$, since $\varphi|_{G_{1}}$ is a surjection mapping, there exists a $x\in G_{1}$ such that $\varphi(x)=y$. Then it follows from (2) of the definition of rough homomorphism that $$y=\varphi(x)=\varphi(x\ast _{1}e_{1})=\varphi(x)\ast_{2} \varphi(e_{1})=y\ast_{2} \varphi(e_{1}).$$ By the arbitrary taking of $y$, it follows that $\varphi(e_{1})=e_{2}$.

For any $g\in G_{1}$, since $\varphi(e_{1})=\varphi(g^{-1}\ast_{1} g)=\varphi(g^{-1})\ast_{2} \varphi(g)=e_{2}$ and $\varphi(G_{1})=G_{2}$, we have $\varphi(g^{-1})=\varphi(g)^{-1}$.
\end{proof}

\begin{definition}
Let $G_{1}\subset U_{1}$ and $G_{2}\subset U_{2}$ be rough groups that are rough homomorphism. Then the set $\{x| \varphi(x)=e_{2}, x\in G_{1}\}$ is called {\it rough homomorphism kernel}, denoted by $\mbox{ker}(\varphi)$, where $\varphi$ is the rough homomorphism.
\end{definition}

\begin{remark}
 From the definition above, it maybe the rough homomorphism kernel is empty. Indeed, if a topological rough group does not contain the rough identity element, then the rough kernel of any identity mapping from $\overline{G}$ onto itself is empty.
 \end{remark}

Let $G$ be a topological rough group and let $H$ be a rough subgroup of $G$. We say that $H$ is {\it weakly rough normal} if $(g\ast H\ast g^{-1})\cap G\subset H$ for each $g\in G$.

\begin{proposition}
Let $\varphi$ be a rough homomorphism between rough groups $G_{1}$ and $G_{2}$. Then rough homomorphism kernel $\mbox{ker}(\varphi)$ is a weakly rough normal rough subgroup of $G_{1}$.
\end{proposition}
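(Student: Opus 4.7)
The plan is to verify, in order, that $\mbox{ker}(\varphi)$ is non-empty, closed under rough inverses, closed under binary products into its own upper approximation (thereby qualifying as a rough subgroup via the characterization recalled earlier), and finally weakly rough normal. Non-emptiness is automatic once $e_{1}\in G_{1}$, for then Proposition~\ref{p1} gives $\varphi(e_{1})=e_{2}$, so $e_{1}\in\mbox{ker}(\varphi)$; the theorem is implicitly taken in this regime (otherwise, as the Remark after the kernel definition warns, $\mbox{ker}(\varphi)$ can be empty and the statement is vacuous).

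For closure under inverses, given $x\in\mbox{ker}(\varphi)$, Proposition~\ref{p1} directly yields $\varphi(x^{-1})=\varphi(x)^{-1}=e_{2}^{-1}=e_{2}$, hence $x^{-1}\in\mbox{ker}(\varphi)$. For the product clause, given $x,y\in\mbox{ker}(\varphi)\subset G_{1}$, condition~(2) of the rough-homomorphism definition applies to the binary pair $(x,y)$ and gives $\varphi(x\ast_{1}y)=\varphi(x)\ast_{2}\varphi(y)=e_{2}$. Since $\varphi(\mbox{ker}(\varphi))=\{e_{2}\}$ and $e_{2}\in\overline{\{e_{2}\}}$, condition~(3) specializes to $\overline{\mbox{ker}(\varphi)}=\varphi^{-1}(\overline{\{e_{2}\}})$, and therefore $xy\in\overline{\mbox{ker}(\varphi)}$. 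With inverse closure in hand, this identifies $\mbox{ker}(\varphi)$ as a rough subgroup of $G_{1}$.

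The substantive step is weak rough normality. Fix $g\in G_{1}$ and $h\in\mbox{ker}(\varphi)$, and assume $u:=g\ast_{1} h\ast_{1} g^{-1}\in G_{1}$; the goal is $\varphi(u)=e_{2}$. Condition~(2) only affords multiplicativity of $\varphi$ on binary products whose factors lie in $G_{1}\cup\{e_{1}\}$, so one cannot directly expand $\varphi(ghg^{-1})$ as a threefold product: the intermediate factor $hg^{-1}$ (or $gh$) may lie only in $\overline{G_{1}}\setminus G_{1}$, outside the admissible range of~(2). The device is to rewrite the conjugation identity as the binary equation $ug=gh$ in $\overline{G_{1}}$, which follows from $u=ghg^{-1}$ by associativity in $\overline{G_{1}}$ together with $g^{-1}g=e_{1}$. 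Both sides are now binary products of elements of $G_{1}$, so applying~(2) on each side and using $\varphi(h)=e_{2}$ gives $\varphi(u)\ast_{2}\varphi(g)=\varphi(g)\ast_{2}\varphi(h)=\varphi(g)$. Right-multiplying by $\varphi(g)^{-1}\in G_{2}$ and invoking associativity in $\overline{G_{2}}$ then yields $\varphi(u)=e_{2}$, so $u\in\mbox{ker}(\varphi)$, which is exactly weak rough normality.

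The principal obstacle is precisely this last calculation: the multiplicativity hypothesis~(2) is deliberately weak, and a naive threefold expansion of $\varphi(ghg^{-1})$ would leave its range of applicability. Reducing the conjugation identity to the binary equation $ug=gh$ is the trick that keeps every application of~(2) and every cancellation inside the admissible range; everything else in the proof is a direct appeal to Proposition~\ref{p1} or to conditions (2) and (3) of the rough-homomorphism definition.
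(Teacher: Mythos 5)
Your proposal is correct and follows essentially the same route as the paper's proof: symmetry via Proposition~\ref{p1}, the product clause via condition (3) applied to $\varphi(\mbox{ker}(\varphi))=\{e_{2}\}$, and weak normality by rewriting $x=g\ast_{1}n\ast_{1}g^{-1}$ as the binary equation $x\ast_{1}g=g\ast_{1}n$ before applying condition (2) and cancelling $\varphi(g)$. Your added observation that non-emptiness of the kernel implicitly requires $e_{1}\in G_{1}$ is a fair point the paper glosses over, but it does not change the argument.
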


\begin{proof}
Let $N=\mbox{ker}(\varphi)$. We first prove that $N$ is a rough subgroup of $G_{1}$. Obviously, $N$ is symmetric by Proposition~\ref{p1}. Now we prove that $N\ast_{1} N\subset \overline{N}$. Take any $x, y\in N$. Then $\varphi(x\ast_{1} y)=\varphi(x)\ast_{2} \varphi(y)=e_{2}\ast_{2}e_{2}=e_{2}\in\overline{\{e_{2}\}}$. Hence $x\ast_{1} y\in \varphi^{-1}(\overline{\{e_{2}\}})=\overline{N}$. Thus, $N\ast_{1} N\subset \overline{N}$. Then $N$ is a rough group.

Next we prove that $N$ is weakly rough normal. Take any $g\in G_{1}$. Pick an arbitrary $x\in (g\ast_{1} N\ast_{1} g^{-1})\cap G_{1}$. We prove that $x\in N$. Indeed, since $x\in (g\ast_{1} N\ast_{1} g^{-1})\cap G_{1}$, there exists $n\in N$ such that $x=g\ast_{1} n\ast_{1} g^{-1}$. Then $x\ast_{1} g=x\ast_{1} n$, thus $\varphi(x)\ast_{2} \varphi(g)=\varphi(x\ast_{1} g)=\varphi(g\ast_{1} n)=\varphi(g)\ast_{2} \varphi(n)=\varphi(g)\ast_{2} e_{2}=\varphi(g)$,  which implies $\varphi(x)=e$. Therefore, $x\in N$ since $x\in G_{1}$.
\end{proof}

\begin{proposition}
Let $\varphi$ be a rough homomorphism between rough groups $G_{1}$ and $G_{2}$. Then $\varphi((\mbox{ker}(\varphi))^{2})=\{e_{2}\}$.
\end{proposition}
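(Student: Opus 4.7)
The proof is essentially a direct unwinding of the definitions, with no hidden subtlety. I would set $N=\ker(\varphi)$ and take an arbitrary element $z\in N\ast_{1}N=N^{2}$, writing $z=x\ast_{1}y$ for some $x,y\in N$. The key observation to record up front is that $N\subset G_{1}\subset G_{1}\cup\{e_{1}\}$, so condition (2) of the definition of rough homomorphism applies directly to the pair $(x,y)$ and gives $\varphi(z)=\varphi(x)\ast_{2}\varphi(y)$.

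I would then plug in the defining property of the kernel: $\varphi(x)=\varphi(y)=e_{2}$, whence $\varphi(z)=e_{2}\ast_{2}e_{2}$. Since $e_{2}\in\overline{G_{2}}$ is the rough identity of $G_{2}$, axiom (3) in the definition of rough group forces $e_{2}\ast_{2}e_{2}=e_{2}$, and thus $\varphi(z)=e_{2}$. Since $z\in N^{2}$ was arbitrary, this proves $\varphi(N^{2})\subseteq\{e_{2}\}$. For the reverse inclusion, assuming $N\neq\emptyset$ (the preceding remark explicitly warns that the rough kernel can be empty in this setting), pick any $x\in N$; then $x\ast_{1}x\in N^{2}$ and by the same computation $\varphi(x\ast_{1}x)=e_{2}$, so $e_{2}\in\varphi(N^{2})$.

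The only point at which one must be careful is the applicability of condition (2): the product $x\ast_{1}y$ a priori lives in $\overline{G_{1}}$ rather than in $G_{1}$, so one cannot naively iterate the homomorphism identity on longer products, but for a single product of two elements of $G_{1}$ the hypothesis of (2) is met verbatim. There is therefore no real obstacle in this proof—it is a one-line check once the ingredients from the definition of rough homomorphism and the rough identity axiom are in hand.
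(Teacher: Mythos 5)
Your proof is correct and follows essentially the same one-line computation as the paper: for $x,y\in\ker(\varphi)$, condition (2) of the definition gives $\varphi(x\ast_{1}y)=\varphi(x)\ast_{2}\varphi(y)=e_{2}\ast_{2}e_{2}=e_{2}$. Your additional care about the reverse inclusion (requiring $\ker(\varphi)\neq\emptyset$) is a reasonable touch the paper omits, but the argument is the same.
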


\begin{proof}
Take any $x, y\in \mbox{ker}(\varphi)$. Since $\varphi$ is a rough homomorphism, $\varphi(xy)=\varphi(x)\varphi(y)=e_{2}e_{2}=e_{2}.$
\end{proof}

\begin{proposition}
Let $\varphi$ be a rough homomorphism between rough groups $G_{1}$ and $G_{2}$. If $H$ is a (normal) rough subgroup of $G_{1}$, then $\varphi(H)$ is a (normal) rough subgroup of $G_{2}$.
\end{proposition}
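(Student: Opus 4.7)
The plan is to verify directly the three defining properties of a (normal) rough subgroup for the image $\varphi(H)$, using the three clauses of the rough homomorphism definition plus Proposition~\ref{p1}.

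First I would check that $\varphi(H)$ is symmetric. If $y\in\varphi(H)$, write $y=\varphi(h)$ with $h\in H$. By Proposition~\ref{p1}, $y^{-1}=\varphi(h)^{-1}=\varphi(h^{-1})$, and $h^{-1}\in H$ since $H$ is symmetric; hence $y^{-1}\in\varphi(H)$.

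Next, I would prove the closure condition $\varphi(H)\ast_{2}\varphi(H)\subset\overline{\varphi(H)}$. Pick $y_{1}=\varphi(h_{1})$, $y_{2}=\varphi(h_{2})$ with $h_{1},h_{2}\in H\subset G_{1}$. Clause (2) of the rough homomorphism definition applies (both arguments are in $G_{1}$), giving $y_{1}\ast_{2}y_{2}=\varphi(h_{1}\ast_{1}h_{2})$. Since $H$ is a rough subgroup, $h_{1}\ast_{1}h_{2}\in\overline{H}$. Now invoking clause (3) of the definition, $\overline{H}=\varphi^{-1}(\overline{\varphi(H)})$, so $\varphi(h_{1}\ast_{1}h_{2})\in\overline{\varphi(H)}$. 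Combined with Theorem~1.3, $\varphi(H)$ is a rough subgroup of $G_{2}$.

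Finally, I would address normality. Assume $gH=Hg$ for every $g\in G_{1}$, and fix $g'\in G_{2}$. Since $\varphi|_{G_{1}}\colon G_{1}\to G_{2}$ is surjective, choose $g\in G_{1}$ with $\varphi(g)=g'$. For any $h\in H$, the normality of $H$ provides $h'\in H$ with $g\ast_{1}h=h'\ast_{1}g$; applying $\varphi$ and invoking clause (2) again (all elements lie in $G_{1}$) gives $g'\ast_{2}\varphi(h)=\varphi(h')\ast_{2}g'\in\varphi(H)\ast_{2}g'$. Hence $g'\varphi(H)\subset\varphi(H)g'$, and the reverse inclusion follows by the symmetric argument starting from $h\ast_{1}g=g\ast_{1}h''$.

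The main subtlety to watch is that multiplicativity in clause (2) is only guaranteed when both arguments lie in $G_{1}\cup\{e\}$; fortunately, every product I need to push through $\varphi$ is of the form (element of $H$) $\ast_{1}$ (element of $H$) or (element of $G_{1}$) $\ast_{1}$ (element of $H$), so the hypothesis is always met. The other delicate point is that the closure step crucially uses clause (3) (not just continuity-style reasoning): without the identity $\overline{H}=\varphi^{-1}(\overline{\varphi(H)})$, one could not conclude that the image of $\overline{H}$ lands inside $\overline{\varphi(H)}$.
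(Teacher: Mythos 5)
Your proposal is correct and follows essentially the same route as the paper: symmetry via Proposition~\ref{p1}, the closure condition via $H\ast_{1}H\subset\overline{H}=\varphi^{-1}(\overline{\varphi(H)})$, and normality by pulling back $g'\in G_{2}$ to $g\in G_{1}$ and pushing $gH=Hg$ through $\varphi$. Your elementwise treatment of the normality step is slightly more careful than the paper's set-level computation, but the argument is the same.
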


\begin{proof}
Assume that $H$ is a rough subgroup of $G_{1}$. Then it is obvious that $\varphi(H)$ is symmetric by Proposition~\ref{p1}. We need to prove that $\varphi(H)\ast_{2} \varphi(H)\subset\overline{\varphi(H)}$, that is, $\varphi(H\ast_{1} H)\subset\overline{\varphi(H)}$. Indeed, since $H$ is rough subgroup of $G_{1}$, it follows that $H\ast_{1} H\subset \overline{H}=\varphi^{-1}(\overline{\varphi(H)})$, thus $\varphi(H\ast_{1} H)\subset\overline{\varphi(H)}$.

If $H$ is normal, then $g\ast_{1} H=H\ast_{1} g$ for each $g\in G_{1}$. Take any $h\in G_{2}$. Since $\varphi|_{G_{1}}$ is onto, there exists $g\in G_{1}$ such that $\varphi(g)=h$. Then $$h\ast_{2} \varphi(H)=\varphi(g)\ast_{2} \varphi(H)=\varphi(g\ast_{2}H)=\varphi(H\ast_{2}g)=\varphi(H)\ast_{2}\varphi(g)=\varphi(H)\ast_{2}h.$$Hence $H$ is normal.
\end{proof}

\begin{proposition}
Let $\varphi$ be a rough homomorphism between rough groups $G_{1}$ and $G_{2}$. If $H$ is a (weakly rough normal) rough subgroup of $G_{2}$, then $\varphi^{-1}(H)\cap G_{1}$ is a (weakly rough normal) rough subgroup of $G_{1}$.
\end{proposition}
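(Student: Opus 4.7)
The plan is to set $K=\varphi^{-1}(H)\cap G_1$ and verify the criterion for a rough subgroup (symmetric, with $K\ast_1 K\subset\overline{K}$), then handle the weak rough normality claim separately; every step is carried out by pushing the question through $\varphi$ using Proposition~\ref{p1}, the surjectivity of $\varphi|_{G_1}$, and conditions (2) and (3) in the definition of rough homomorphism. First I would check symmetry: if $x\in K$, then $x^{-1}\in G_1$, and by Proposition~\ref{p1}, $\varphi(x^{-1})=\varphi(x)^{-1}\in H^{-1}=H$, so $x^{-1}\in K$. Next I would observe that $\varphi(K)=H$; the inclusion $\varphi(K)\subset H$ is trivial, while for $h\in H\subset G_2$, the surjectivity of $\varphi|_{G_1}$ supplies some $x\in G_1$ with $\varphi(x)=h$, hence $x\in K$. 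Applying condition (3) to $K\subset G_1$ then yields $\overline{K}=\varphi^{-1}(\overline{\varphi(K)})=\varphi^{-1}(\overline{H})$. Finally, for $x,y\in K\subset G_1$, condition (2) gives $\varphi(x\ast_1 y)=\varphi(x)\ast_2\varphi(y)\in H\ast_2 H\subset\overline{H}$ (because $H$ is itself a rough subgroup of $G_2$), so $x\ast_1 y\in\varphi^{-1}(\overline{H})=\overline{K}$.

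For the weak rough normality part, assume $H$ is weakly rough normal in $G_2$. Take $g\in G_1$ and $x\in(g\ast_1 K\ast_1 g^{-1})\cap G_1$, so $x=g\ast_1 k\ast_1 g^{-1}$ for some $k\in K$, equivalently $x\ast_1 g=g\ast_1 k$. Each side is a product of two elements of $G_1$, so condition (2) applies to each pair and produces $\varphi(x)\ast_2\varphi(g)=\varphi(g)\ast_2\varphi(k)$. Since the images all lie in $G_2\subset\overline{G_2}$, where the operation is associative and each element of $G_2$ is invertible, right-multiplying by $\varphi(g)^{-1}$ yields $\varphi(x)=\varphi(g)\ast_2\varphi(k)\ast_2\varphi(g)^{-1}\in(\varphi(g)\ast_2 H\ast_2\varphi(g)^{-1})\cap G_2\subset H$, where the last inclusion is exactly the weak rough normality of $H$ in $G_2$. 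Hence $x\in\varphi^{-1}(H)\cap G_1=K$.

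The one delicate point — and the step I expect to be the only real obstacle — is that condition (2) in the definition of rough homomorphism guarantees $\varphi(a\ast_1 b)=\varphi(a)\ast_2\varphi(b)$ only for $a,b\in G_1\cup\{e_1\}$, so one cannot naively distribute $\varphi$ over the triple product $g\ast_1 k\ast_1 g^{-1}$: the intermediate product $g\ast_1 k$ may leave $G_1$. Rewriting the defining equation of $x$ as $x\ast_1 g=g\ast_1 k$ keeps every multiplication between two elements of $G_1$, after which condition (2) applies cleanly and the remaining manipulations are routine.
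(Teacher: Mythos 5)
Your proof is correct and follows essentially the same route as the paper's: symmetry via Proposition~\ref{p1}, the identification $\overline{\varphi^{-1}(H)\cap G_{1}}=\varphi^{-1}(\overline{H})$ from condition (3) together with $\varphi(\varphi^{-1}(H)\cap G_{1})=H$, and, for weak normality, the same rewriting of $x=g\ast_{1}k\ast_{1}g^{-1}$ as $x\ast_{1}g=g\ast_{1}k$ so that condition (2) applies only to pairs in $G_{1}$. Your explicit remark about why $\varphi$ cannot be distributed over the triple product is exactly the point the paper handles implicitly by that same rewriting.
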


\begin{proof}
Assume that $H$ is a rough subgroup of $G_{2}$, then it is obvious that $\varphi^{-1}(H)\cap G_{1}$ is symmetric. Next we prove that $(\varphi^{-1}(H)\cap G_{1})\ast_{1} (\varphi^{-1}(H)\cap G_{1})\subset \overline{\varphi^{-1}(H)\cap G_{1}}$. Clearly, $\varphi((\varphi^{-1}(H)\cap G_{1})\ast_{1} (\varphi^{-1}(H)\cap G_{1}))=H\ast_{2} H\subset \overline{H}$, hence $(\varphi^{-1}(H)\cap G_{1})\ast_{1} (\varphi^{-1}(H)\cap G_{1})\subset\varphi^{-1}(\overline{H})$. Moreover, $\overline{\varphi^{-1}(H)\cap G_{1}}=\varphi^{-1}(\overline{\varphi(\varphi^{-1}(H)\cap G_{1})})=\varphi^{-1}(\overline{H})$. Therefore, $(\varphi^{-1}(H)\cap G_{1})\ast_{1} (\varphi^{-1}(H)\cap G_{1})\subset \overline{\varphi^{-1}(H)\cap G_{1}}$. Thus $\varphi^{-1}(H)\cap G_{1}$ is a rough subgroup of $G_{1}$.

If $H$ is weakly rough normal in $G_{2}$, then we claim that $\varphi^{-1}(H)\cap G_{1}$ is weakly rough normal in $G_{1}$. Indeed, take any $g\in G_{1}$. We prove that $(g\ast_{1} (\varphi^{-1}(H)\cap G_{1})g^{-1})\cap G_{1}\subset\varphi^{-1}(H)\cap G_{1}$. Let $x\in (g\ast_{1} (\varphi^{-1}(H)\cap G_{1})g^{-1})\cap G_{1}$. Then there exists $g_{1}\in\varphi^{-1}(H)\cap G_{1}$ such that $x=g\ast_{1} g_{1}\ast g^{-1}\in G_{1}$. Then $x\ast_{1} g=g\ast_{1} g_{1}$. Hence $\varphi(x)\ast_{2} \varphi(g)=\varphi(x\ast_{1} g)=\varphi(g\ast_{1} g_{1})=\varphi(g)\ast_{2} \varphi(g_{1})$, then $\varphi(x)=\varphi(g)\ast_{2} \varphi(g_{1})\ast_{2}\varphi(g)^{-1}\in G_{2}$ since $x\in G_{1}$. Hence $\varphi(g)\ast_{2} \varphi(g_{1})\ast_{2}\varphi(g)^{-1}\in (\varphi(g)\ast_{2} H\ast_{2}\varphi(g)^{-1})\cap G_{2}\subset H$, then $x\in \varphi^{-1}(H)\cap G_{1}$.
\end{proof}

\begin{proposition}
Let $\varphi$ be a rough homomorphism between rough groups $G_{1}$ and $G_{2}$. If $G_{1}$ is a topological group, then $G_{2}$ is also a topological group.
\end{proposition}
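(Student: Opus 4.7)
The plan is to reduce the statement to Remark~(1) following the definition of strongly topological rough group, which asserts that any topological rough group $G$ satisfying $G=G^{2}$ is automatically a topological group. Thus the entire task is to verify $G_{2}=G_{2}^{2}$; continuity of the product and of the inversion comes for free from $G_{2}$ already being a topological rough group.

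First I would show $e_{2}\in G_{2}$. Since $G_{1}$ is a topological group, $e_{1}\in G_{1}$, and Proposition~\ref{p1} gives $\varphi(e_{1})=e_{2}$, so $e_{2}\in \varphi(G_{1})=G_{2}$. This immediately yields the inclusion $G_{2}\subseteq G_{2}^{2}$ via the factorization $y=y\ast_{2}e_{2}$ for any $y\in G_{2}$.

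For the reverse inclusion $G_{2}^{2}\subseteq G_{2}$, I would take $y_{1},y_{2}\in G_{2}$ and lift them through the surjection $\varphi|_{G_{1}}$ to elements $x_{1},x_{2}\in G_{1}$. Because $G_{1}$ is an ordinary group, $x_{1}\ast_{1}x_{2}\in G_{1}$, and the homomorphism identity (which is valid on $G_{1}\cup\{e_{1}\}$) produces
$$ y_{1}\ast_{2}y_{2}=\varphi(x_{1})\ast_{2}\varphi(x_{2})=\varphi(x_{1}\ast_{1}x_{2})\in\varphi(G_{1})=G_{2}. $$
Combining both inclusions yields $G_{2}=G_{2}^{2}$, and the remark quoted above then delivers that $G_{2}$ is a topological group.

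I do not expect any genuine obstacle: the only subtlety is that the multiplicativity of $\varphi$ is guaranteed only for arguments drawn from $G_{1}\cup\{e_{1}\}$, but that is exactly the range in which we invoke it. Similarly, if one wanted to verify closure under inverses directly (rather than extracting it from $G_{2}=G_{2}^{2}$), one could apply Proposition~\ref{p1} to any $y=\varphi(x)\in G_{2}$ to obtain $y^{-1}=\varphi(x^{-1})\in G_{2}$; but this is not strictly needed once the remark is invoked.
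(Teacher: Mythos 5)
Your proposal is correct and follows essentially the same route as the paper: reduce to showing $G_{2}=G_{2}^{2}$ (so that the remark on topological rough groups with $G=G^{2}$ applies), then obtain $G_{2}^{2}\subseteq G_{2}$ by lifting elements through the surjection $\varphi|_{G_{1}}$ and using the homomorphism identity together with closure of $G_{1}$ under multiplication. Your explicit verification of the reverse inclusion $G_{2}\subseteq G_{2}^{2}$ via $e_{2}\in G_{2}$ is a small point the paper leaves implicit, but the argument is the same.
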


\begin{proof}
It suffices to prove that $G_{2}^{2}=G_{2}$. Take arbitrary $x, y\in G_{2}$. Then there exists $g, h\in G_{1}$ such that $\varphi(g)=x, \varphi(h)=y$, hence $\varphi(gh)=\varphi(g)\varphi(h)=xy$. Since $G_{1}$ is a topological group, it follows that $gh\in G_{1}$, thus $\varphi(gh)\in G_{2}$ by the definition of $\varphi$, that is, $xy\in G_{2}$. Therefore, $G_{2}$ is a topological group.
\end{proof}

Finally we prove the version of open mapping theorem for topological rough groups. We recall some concepts.

Let $X$ be a space. We say that $X$ is {\it locally compact} if for each point $x$ of $X$ there exists an open neighborhood $V$ of $x$ such that $V^{c}$ is a compact set. $X$ is a {\it $\sigma$-compact space} if it is the union of countably many compact subsets of $X$.

\begin{theorem}
Let $G_{1}$ and $G_{2}$ be locally compact Hausdorff topological rough groups and let $f$ be a
continuous rough homomorphism of $G_{1}$ onto $G_{2}$. If $G_{1}$ and $G_{2}$ satisfy the following conditions, then $f$ is open.

\smallskip
(1) $e_{1}\in G_{1}$ and $e_{2}\in G_{2}$;

\smallskip
(2) $G_{1}$ and $G_{2}$ are open in $\overline{G_{1}}$ and $\overline{G_{2}}$ respectively;

\smallskip
(3) $G_{1}$ is $\sigma$-compact.
\end{theorem}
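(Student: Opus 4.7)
The plan is to adapt the classical Baire-category proof of the open mapping theorem to the rough-group setting. I will first reduce to showing that $f(U)$ contains an open neighborhood of $e_2$ for every open neighborhood $U$ of $e_1$ in $G_1$. This reduction uses Proposition~\ref{p3} applied in both $G_1$ and $G_2$—every point has a neighborhood base consisting of left translates of symmetric neighborhoods of the identity—together with the homomorphism identity $f(gx)=f(g)f(x)$, which transports a neighborhood of $e_2$ inside $f(U)$ to a neighborhood of $f(x)$ inside $f(xU)$.

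Given such $U$, I combine local compactness of $G_1$ with Theorem~\ref{t5} and Proposition~\ref{p4} to choose a symmetric open neighborhood $V$ of $e_1$ whose closure $\overline{V}$ in $G_1$ is compact and satisfies $\overline{V}^2\subset U$. Explicitly, start with a precompact open neighborhood $V_0$ of $e_1$, use Theorem~\ref{t5}(i),(ii),(v) to refine to a symmetric open $V_1\subset V_0\cap U$ with $V_1^2\subset U$, then take a symmetric open $V$ with $V^2\subset V_1$. Proposition~\ref{p4} yields $\overline{V}\subset V\cdot V=V^2\subset V_1$, so $\overline{V}^2\subset V_1^2\subset U$, and $\overline{V}\subset \overline{V_0}$ is compact. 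Next, using $\sigma$-compactness of $G_1$ together with the open cover $\{gV\cap G_1:g\in G_1\}$ (open by Proposition~\ref{p3}), extract a countable $\{h_n\}_{n\in\mathbb{N}}\subset G_1$ with $G_1\subset\bigcup_n h_n V$. Applying $f$ and the homomorphism property,
\[
G_2=f(G_1)\subset\bigcup_n D_n,\qquad D_n:=f(h_n)f(\overline{V})\cap G_2.
\]

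Since $G_2$ is locally compact Hausdorff, it is a Baire space, so at least one closure $\overline{D_{k_0}}^{G_2}$ has nonempty interior; pick a nonempty open $W\subset G_2$ with $W\subset\overline{D_{k_0}}^{G_2}$. Because $W$ is open, $W\cap D_{k_0}\neq\emptyset$; fix $y\in W\cap D_{k_0}$ and write $y=f(h_{k_0})f(v_0)$ with $v_0\in\overline{V}$. A direct calculation using associativity and the inverse formula $y^{-1}=f(v_0)^{-1}f(h_{k_0})^{-1}$ gives
\[
y^{-1}D_{k_0}\subset f(v_0)^{-1}f(\overline{V})\subset f(\overline{V}^{-1}\overline{V})=f(\overline{V}^2)\subset f(U),
\]
and $f(\overline{V}^2)$ is compact (continuous image of the compact set $\overline{V}^2\subset G_1$) and contained in $G_2$, hence closed in the Hausdorff space $G_2$.

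To finish, note that $L_y^{-1}(W)=\{z\in G_2:yz\in W\}$ is open in $G_2$ and contains $e_2$, and for $z\in L_y^{-1}(W)$ we have $z=y^{-1}(yz)\in y^{-1}W\cap G_2$. Continuity of $L_{y^{-1}}:G_2\to\overline{G_2}$ yields $L_{y^{-1}}(\overline{D_{k_0}}^{G_2})\subset\overline{y^{-1}D_{k_0}}^{\overline{G_2}}\subset\overline{f(\overline{V}^2)}^{\overline{G_2}}$; intersecting with $G_2$ and invoking the closedness of $f(\overline{V}^2)$ in $G_2$ collapses this to $y^{-1}W\cap G_2\subset f(\overline{V}^2)\subset f(U)$. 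Hence $L_y^{-1}(W)$ is an open neighborhood of $e_2$ contained in $f(U)$, completing the reduction. The main obstacle I expect is the bookkeeping of closures: because $\overline{G_2}$ is not assumed Hausdorff, one must carefully exploit openness of $G_2$ in $\overline{G_2}$ together with compactness (and hence closedness in $G_2$) of $f(\overline{V}^2)$ to transfer estimates safely between the two topologies.
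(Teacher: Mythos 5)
Your argument is correct and follows the same skeleton as the paper's proof: reduce via Proposition~\ref{p3} and the homomorphism identity to showing that $f(U)$ is a neighborhood of $e_{2}$ for every open neighborhood $U$ of $e_{1}$; use local compactness together with Proposition~\ref{p4} (and Theorem~\ref{t5}) to produce a symmetric open $V$ whose closure $V^{c}$ in $G_{1}$ is compact with $V^{c}\ast_{1}V^{c}\subset U$; cover $G_{1}$ by countably many translates $g_{n}V$ using $\sigma$-compactness; run a Baire-category argument in $G_{2}$; and translate the resulting open set back to $e_{2}$. The one genuine point of divergence is the Baire step. The paper asserts that each $h_{n}\ast_{2}f(V^{c})$ is closed in $\overline{G_{2}}$ because it is compact, which is not justified since only $G_{2}$, not $\overline{G_{2}}$, is assumed Hausdorff, and then passes through Proposition~\ref{p8} to get a nonempty interior for $f(V^{c})$. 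You instead apply the Baire property to the closures of the sets $D_{n}$ taken inside the Hausdorff space $G_{2}$, and recover the needed containment by translating the open set $W$ by $y^{-1}$ and using that $f((V^{c})^{2})$ is a compact, hence closed, subset of $G_{2}$; this bypasses Proposition~\ref{p8} and in fact repairs the small gap in the published argument. The remaining manipulations (the four-factor associativity computation for $y^{-1}D_{k_{0}}$, the identity $f(x^{-1})=f(x)^{-1}$ from Proposition~\ref{p1}, and the observation that $L_{y}^{-1}(W)$ is an open neighborhood of $e_{2}$ contained in $f(U)$) are the same in substance as the paper's concluding computation $e_{2}\in w^{-1}\ast_{2}W\subset f(U)$.
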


\begin{proof}
By Proposition~\ref{p3} and the assumptions of (1) and (2), it suffices to prove that $f(U)$ is a neighborhood of $e_{2}$ in $G_{2}$ for each open symmetric neighborhood $U$ of $e_{1}$ in $G_{1}$.  Let $U$ be an open neighborhood of $e_{1}$ in $G_{1}$. Form Proposition~\ref{p4} and local compactness of $G_{1}$, there exists an open symmetric neighborhood $V$ of $e_{1}$ in $G_{1}$ such that $V^{c}\ast_{1} V^{c}\subset U$ and $V^{c}$ is compact. Clearly, $G_{1}=G_{1}\cap\bigcup_{g\in G_{1}} g\ast_{1} V$ and $V^{c}$ is symmetric. Since $G_{1}$ is $\sigma$-compact, there exists a countable subset $\{g_{n}: n\in\mathbb{N}\}\subset G_{1}$ such that $G_{1}=G_{1}\cap\bigcup_{n\in\mathbb{N}} g_{n}\ast_{1} V$. For each $n\in\mathbb{N}$, let $h_{n}=f(g_{n})$. Then $$G_{2}=G_{2}\cap\bigcup_{n\in\mathbb{N}} f(g_{n}\ast_{1} V^{c})=G_{2}\cap\bigcup_{n\in\mathbb{N}}f(g_{n})\ast_{2} f(V^{c})=G_{2}\cap\bigcup_{n\in\mathbb{N}}h_{n}\ast_{2} f(V^{c})$$ because $f$ is a surjective rough homomorphism. Moreover, $f(V^{c})$ is compact in $G_{2}$ since $V^{c}$ is compact, then it follows that each $h_{n}\ast_{2} f(V^{c})$ is compact in $\overline{G_{2}}$ since the multiplication of $G_{2}$ is continuous, hence each $h_{n}\ast_{2} f(V^{c})$ is closed in $\overline{G_{2}}$, thus each $(h_{n}\ast_{2} f(V^{c}))\cap G_{2}$ in closed in $G_{2}$. By the local compactness of $G_{2}$, there exists $n\in\mathbb{N}$ such that the interior of $(h_{n}\ast_{2} f(V^{c}))\cap G_{2}$ is non-empty in $G_{2}$. By Proposition~\ref{p8}, the interior of $f(V^{c})$ in $G_{2}$ is non-empty. So there exists a non-empty open subset $W$ of $G_{2}$ such that $W\subset f(V^{c})$. Take an arbitrary $w\in W$. Then there exists a point $v\in V^{c}$ such that $f(v)=w$. Therefore, $$e_{2}\in w^{-1}\ast_{2}W\subset w^{-1}\ast_{2}f(V^{c})=f(v^{-1})\ast_{2}f(V^{c})\subset f(v\ast_{1}V^{c})\subset f(V^{c}\ast_{1}V^{c})\subset f(U),$$thus $f(U)$ is a neighborhood of $e_{2}$ in $G_{2}$.
\end{proof}

\end{document}